\newcommand{\CC}{\mathbb{C}}
\newcommand{\QQ}{\mathbb{Q}}
\newcommand{\ZZ}{\mathbb{Z}}
\newcommand{\PP}{\mathbb{P}}
\newcommand{\ff}{\mathfrak{f}}
\newcommand{\XX}{\mathfrak{X}}
\newcommand{\XD}{\mathfrak{D}}
\newcommand{\OOO}{{\mathscr{O}}} 
\newcommand{\mumu}{{\boldsymbol{\mu}}}
\newcommand{\Aut}{\operatorname{Aut}}
\newcommand{\Spec}{\operatorname{Spec}}
\newcommand{\Tot}{\operatorname{Tot}}
\newcommand{\LLL}{\mathscr{L}}
\newcommand{\rT}{\operatorname{T}}
\newcommand{\rA}{\operatorname{A}}
\newcommand{\mult}{\operatorname{mult}}
\newcommand{\Eu}{\operatorname{Eu}}
\newcommand{\Diff}{\operatorname{Diff}}
\newcommand{\Sing}{\operatorname{Sing}}
\newcommand{\Supp}{\operatorname{Supp}}
\newcommand{\Pic}{\operatorname{Pic}}
\newcommand{\bb}{\operatorname{b}}
\newcommand{\lcm}{\operatorname{lcm}}
\newcommand{\K}{\operatorname{K}}
\renewcommand{\emptyset}{\varnothing}
\newcommand{\xref}[1]{\textup{\ref{#1}}}
\newcommand{\type}[1]{$\mathrm{#1}$}
\theoremstyle{plain}
\newtheorem{theorem}[subsection]{Theorem}
\newtheorem{lemma}[subsection]{Lemma}
\newtheorem{proposition}[subsection]{Proposition}
\newtheorem{scorollary}[equation]{Corollary}
\newtheorem*{claim*}{Claim}
\newtheorem{claim}[subsection]{Claim}
\newtheorem{slemma}[equation]{Lemma}
\newtheorem{sproposition}[equation]{Proposition}
\newtheorem{stheorem}[equation]{Theorem}
\theoremstyle{definition}
\newtheorem{definition}[subsection]{Definition}
\newtheorem{case}[subsection]{}
\newtheorem{notation}[subsection]{Notation}
\newtheorem{assumption}[subsection]{Assumption}
\newtheorem{sremark}[equation]{Remark}
\newtheorem{construction}[subsection]{Construction}
\newtheorem{example}[subsection]{Example}
\newtheorem{sexample}[equation]{Example}
\newtheorem{scase}[equation]{}
\newcounter{NN}
\renewcommand{\theNN}{{\rm\arabic{NN}${}^o_{}$}}
\def\nr{\refstepcounter{NN}{\theNN}}%
\newcounter{NNN}
\renewcommand{\theNNN}{{\rm\arabic{NNN}${}^{\#}_{}$}}
\def\nrr{\refstepcounter{NNN}{\theNNN}}%
\newcounter{NNNN}
\renewcommand{\theNNNN}{{\rm\arabic{NNNN}${}^{\bullet}_{}$}}
\def\nrrr{\refstepcounter{NNNN}{\theNNNN}}%
\renewcommand\labelenumi{{\rm (\roman{enumi})}}
\renewcommand\theenumi{{\rm (\roman{enumi})}}
\author{Yuri Prokhorov}
\title[Log canonical degenerations]{Log canonical degenerations of del Pezzo surfaces in $\QQ$-Gorenstein families}
\address{\noindent
Steklov Mathematical Institute, Russia
\newline\indent
Moscow State Lomonosov 
University, Russia
\newline\indent
National Research University Higher School of Economics, Russia
}
\email{prokhoro@mi.ras.ru}
 \thanks{The author was partially supported by 
the RFBR grants 15-01-02164, 15-01-02158,
by the Russian Academic Excellence Project '5-100', and by
RSF grant, project 14-21-00053 dated 11.08.14.}
\keywords{log canonical singularity, del Pezzo surface, smoothing}
\subjclass[2010]{14J17, 14B07, 14E30} 
\begin{document}
\maketitle
\begin{abstract}
We classify del Pezzo surfaces of Picard number one with
log canonical singularities admitting $\QQ$-Gorenstein smoothings.
\end{abstract}

\section{Introduction}
Throughout this paper we work over the complex number field $\CC$.
A \textit{smoothing} of a surface $X$ is a flat family $\XX\to\XD$
over a unit disk $0\in\XD\subset\CC$ such that the fiber $\XX_0$
is isomorphic to $X$ and the general fiber is smooth.
In this situation $X$ can be considered as a degeneration of a fiber $\XX_t$, $0\neq t\in\XD$.
A smoothing is said to be $\QQ$-\textit{Gorenstein} if so the total family $\XX$ is.
Throughout this paper a \textit{del Pezzo surface} means a normal projective
surface whose anticanonical divisor is $\QQ$-Cartier and ample.
We study $\QQ$-Gorenstein smoothings of del Pezzo surfaces with log canonical singularities.
This is interesting for applications to birational geometry and the minimal model program 
(see e.g. \cite{Mori-Prokhorov-2008d}, \cite{Prokhorov-e-QFano7}) as well as to moduli problems \cite{Kollar-ShB-1988}, \cite{Hacking2004}.
Smoothings of del Pezzo surfaces with log terminal singularities were considered in \cite{Manetti-1991},
\cite{Hacking-Prokhorov-2010}, \cite{Prokhorov-degenerations-del-Pezzo}.

\begin{theorem}
\label{theorem-main} 
Let $X$ be a del Pezzo surface with only log canonical singularities and $\uprho(X)=1$. 
Assume that $X$ admits a $\QQ$-Gorenstein smoothing and there exists at least one non-log terminal point $(o\in X)$. 
Let $\eta : Y\to X$ be the minimal resolution. Then there is a rational curve fibration 
$\varphi : Y\to T$ over a smooth curve $T$ such that a component $C_1$ of the $\eta$-exceptional divisor 
dominating $T$
is unique, it is a section of $\varphi$, and its discrepancy equals $-1$. 
Moreover, $o$ is the only non-log terminal singularity and singularities of $X$ 
outside $o$ are at worst Du Val of type \type {A}. The surface
$X$ and singular fibers of $\varphi$ are described in the table below. 

All the cases except possibly for 
\ref{types-I=2-2I2-IV} with $5\le n\le 8$ and~\ref{types-I=2-2IV} with $5\le n\le 10$ occur.
\end{theorem}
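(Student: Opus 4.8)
The plan is to prove the structural part of the theorem first and then deal with the classification and the realization claim separately. The main technical input is the $\QQ$-Gorenstein smoothability hypothesis, which forces strong constraints on the singularity $(o\in X)$: by the results of Kollár--Shepherd-Barron type theory (and the earlier analysis in \cite{Hacking-Prokhorov-2010}, \cite{Prokhorov-degenerations-del-Pezzo}), a non-log-terminal log canonical point admitting a $\QQ$-Gorenstein smoothing must be one of a very restricted list — essentially a simple elliptic or cusp singularity, or a cyclic/dihedral quotient thereof — and in particular the minimal resolution $\eta\colon Y\to X$ extracts over $o$ a divisor with a unique component $C_1$ of discrepancy $-1$ which is either an elliptic curve or a cycle/chain of rational curves. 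The first step is therefore to recall this local classification and to record that on $Y$ we obtain a reduced connected curve $E$ containing $C_1$, with $(C_1\cdot C_1)_Y < 0$ and with the remaining components of $E$ (and of the $\eta$-exceptional locus over the log terminal points) being $(-2)$-curves.

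Next I would run the minimal model program on $Y$ relative to a suitable divisor. The idea is that $-K_X$ ample and $\uprho(X)=1$ pull back to a curve class on $Y$ that is nef and big but not ample (it is trivial exactly on the $\eta$-exceptional $(-2)$-curves), so the cone of curves of $Y$ is generated by the $(-2)$-curves together with one more extremal ray. Contracting/flipping in the MMP, or rather running $(K_Y + C_1)$-MMP, I expect $C_1$ (which has coefficient $1$ and discrepancy $-1$, hence behaves like a boundary component) to survive and eventually to become a section of a $\PP^1$-fibration: since $K_Y$ is not pseudo-effective (as $-K_X$ is big), the output of the MMP is a Mori fiber space, and a dimension count together with the fact that $C_1$ dominates the base forces the base to be a curve $T$ and the fibers to be $\PP^1$'s, i.e. $\varphi\colon Y \to T$ is the rational curve fibration sought. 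That $C_1$ is the \emph{unique} $\eta$-exceptional component dominating $T$, and that it is a section (degree $1$ over $T$), follows from the local structure at $o$: the other components of $E$ are contracted in fibers, and $C_1\cdot F = 1$ for a general fiber $F$ because of the negativity/adjunction constraints. The statement that the other singularities of $X$ are Du Val of type $\mathrm{A}$ then comes from analyzing which configurations of $(-2)$-curves can sit inside fibers of $\varphi$ while keeping $\uprho(X)=1$ and keeping $-K_X$ ample.

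For the classification table itself, I would enumerate the possibilities fiber by fiber: having fixed that $\varphi\colon Y\to T$ is a $\PP^1$-fibration with section $C_1$, the surface $Y$ is determined by its singular fibers, each of which is a tree of rational curves; the constraint $\uprho(X)=1$ means that after contracting the $\eta$-exceptional curves exactly one independent class must remain, which pins down how many singular fibers there are and what their types can be, and the $\QQ$-Gorenstein smoothability of $o$ restricts the "special" fiber(s) meeting $C_1$ in the non-log-terminal locus. This is the bulk of the bookkeeping and I would organize it by the type of the singularity at $o$ (the invariant called $I$ in the table labels, taking the values seen in the referenced cases $\mathrm{I}=2$), listing for each the admissible collections of fibers; the discrepancy and smoothability conditions translate into explicit numerical inequalities on the chain lengths $n$.

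The hard part will be the final sentence — showing that all listed cases actually \emph{occur}, i.e. constructing for each entry of the table an honest $\QQ$-Gorenstein smoothing. For most entries I would build $X$ explicitly as a quotient or as an anticanonical model and then produce the smoothing either by a global deformation-theoretic argument (showing $T^1$ or the relevant obstruction space behaves well, so that local smoothings of $o$ glue to a global $\QQ$-Gorenstein one) or by exhibiting $X$ directly as a fiber of a known family, e.g. degenerating inside a weighted projective space or using the techniques of \cite{Hacking2004}. The genuinely open residual cases flagged in the statement — \ref{types-I=2-2I2-IV} with $5\le n\le 8$ and \ref{types-I=2-2IV} with $5\le n\le 10$ — are precisely the ones where neither an explicit smoothing construction nor an obstruction to smoothing is currently available, so I would state the classification for those as "these are the only remaining candidates" and leave existence of the smoothing open, matching the phrasing of the theorem.
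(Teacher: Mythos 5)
Your overall architecture (local classification of $\QQ$-Gorenstein smoothable strictly lc points as the key input, then an MMP to produce the fibration, then fiber-by-fiber bookkeeping, then unobstructedness plus explicit local smoothings for existence) matches the paper's, but there is a genuine gap at the central step. When you run an MMP on a surface with $-K$ big, the output is a Mori fiber space which is \emph{either} a fibration over a curve \emph{or} a birational contraction onto a log del Pezzo surface of Picard number one; nothing in your ``dimension count'' rules out the second alternative, and $C_1$ does not a priori dominate anything. The paper splits into exactly these two cases (working with the dlt modification $\tilde X$ of $(X\ni o)$ rather than with $Y$ directly) and spends all of Section~\ref{section-del-pezzo} excluding the birational case: one shows via adjunction and Shokurov connectedness that the image $\bar C$ of the dlt boundary would be a smooth rational curve carrying exactly two singular points of $\bar X$, and then derives contradictions either from $K_{\bar X}^2=9/2\notin\ZZ$ (using that the surviving singularities are of type $\rT$) or from $\deg\Diff_{\bar C}(0)<1$ being incompatible with two singular points on $\bar C$. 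Without this exclusion your proof does not get off the ground, since you cannot even assert that the fibration $\varphi$ exists.

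Two smaller inaccuracies. First, the minimal resolution of $(o\in X)$ does \emph{not} in general have a unique exceptional component of discrepancy $-1$: for index $1$ (cusps) and for index $2$ singularities $[n_1,\dots,n_s;[2]^4]$ with $s>1$, the whole central chain has discrepancy $-1$ (see the table entries \ref{types-I=2-2I2-IV} and \ref{types-I=2-2IV}, where the extra discrepancy $-1$ curves sit inside fibers of type $\mathrm{(II)}$). What is unique is the component dominating $T$, and this is a conclusion of the fibration analysis ($\tilde F\cdot\sum m_i\tilde C_i=2$ with all $m_i\ge 2$ forces exactly one $\tilde C_i$ with $\tilde F\cdot\tilde C_i=1$), not an input from the local classification; so singling out $C_1$ before running the MMP is not well defined. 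Second, your claim that $\overline{NE}(Y)$ is generated by the $(-2)$-curves plus one more ray is false in general (the exceptional curves are not all $(-2)$-curves, and $Y$ typically carries many $(-1)$-curves); the paper instead uses $\uprho(X)=1$ only to conclude that $\tilde C$ is not contained in the fibers and that every positive-dimensional fiber of a birational contraction meets $\tilde C$. You also omit the mechanism that actually bounds $n$ and computes $K_X^2$, namely the generalized Noether formula \eqref{equation-Noether-formula} combined with the Milnor-number computation \eqref{equation-computation-muP-nG}; this is where the numerical ranges in the table come from.
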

\par\medskip\noindent
\scalebox{0.9}{
\setlength{\extrarowheight}{6pt}
\begin{tabularx}{\textwidth}{c|c|c|c|c|c|c}
&\multicolumn2{c|}{singularities }&\multirow2{*}{$\uprho(Y)$ }&
\multirow2{*}{$K_X^2$ }&
\multirow2{*}{singular}&
\multirow2{*}
{condition}
\\
\cline{2-3}
& $(o\in X)$&$X\setminus\{o\}$&&&fibers of $\varphi$&on $n$
\\
\hline
\nr{}
\label{types-simple-elliptic}
& $\mathrm{Ell}_n$& $\emptyset$& $2$& $n$& $\emptyset$ & $n\le 9$
\\
\nr{}
\label{types-I=2-4-I2}
& $[n; [2]^4]$& $4\rA_1$& $10$& $n-2$& $4\mathrm{(I_2)}$ & $3\le n\le6$
\\
\nr{}
\label{types-I=2-2I2-IV}
&$[n,2,2; [2]^4]$& $2\rA_1$& $10$& $n-2$& $2\mathrm{(I_2)}\mathrm{(II)}$ & $3\le n\le 8$
\\
\nr{}
\label{types-I=2-2IV}
& $[2,2,n,2,2; [2]^4]$&$\emptyset$& $10$& $n-2$& $2\mathrm{(II)}$ & $3\le n\le10 $
\\
\nr{}
\label{types-I=3}&
$[n; [3]^3]$&$3\rA_2$& $11$& $n-1$& $3\mathrm{(I_3)}$ & $2,3,4$
\\
\nr{}
\label{types-I=4-I22I4}& 
$[n; [2],[4]^2]$ & $\rA_1$, $2\rA_3$& $12$& $n-1$ & $\mathrm{(I_2)}2\mathrm{(I_4)}$ & $2,3$
\\
\nr{}
\label{types-I=6}& 
$[2; [2],[3],[6]]$&$\rA_1$, $\rA_2$, $\rA_5$& $13$& $1$& $\mathrm{(I_2)}\mathrm{(I_3)}\mathrm{(I_6)}$ &
\end{tabularx}
}
\par\medskip\noindent
For a precise description the surfaces that occur in our classification we refer to Sect. 
\ref{section-fibrations}.

To show the existence of $\QQ$-Gorenstein smoothings we use unobstructedness 
of deformations (see Proposition~\ref{no-obstructions}) and 
local investigation of $\QQ$-Gorenstein smoothability 
of log canonical singularities:
\begin{theorem}
\label{Theorem-Q-smoothings}
Let $(X\ni P)$ be a strictly log canonical surface singularity of index $I>1$ admitting a 
$\QQ$-Gorenstein smoothing. Then it belongs to one of the following types:
\par\medskip\noindent\setlength{\extrarowheight}{1pt}
\begin{tabularx}{\textwidth}{l|l|l|l|l|l}
\hline
&$I$ & $(X\ni P)$&{\rm condition}& $\upmu_P$&$-\K^2$
\\\hline
\nrrr
\label{smoothing-2}&$2$& $[n_1,\dots, n_s; [2]^4]$& $\sum (n_i-3)\le 3$& $4-\sum (n_i-3)$& $\sum (n_i-2)$\\
\nrrr
\label{smoothing-3}&$3$ &$[n; [3],[3],[3]]$& $n=2,3,4$& $4-n$&$n$\\
\nrrr
\label{smoothing-4}&$4$ & $[n; [2],[4],[4]]$& $n=2,3$& $3-n$&$n+1$\\
\nrrr
\label{smoothing-6}&$6$ &$[2; [2],[3],[6]]$&&$0$&$4$
\end{tabularx}
where $\upmu_P$ is the Milnor fiber of the smoothing.

$\QQ$-Gorenstein smoothings exist in cases~\ref{smoothing-3},
\ref{smoothing-4},~\ref{smoothing-6}, as well as in the case~\ref{smoothing-2}
for singularities of types $[n; [2]^4]$ with $n\le 6$,
$[n_1,\dots, n_s; [2]^4]$ with $\sum (n_i-2)\le 2$, $[4,3; [2]^4]$, and $[3,3,3; [2]^4]$.
In all other cases the existence of $\QQ$-Gorenstein smoothings is unknown.
\end{theorem}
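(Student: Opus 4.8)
The plan is to split the statement into a classification part and an existence part, and to treat them separately.

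For the classification, I would start from the fact that a strictly log canonical surface singularity $(X \ni P)$ of index $I > 1$ has a well-understood minimal log resolution: the dual graph of the exceptional divisor is either a cycle of rational curves (a ``cusp'', index $1$, so excluded here) or one of the taut configurations coming from a quotient of a simple elliptic or cusp singularity by a cyclic group, giving precisely the chains $[n_1, \dots, n_s; [m]^k]$ with the central chain attached to $k$ tails of self-intersection $-m$, where $(k, m) \in \{(4,2), (3,3), (2,\text{mixed }2/4), (1, \text{mixed }2/3/6)\}$ — these are exactly the log canonical but not log terminal germs whose index is the order of the associated torsion in the local class group. The index constraint $I = 2, 3, 4, 6$ immediately restricts to the four families in the table. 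To cut down further within each family I would invoke the hypothesis that a $\QQ$-Gorenstein smoothing exists: such a smoothing has a Milnor fibre whose second Betti number, and hence whose self-intersection form, is controlled by Noether's formula / Laufer-type invariants, and the condition that this fibre be \emph{positive-dimensional} (equivalently $\upmu_P \geq 0$ together with the $\QQ$-Gorenstein deformation having the expected dimension) forces the numerical bounds $\sum(n_i - 3) \le 3$, $n \le 4$, $n \le 3$ respectively. Concretely, I would compute $-\K^2$ of the germ (the self-intersection of the canonical class on a good resolution, equivalently $\sum(n_i-2)$ in case~\ref{smoothing-2} and the tabulated values otherwise), relate it to the Milnor number $\upmu_P$ via the standard formula for the $\QQ$-Gorenstein Milnor fibre of a log canonical germ, and observe that $\upmu_P < 0$ would contradict smoothability. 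This pins down the ``condition'' column.

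For the existence part I would proceed family by family, exhibiting an explicit $\QQ$-Gorenstein one-parameter family in each stated case. The cleanest route is to realize each germ as the quotient singularity on a surface obtained from a simple-elliptic or cusp degeneration, and then to smooth it by deforming inside a suitable weighted projective or toric ambient space: for the index-$3$, index-$4$ and index-$6$ cases the germ $[n;[3]^3]$, $[n;[2],[4]^2]$, $[2;[2],[3],[6]]$ sits on a surface which is itself a well-chosen hypersurface or complete intersection in a cyclic quotient of $\CC^3$ (or of a weighted $\PP^3$), and the versal $\QQ$-Gorenstein deformation is unobstructed of the right dimension, so a generic member is smooth near $P$; this is where I would cite unobstructedness (Proposition~\ref{no-obstructions}) to guarantee the deformation is genuinely $\QQ$-Gorenstein rather than merely formal. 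For case~\ref{smoothing-2}, the surfaces $[n;[2]^4]$ with $n \le 6$ are exactly the ones arising as $\QQ$-Gorenstein degenerations of del Pezzo surfaces of degree $\le 6$ with four $\rA_1$'s, and an explicit smoothing can be written down in the cone/toric picture; the mixed cases $[n_1,\dots,n_s;[2]^4]$ with $\sum(n_i-2) \le 2$, and the two sporadic cases $[4,3;[2]^4]$ and $[3,3,3;[2]^4]$, I would handle by exhibiting the germ as a small deformation of one of the already-smoothed cases together with a computation showing the $\QQ$-Gorenstein deformation functor is smooth.

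The main obstacle, and the reason the statement leaves the remaining index-$2$ cases open, is precisely the failure of this last strategy beyond $\sum(n_i-2)\le 2$ and the two sporadic configurations: for the larger $[n;[2]^4]$ germs with $7 \le n$ and for the general $[n_1,\dots,n_s;[2]^4]$ with $\sum(n_i-2) \ge 3$, the $\QQ$-Gorenstein deformation space, while of the expected dimension, is not visibly smooth and I cannot produce an explicit smoothing nor an obstruction; deciding these would require either a finer analysis of the obstruction space $T^2$ of the $\QQ$-Gorenstein deformation functor for these non-quasihomogeneous germs, or a global argument producing the smoothing from a degeneration of an honest del Pezzo surface — and it is exactly the corresponding global cases~\ref{types-I=2-2I2-IV} and~\ref{types-I=2-2IV} of Theorem~\ref{theorem-main} whose occurrence is left undecided. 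So I would present the existence half as a finite list of explicit constructions, flag the rest as open, and make no claim of a uniform method.
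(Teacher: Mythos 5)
Your outline of the classification half has two genuine gaps, one of which is the heart of the paper's argument.

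First, for $I\ge 3$ Kawamata's classification gives graphs $\langle n; r_1,r_2,r_3\rangle$ in which each branch may be either a single $(-r_i)$-curve or a Du Val chain $[2,\dots,2]$ of length $r_i-1$, and the integrality of $\K^2$ (Proposition~\ref{Proposition-computation-K2}) only forces all three branches to be of the \emph{same} kind. So after your numerical reductions two families survive for each $I\in\{3,4,6\}$: the type $\mathrm{(nDV)}$ germs $[n;[r_1],[r_2],[r_3]]$ appearing in the table, and the type $\mathrm{(DV)}$ germs with Du Val branches. The latter are \emph{not} excluded by $\upmu_P\ge 0$: there one computes $\upmu_P=\sum r_i-n$, which is nonnegative for many $n$. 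Eliminating them is Proposition~\ref{proposition-nDV}, the key step of the proof, and it is a genuinely three-dimensional argument: one passes to the index one cover of the total space $(\XX\ni P)$ of the smoothing, uses Lemma~\ref{lemma-canonical} to see that $(\XX\ni P)$ is canonical of index $I$, and then analyses separately the terminal case (via Reid's classification of terminal singularities and explicit equations) and the strictly canonical case (via the crepant blowup and Kawakita's theorem on indices of threefold canonical singularities). Nothing in your proposal replaces this step.

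Second, for $I=2$ the inequality $\upmu_P\ge 0$ gives only $\sum(n_i-3)\le 4$, not the stated $\le 3$. The paper obtains the sharper bound by observing that a $\QQ$-Gorenstein smoothing of $(X\ni P)$ induces an equivariant smoothing of the index one cover $(X^\sharp\ni P^\sharp)$, which is a cusp (for $s>1$) or simple elliptic (for $s=1$) singularity, and then invoking Wahl's smoothability criterion $\mult(X^\sharp\ni P^\sharp)\le\varsigma_{P^\sharp}+9$ (resp.\ $\mult\le 9$ in the simple elliptic case) together with the multiplicity formula $\mult=-\K^2_{(X^\sharp\ni P^\sharp)}=2\sum(n_i-2)$. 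Your proposal does not use the index one cover of the germ at all, so this bound is out of reach by your method. The existence half of your proposal is in the right spirit (explicit hyperquotient families and quotients of cones over anticanonical elliptic curves in del Pezzo surfaces, which is what Sections~\ref{section-Examples} and~\ref{sect-Indices} do, plus Stevens' and de Jong--van Straten's results for the index-$2$ sporadic cases), though note that Proposition~\ref{no-obstructions} addresses local-to-global obstructions on a projective surface and is not what certifies that a local deformation of a germ is $\QQ$-Gorenstein.
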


Smoothability of log canonical singularities of index $1$ were studied earlier 
(see e.g. \cite[Ex. 6.4]{Looijenga-Wahl-1986}, \cite[Corollary 5.12]{Wahl1980}.

As a bi-product we construct essentially canonical threefold 
singularities of index $5$ and $6$.
We say that a canonical singularity $(\XX\ni o)$ is \textit{essentially canonical} 
if there exist a crepant divisor with center $o$. 
V.~Shokurov conjectured that essentially canonical singularities of 
given dimension have bounded indices. 
This is well-known in dimension two: canonical surface singularities are
Du Val and their index equals $1$.
Shokurov's conjecture was proved 
in dimension three by M. Kawakita \cite{Kawakita-index}.
More precisely, he proved that the index of an essentially canonical
threefold singularity is at most $6$.
The following theorem supplements Kawakita's result.
\begin{theorem}
\label{theorem-main-index}
For any $1\le I\le 6$ there exist 
a three-dimensional essentially
canonical singularity of index $I$.
\end{theorem}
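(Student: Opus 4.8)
The plan is to produce, for each $I$ with $1\le I\le 6$, an isolated three-dimensional canonical singularity $(W\ni w)$ of index exactly $I$ carrying a divisorial valuation with center $w$ and discrepancy $0$. For $I=1$ one may simply take $W=\CC^3/\mumu_3$ with $\mumu_3$ acting diagonally — the cone over $(\PP^2,\OOO_{\PP^2}(-K_{\PP^2}))$ — whose blow-up at $w$ is crepant. The cases $I\in\{2,3,4,6\}$ I would obtain from the degenerations studied here: let $W=\XX$ be the total space of a $\QQ$-Gorenstein smoothing $\XX\to\XD$ of a strictly log canonical surface germ of index $I$ as in Theorem~\ref{Theorem-Q-smoothings} — concretely of type $[3;[2]^4]$ for $I=2$, $[2;[3],[3],[3]]$ for $I=3$, $[2;[2],[4],[4]]$ for $I=4$, and $[2;[2],[3],[6]]$ for $I=6$, each of which is $\QQ$-Gorenstein smoothable by that theorem — and let $w=o$ be the non-log terminal point of the central fiber $X=\XX_0$.

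For such $W=\XX$ three points have to be verified. First, $\XX$ is canonical at $o$: after shrinking $\XD$ all fibers $\XX_t$ with $t\neq 0$ are smooth, so $\XX\setminus X$ is smooth and $\Sing\XX\subseteq\Sing X$ is finite; hence every exceptional divisor $G$ over $(\XX\ni o)$ maps into $X$, and $a(G,\XX)=a(G,\XX,X)+\mult_G(f^\ast X)\ge -1+1=0$, the first term being $\ge -1$ because $(\XX,X)$ is log canonical (inversion of adjunction applied to the log canonical surface $X$) and because $X$ is a Cartier divisor. Second, the index of $(\XX\ni o)$ is $I$: since $X=\XX_0$ is Cartier in $\XX$ the different of $(\XX,X)$ along $X$ vanishes and $\omega_{\XX}|_X=\omega_X$, so $\omega_{\XX}^{[m]}$ is invertible near $o$ precisely when $\omega_X^{[m]}$ is, i.e. precisely when $I\mid m$. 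Third, $(\XX\ni o)$ is essentially canonical: because $o$ is strictly log canonical there is a divisor $F$ over $X$ with center $o$ and $a(F,X)=-1$; by the vanishing of the different $a(F,\XX,X)=a(F,X)=-1$, whence $a(F,\XX)=-1+\mult_F(f^\ast X)$, so it suffices to realize the smoothing so that $F$ occurs with multiplicity one in the central fiber of a resolution. In each of the four types this is done on the weighted blow-up coming from the explicit local model of $\XX$ built in the proof of Theorem~\ref{Theorem-Q-smoothings} (equivalently one shows that the isolated canonical singularity $(\XX\ni o)$ is not terminal). The cases $I\le 4$ and $I=6$ of the theorem follow.

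The genuinely new and hardest case is $I=5$, and it is the main obstacle. It is not reachable through the surface theory above: there is no strictly log canonical surface singularity of index $5$ (the crystallographic restriction confines the indices of such singularities to $\{1,2,3,4,6\}$), so no total space of a smoothing as above has index $5$, and a short age computation shows that no cyclic quotient threefold singularity is both essentially canonical and of index $5$. I would therefore construct an index-$5$ example by hand — either as a cone over a suitably chosen log del Pezzo surface with a non-Gorenstein singularity and an appropriate rank-one Weil polarization, or as a quotient $W=Y/\mumu_5$ with $Y=\{f=0\}\subset\CC^4$ a quasi-homogeneous hypersurface singularity satisfying $\sum w_i=\deg f+1$ — so that the weighted blow-up of the origin extracts a crepant divisor $E\cong\{f=0\}\subset\PP(w_1,\dots,w_4)$ over $Y$ — and $\mumu_5$ acting diagonally so as to preserve $f$, be small and free in codimension one, act on $\omega_Y$ through a character of order exactly $5$, and, crucially, act non-trivially on $E$ (a non-trivial inertia of $E$ of order $e$ would lower its discrepancy from $0$ to $\tfrac1e-1<0$ and destroy canonicity). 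A concrete candidate is $f=x^2+y^5+z^5+t^5$ with weights $(5,2,2,2)$ and $\mumu_5$ of weights $(0,1,2,3)$: then $\sum w_i=11=\deg f+1$, the character on $\omega_Y$ is $\zeta^{6}=\zeta$ of order $5$, the action is small and free on $Y\setminus\{0\}$, and $E\subset\PP(5,2,2,2)$ is not $\mumu_5$-fixed. The hard part is then to show that $W$ is actually canonical at its singular point: this reduces, after passing to the well-formed model of $\PP(5,2,2,2)$ and to the partial resolution $\widetilde Y/\mumu_5\to W$, to checking that the finitely many cyclic quotient singularities lying over the $\mumu_5$-fixed points of $E$ are canonical — a concrete Reid--Tai computation, and if it fails one must retune the weights of the $\mumu_5$-action or the equation $f$.
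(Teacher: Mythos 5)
Your reduction of ``essentially canonical'' to ``isolated, canonical and non-terminal'' is fine, and your index and canonicity arguments for the total space of a smoothing are correct. But the proposal has two genuine gaps. First, for $I=2,3,4$ the specific germs you chose are the wrong ones: the explicit local models of their $\QQ$-Gorenstein smoothings constructed in Sect.~\ref{section-Examples} have \emph{terminal} total spaces, so the crepant place of $(X\ni o)$ does \emph{not} survive with multiplicity one. Concretely, $[3;[2]^4]$ is smoothed inside $\CC^3/\mumu_2(1,1,1)$ (Example~\ref{example-singularity-index-2-m} with $k=m=1$), $[2;[3]^3]$ inside $\CC^3/\mumu_3(1,1,2)$ (Example~\ref{example-singularity-index-3}), and $[2;[2],[4]^2]$ inside a $\mathrm{cAx/4}$ point (Example~\ref{example-4}); all three are terminal, which is exactly why the proof of Proposition~\ref{proposition-nDV} has to treat the terminal case separately. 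To make your strategy work for $I=3,4$ you must take $[4;[3]^3]$ and $[3;[2],[4]^2]$ with the smoothings of Examples~\ref{example-singularity-index-3-canonical} and~\ref{example-singularity-index-4-canonical} (total spaces $\CC^3/\mumu_9(1,4,7)$ and a canonical $\mathrm{cA}$-quotient of index $4$), and verify non-terminality there; for $I=2$ you likewise need to exhibit a smoothing with non-terminal total space, which you have not done. Only $I=6$ is automatic on your route (the index-one cover of $[2;[2],[3],[6]]$ has embedding dimension $6>4$, so the total space cannot be terminal), but even there the existence of the smoothing is established in the paper precisely by the cone construction you are trying to avoid.

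Second, and decisively, the case $I=5$ --- the only case in which the theorem is actually new --- is not proved. You produce a plausible candidate $\{x^2+y^5+z^5+t^5=0\}/\mumu_5(0,1,2,3)$ and correctly identify the remaining verification (canonicity of the quotient, via the singularities of the $(5,2,2,2)$-blowup and the $\mumu_5$-stabilizers on the crepant divisor), but you explicitly leave it open and concede it may force a change of weights. That computation is the entire content of the case, so this is a missing step, not a detail. The paper's proof runs differently and avoids the issue: for each $I$ it takes the anticanonical cone $Z$ over a smooth del Pezzo surface $S$ with a $\mumu_I$-action that is free in codimension one and has Du Val quotient (for $I=5$, the quintic del Pezzo with its order-$5$ Cremona automorphism, Example~\ref{example-deg=5}), twists the action by the $\CC^*$-action on the fibers of $\Tot(\OOO_S(K_S))$ so that the partial resolution $\tilde\XX=\tilde Z/\varsigma'(G)$ acquires only terminal cyclic quotient points (Claim~\ref{claim-canonical-terminal}), and observes that $\tilde\XX\to\XX$ is crepant with exceptional divisor $\tilde S/G$ over the cone point (Lemma~\ref{lemma-crepant}, Proposition~\ref{proposition-main-construction}); canonicity, non-terminality and the index all come out of the construction with no Reid--Tai analysis. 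You should either carry out your deferred computation in full or switch to a construction of this type.
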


In fact our result is new only for $I=5$ and $6$: \cite{Hayakawa-Takeuchi-1987} 
classified threefold canonical hyperquotient singularities
and among them there are examples satisfying conditions of 
our theorem with $I \le 4$.
Theorem~\ref{theorem-main-index} together with \cite{Kawakita-index} gives the following

\begin{theorem}
Let $\mathfrak I$ be the set of indices of three-dimensional essentially
canonical singularities. Then 
\begin{equation*}
\mathfrak I=\{1,2,3,4,5,6\}.
\end{equation*}
\end{theorem}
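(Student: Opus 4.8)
The plan is to obtain the equality by squeezing $\mathfrak I$ between two inclusions coming from opposite directions. Since the Gorenstein index of a normal $\QQ$-Gorenstein singularity is by definition the least positive integer $I$ with $IK$ Cartier, we automatically have $\mathfrak I\subseteq\ZZ_{>0}$. The upper bound $\mathfrak I\subseteq\{1,2,3,4,5,6\}$ is exactly Kawakita's theorem \cite{Kawakita-index}, which asserts that the index of a three-dimensional essentially canonical singularity never exceeds $6$. For the opposite inclusion I would simply invoke Theorem~\ref{theorem-main-index}: it supplies, for every $I$ with $1\le I\le 6$, a three-dimensional essentially canonical singularity whose index is exactly $I$, so that $\{1,2,3,4,5,6\}\subseteq\mathfrak I$. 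Combining the two inclusions gives $\mathfrak I=\{1,2,3,4,5,6\}$, which is the assertion.

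At this level of assembly there is no genuine obstacle: the statement is a formal consequence of the two results just cited, and the argument is literally the two lines above. If one asks where the real difficulty sits, it is entirely inside Theorem~\ref{theorem-main-index}, and specifically in its new content, namely the production of essentially canonical threefold singularities of indices $5$ and $6$. These are constructed in the body of the paper out of the $\QQ$-Gorenstein smoothings of the strictly log canonical surface singularities of Theorem~\ref{Theorem-Q-smoothings} (in particular the index-$6$ example is related to the singularity of type \ref{smoothing-6}), and what must be verified there is that the resulting threefold is canonical and carries a crepant divisor whose center is the distinguished point. The values $I\le 4$ are already available, either from the same constructions or from the classification of canonical threefold hyperquotient singularities in \cite{Hayakawa-Takeuchi-1987}. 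Once Theorem~\ref{theorem-main-index} and \cite{Kawakita-index} are both in hand, the present theorem follows immediately, with no further computation required.
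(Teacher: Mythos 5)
Your proof is correct and is exactly the paper's argument: the theorem is stated as an immediate consequence of Kawakita's bound $I\le 6$ together with Theorem~\ref{theorem-main-index}, which supplies examples for each $1\le I\le 6$. (One peripheral inaccuracy in your commentary: the index-$5$ and index-$6$ examples of Theorem~\ref{theorem-main-index} are produced by the cone construction of Section~\ref{sect-Indices} — quotients of $\Tot(\OOO_S(K_S))$ over del Pezzo surfaces of degrees $5$ and $6$ with suitable cyclic automorphisms, cf.\ Proposition~\ref{proposition-main-construction} — rather than from the $\QQ$-Gorenstein smoothings of Theorem~\ref{Theorem-Q-smoothings}, which are a separate application of the same technique; this does not affect the validity of your proof of the present statement.)
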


The paper is organized as follows.
Sect.~\ref{sect-lc} is preliminary. In Sect.~\ref{sect-smoothings}
we obtain necessary  conditions for $\QQ$-Gorenstein smoothability
of two-dimensional  log canonical singularities. 
In Sect.~\ref{section-Examples} we construct examples of $\QQ$-Gorenstein smoothings.
Theorem~\ref{theorem-main-index} will be proved in Sect.
\ref{sect-Indices}.
In Sect.~\ref{sect-Noether} 
we collect important results on del Pezzo surfaces 
admitting $\QQ$-Gorenstein smoothings. The main birational construction 
for the proof of Theorem~\ref{theorem-main}
is outlined in Sect.~\ref{section-Del-Pezzo-surfaces}. 
which  will be considered in Sect.
\ref{section-fibrations} and
\ref{section-del-pezzo}.

\par\medskip\noindent
\textbf{Acknowledgments. } 
I thank Brendan Hassett whose questions encouraged me to write up
my computations. The questions were asked during Simons Symposia ``Geometry Over Nonclosed Fields, 2016''.
I am grateful to the organizers of this activity for the invitation and  creative atmosphere.
I also would like to thank the referee for careful reading and numerous helpful comments and suggestions.

\section{Log canonical singularities}\label{sect-lc}
For basic definitions and terminology of the minimal model program, 
we refer to \cite{Kollar-Mori-1988} or \cite{Utah}.
\begin{case}
\label{notation-singularities}
Let $(X\ni o)$ be a log canonical surface singularity.
The \textit{index} of $(X\ni o)$ is the smallest positive integer $I$
such that $IK_X$ is Cartier. 
We say that $(X\ni o)$ is \textit{strictly log canonical}
if it is log canonical but not log terminal.
\end{case}

\begin{definition}
A normal Gorenstein surface singularity is said to be \textit{simple
elliptic} if the exceptional divisor of the minimal resolution is a smooth elliptic
curve. We say that a simple
elliptic singularity is of type $\mathrm{Ell}_n$ if the self-intersection of 
the exceptional divisor equals $-n$.

A normal Gorenstein surface singularity is called a \textit{cusp} if the
exceptional divisor of the minimal resolution is a cycle of smooth rational curves
or a rational nodal curve.
\end{definition}

\begin{case}
We recall a notation on weighted graphs.
Let $(X\ni o)$ be a rational surface singularity, let
$\eta:Y\to X$ be its minimal resolution, and let
$E=\sum E_i$ be the exceptional divisor.
Let $\Gamma=\Gamma(X,o)$ be the dual graph of $(X\ni o)$,
that is,  $\Gamma$ is a weighted graph whose 
vertices correspond to exceptional prime divisors $E_i$
and edges join vertices meeting each other.
In the usual way we attach to each vertex $E_i$ the number $-E_i^2$.
Typically, we omit $2$ if $-E_i^2=2$.

If $(X\ni o)$ is a cyclic quotient singularity of type $\frac 1r (1,q)$, $\gcd(r,q)=1$,
then the graph $\Gamma$ is a chain:
\begin{equation}
\label{equation-chain}
\xy
\xymatrix@C=38pt{
&\underset{n_1}\circ\ar@{-}[r]&\underset{n_2}\circ\ar@{-}[r]
&\cdots\ar@{-}[r]&\underset{n_k}\circ
}
\endxy
\end{equation}
We denote it by $[n_1,\dots,n_k]=\langle r,\, q\rangle$. The numbers $n_i$ are determined by the expression 
of $r/q$ as a continued fraction \cite{Brieskorn-1967-1968}. 
For positive integers $n$, $r_i$, $q_i$,\, $\gcd(r_i,q_i)=1$, $i=1,\dots,s$, the symbol
\begin{equation*}
\langle n; r_1,\dots, r_s;\, q_1,\dots, q_s\rangle
\end{equation*}
denotes the following graph
\begin{equation*}
\xy
\xymatrix@C=38pt{
\langle r_2,\, q_2\rangle&\cdots&\langle r_{s-1},\, q_{s-1}\rangle
\\
\langle r_1,\, q_1\rangle&\underset{n}\circ\ar@{-}[r]\ar@{-}[ul]\ar@{-}[ur]\ar@{-}[l]&\langle r_s,\, q_s\rangle
}
\endxy
\end{equation*}
For short, we will omit $q_i$'s:$\langle n; r_1,\dots, r_s\rangle$.
If $\langle r_i,\, q_i\rangle=[n_{i,1},n_{i,2},\dots]$, then we also denote
\begin{equation*}
\langle n; r_1,\dots, r_s;\, q_1,\dots, q_s\rangle=
[n; [n_{1,1},n_{1,2},\dots],\dots, [n_{s,1},n_{s,2},\dots]].
\end{equation*}
For example,
$\langle n; 3,3,3; 1,1, 2\rangle=[n; [3], [3], [2,2]]$ is the graph:
\begin{equation*}
\xy
\xymatrix@R=3pt{
&\overset3\circ&
\\
\underset3\circ&\underset{n}\circ\ar@{-}[r]\ar@{-}[u]\ar@{-}[l]&\underset{}\circ\ar@{-}[r]&\underset{}\circ
}
\endxy
\end{equation*}
The graph
\begin{equation*}
\xy
\xymatrix@R=3pt{
\circ&&&&\circ
\\
&\underset{n_1}\circ\ar@{-}[r]\ar@{-}[lu]\ar@{-}[ld]&\cdots\ar@{-}[r]&\underset{n_s}\circ\ar@{-}[ru]\ar@{-}[rd]
\\
\circ&&&&\circ
}
\endxy
\end{equation*}
will be denoted by $[n_1,\dots,n_s; [2]^4]$. 
\end{case}

\begin{theorem}[{\cite[\S 9]{Kawamata-1988-crep}}]
\label{theorem-classification-lc-singularities}
Let $(X\ni o)$ be a strictly log canonical surface singularity of index $I$.
Then one of the following holds:
\begin{enumerate}
\item
\label{Theorem-simple-elliptic-cusp=I=1}
$I=1$ if and only if $(X\ni o)$ is either a simple elliptic singularity or a 
cusp,
\item 
$I=2$ if and only if $\Gamma(X,o)$ is of type $[n_1,\dots,n_s; [2]^4]$, $s\ge 1$,
\item 
$I=3$ if and only if $\Gamma(X,o)$ is of type $\langle n; 3,3,3\rangle$,

\item 
$I=4$ if and only if $\Gamma(X,o)$ is of type $\langle n; 2,4,4\rangle$,

\item 
$I=6$ if and only if $\Gamma(X,o)$ is of type $\langle n; 2,3,6\rangle$.
\end{enumerate}
\end{theorem}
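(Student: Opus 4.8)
The plan is to reduce to the Gorenstein case $I=1$ by passing to the canonical cover, to classify Gorenstein strictly log canonical singularities by hand, and then to describe all cyclic quotients of simple elliptic singularities and cusps. Suppose first $K_X$ is Cartier. On the minimal resolution $\eta\colon Y\to X$ write $K_Y=\eta^*K_X+\sum a_iE_i$. Since $\eta$ is minimal $K_Y\cdot E_i\ge0$, and because the intersection matrix of $E=\sum E_i$ is negative definite with non-negative off-diagonal entries, all $a_i\le0$; log canonicity gives $a_i\ge-1$, and the $a_i$ lie in $\ZZ$, so $D:=-\sum a_iE_i$ is a reduced effective exceptional divisor. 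If $(X\ni o)$ is not canonical, then $D\neq0$, and using $(K_Y+D)\cdot E_i=\eta^*K_X\cdot E_i=0$ together with the signs of $K_Y\cdot E_i$ and of $D\cdot E_i$ (for $E_i\not\subset D$) one sees that any curve not contained in $D$ is disjoint from $D$; connectedness of $E$ then forces $D=E$, i.e.\ every discrepancy equals $-1$. Now $(X\ni o)$ is log canonical iff $(Y,E)$ is, which for a reduced divisor on a smooth surface holds iff $E$ is a normal crossing divisor (a nodal component being allowed). Substituting into the adjunction identity $(K_Y+E)\cdot E_i=(2p_a(E_i)-2)+\#\bigl(E_i\cap(E-E_i)\bigr)=0$ forces either $E$ to be a single smooth genus-one curve, or every $E_i$ to be rational with the dual graph of $E$ a cycle (a single node if there is one component) --- exactly the simple elliptic singularities and the cusps. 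Both are readily seen to be strictly log canonical of index $1$; this is case~(i).

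\emph{Reduction for $I>1$.} Let $\pi\colon(X'\ni o')\to(X\ni o)$ be the canonical cover --- the cyclic degree-$I$ cover attached to the $I$-torsion class of $K_X$ in the local class group; it is \'etale in codimension one and $K_{X'}=\pi^*K_X$ is Cartier. Log canonicity passes up and down such a cover, so $(X'\ni o')$ is Gorenstein log canonical, and it is not canonical (else $X=X'/\mumu_I$ would be log terminal). By the Gorenstein case $(X'\ni o')$ is simple elliptic or a cusp and $(X\ni o)=(X'\ni o')/\mumu_I$ for a $\mumu_I$-action free in codimension one; functoriality of the minimal resolution $Y'\to X'$ makes $\mumu_I$ act on $Y'$ preserving the exceptional curve $Z'$. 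A subgroup acting trivially on $Z'$ would act by scaling on its normal bundle, so the corresponding subquotient of $X$ would be Gorenstein and, $I$ being the index, that subgroup is trivial: $\mumu_I$ acts faithfully on $Z'$. A generator $g$ cannot act freely on $Z'$ either (it would then act freely near $Z'$, making $X$ Gorenstein), so $g$ has a fixed point on $Z'$.

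\emph{Bounding $I$ and reading off the graph.} If $Z'$ is an elliptic curve, then after translating $g$ is a group automorphism of order $I$ with a fixed point, so $I\in\{2,3,4,6\}$; if $Z'$ is a cycle of rational curves (or a node), a fixed point of $g$ forces $g$ to act as a reflection of the cycle, hence $g^2$ acts trivially on $Z'$ and $I=2$. Resolving the quotient, $Z'$ descends to a rational curve $C$ (an elliptic curve modulo an automorphism with a fixed point, or a cycle modulo a reflection, is rational), and each point of $Z'$ with non-trivial stabiliser gives a cyclic quotient singularity $\frac1r(1,q)$ over $o$ whose resolution attaches an arm $[n_1,n_2,\dots]=\langle r,q\rangle$ to $C$. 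Tabulating the stabiliser data at the fixed points of each group automorphism of order $2,3,4,6$ of an elliptic curve produces the arms $[2]^4,\ 3,3,3,\ 2,4,4,\ 2,3,6$ respectively --- for $I=2$ the central piece is a single vertex (simple elliptic) or, for a reflected cusp, a chain, so in all $I=2$ cases the graph is $[n_1,\dots,n_s;[2]^4]$ with $s\ge1$ --- and the weight of $C$ (respectively the chain $n_1,\dots,n_s$) is determined by $Z'^2$ and the arm contributions. Conversely, for each graph in cases (ii)--(v) one exhibits its canonical cover, recognises it as simple elliptic or a cusp, and checks that the index is exactly $2,3,4$ or $6$.

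The main obstacle is the fixed-point bookkeeping of this last step: separating the two order-$4$ fixed points from the residual order-$2$ one when $I=4$, handling the three fixed points of distinct orders when $I=6$, treating the reflection of a cycle when $I=2$ (which is what turns a star into the long chains), and --- most delicately --- pinning down the exact self-intersection numbers of the arm vertices and of the central curve, together with the converse computation of the index for each listed graph. Everything preceding is formal once the Gorenstein classification and the canonical cover are in place.
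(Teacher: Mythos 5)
The paper offers no proof of this theorem --- it is quoted from Kawamata \cite[\S 9]{Kawamata-1988-crep} --- and your sketch follows the same standard route: classify the Gorenstein strictly lc singularities as simple elliptic or cusp points, then analyse the cyclic covering group of the index one cover. Your Gorenstein step (negativity lemma forcing all discrepancies to be $-1$, then adjunction on the crepant pair $(Y,E)$) and your treatment of the elliptic-curve case are sound.

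There is, however, a genuine gap in the cusp case. You assert that a fixed point of the generator $g$ on a cycle $Z'$ of rational curves forces $g|_{Z'}$ to be a reflection of the cycle, hence $I=2$. That implication is false for abstract automorphisms of such curves: $g$ may fix a component together with both of its nodes and rotate it. For the irreducible rational nodal curve $\PP^1/(0\sim\infty)$ the automorphism induced by $z\mapsto\zeta z$ fixes the node, acts faithfully, and has arbitrary order, yet is not a reflection; your faithfulness argument (a subgroup acting trivially on $Z'$ gives a Gorenstein subquotient) only controls the kernel of $\mumu_I\to\Aut(Z')$ and so does not exclude it. What actually forces $I=2$ for cusps is the stronger fact, recorded in the paper's Construction~\ref{construction-index-one-cove-log-canonical}, that the covering group acts \emph{faithfully on the one-dimensional space} $H^0(Z',\omega_{Z'})$ (equivalently, on the residue of the generating log $2$-form of $\omega_{X'}$). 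For a cycle of rational curves $\Aut(Z')$ acts on $H^0(Z',\omega_{Z'})$ through $\{\pm1\}$: rotations, and automorphisms fixing all components and nodes, preserve the form $dz/z$ on each component, while orientation-reversing reflections negate it. Hence $I\le 2$ and the generator acts by $-1$, i.e.\ is a reflection --- and the counterexample above acts by $+1$, which is precisely why its quotient is again Gorenstein and cannot arise from an index one cover. You need this $\omega$-faithfulness (it also streamlines the elliptic case, identifying $\mumu_I$ with a subgroup of $\Aut(E,O)\subset\CC^*$); without it the reduction to $[n_1,\dots,n_s;[2]^4]$ for $I=2$ is not established.
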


\begin{scorollary}
A strictly log canonical surface singularity 
is not rational if and only if it is of index $1$.
\end{scorollary}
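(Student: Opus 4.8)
The plan is to read off both implications directly from the classification in Theorem~\ref{theorem-classification-lc-singularities}, using Artin's criterion for rationality of a normal surface singularity $(X\ni o)$ with minimal resolution $\eta\colon Y\to X$: the singularity is rational if and only if the fundamental cycle $Z$ supported on the $\eta$-exceptional divisor satisfies $p_a(Z)=1+\tfrac12(Z^2+K_Y\cdot Z)=0$; moreover this property depends only on the weighted dual graph $\Gamma(X,o)$.

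First I would dispose of the case $I=1$. By Theorem~\ref{theorem-classification-lc-singularities}\,\xref{Theorem-simple-elliptic-cusp=I=1} the singularity is simple elliptic or a cusp, hence Gorenstein, and the exceptional divisor of the minimal resolution is a smooth elliptic curve, a cycle of smooth rational curves, or a rational nodal curve; in each case $h^1(\OOO)=1$, so $R^1\eta_*\OOO_Y\neq 0$ and $(X\ni o)$ is not rational. (Equivalently: a rational Gorenstein surface singularity is Du Val, so its dual graph is a tree of $(-2)$-curves, which none of these configurations is.) This gives ``index $1\Rightarrow$ not rational''.

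For the converse I argue by contraposition: if $I>1$ then $(X\ni o)$ is rational. By Theorem~\ref{theorem-classification-lc-singularities} the graph $\Gamma(X,o)$ is one of $[n_1,\dots,n_s;[2]^4]$, $\langle n;3,3,3\rangle$, $\langle n;2,4,4\rangle$, $\langle n;2,3,6\rangle$; each of these is a connected, hence tree-shaped, configuration of smooth rational curves with negative definite intersection form. Since rationality depends only on $\Gamma$, it suffices to check that each of these is a \emph{rational graph}. For the three star-shaped families this is a short explicit computation of the fundamental cycle. For $[n_1,\dots,n_s;[2]^4]$ one runs Laufer's algorithm: the coefficients of $Z$ along the central chain depend on the tuple $(n_1,\dots,n_s)$ (they turn out to be $1$ or $2$), while the four $(-2)$-tails always occur with coefficient $1$, and in every admissible case one obtains $p_a(Z)=0$. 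Hence all strictly log canonical singularities of index $I>1$ are rational, as required. (More conceptually, one may instead use that the index-one cyclic cover $(X'\ni o')\to(X\ni o)$ is quasi-\'etale of degree $I$ and Gorenstein log canonical, hence simple elliptic or a cusp by Theorem~\ref{theorem-classification-lc-singularities}\,\xref{Theorem-simple-elliptic-cusp=I=1}; the Galois group $\mumu_I$ then acts on the one-dimensional space $H^1(\OOO)$ of the minimal resolution of $X'$ through a nontrivial character precisely because $I>1$, so its invariants vanish and $p_g(X)=0$.)

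The only genuine work lies in the converse direction, and within it in handling the infinite family $[n_1,\dots,n_s;[2]^4]$ uniformly: one must identify the fundamental cycle for every admissible tuple $(n_1,\dots,n_s)$ rather than for a single graph. I expect this to be elementary but to require some care with Laufer's algorithm; all remaining cases reduce to short explicit checks. If one prefers the cover-theoretic route, the analogous obstacle is pinning down the $\mumu_I$-action on $H^1$ of the resolution of the covering simple elliptic or cusp singularity and verifying it has no nonzero invariants.
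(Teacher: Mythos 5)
Your proof is correct and follows the same route the paper intends: the corollary is stated as an immediate consequence of Theorem~\ref{theorem-classification-lc-singularities}, with index~$1$ giving a simple elliptic or cusp singularity (non-rational since $p_g=1$) and index~$>1$ giving the listed trees of rational curves, which are rational graphs. Either of your two justifications for the index~$>1$ direction (Laufer's algorithm on the fundamental cycle, or the vanishing of $\mumu_I$-invariants in $p_g$ of the index-one cover) is a standard and adequate way to fill in the detail the paper leaves implicit.
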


\begin{case}
Let $(X\ni o)$ be a strictly log canonical surface singularity of index $I$, 
let 
$\eta: Y\to X$ be its minimal resolution, and let $E=\sum E_i$
be the exceptional divisor. Let us contract all the components of $E$ 
with discrepancies $>-1$:
\begin{equation}
\label{equation-min-resolution}
\eta: Y \overset{\tilde \eta}\longrightarrow \tilde X \overset{\sigma}\longrightarrow X.
\end{equation}
Let $\tilde C=\sum \tilde C_i:= \tilde \eta_* E$ be the $\sigma$-exceptional divisor.
Then the pair $(\tilde X,\tilde C)$ has only divisorial log terminal singularities (dlt) 
and the following relation holds
\begin{equation*}
K_{\tilde X}= \sigma^* K_X-\tilde C.
\end{equation*}
The extraction $\sigma: \tilde X\to X$ is called the \textit{dlt modification} of $(X\ni o)$.
\end{case}

\begin{scorollary}[see {\cite[\S 9]{Kawamata-1988-crep}}, {\cite[\S 3]{Utah}},
{\cite[\S 4.1]{Kollar-Mori-1988}}, {\cite[\S 6.1]{Prokhorov-2001}}]
\label{proposition-classification-lc-singularities}
In the above notation one of the following holds:
\begin{enumerate}
\item
$I=1$, $\tilde X=Y$ is smooth, and $(X\ni o)$ is either a simple elliptic
or a cusp singularity;
\item
$I=2$, $\tilde C=\sum_{i=1}^s \tilde C_i$ is a chain of smooth rational curves
meeting transversely at smooth points of $\tilde X$ so that $\tilde C_i\cdot \tilde C_{i+1}=1$, and 
the singular locus of $\tilde X$ consists of two Du Val points 
of type \type{A_1} lying on $\tilde C_1$ and two Du Val points 
of type \type{A_1} lying on $\tilde C_s$ \textup(the 
case $s=1$ is also possible and then $\tilde C=\tilde C_1$ 
is a smooth rational curve containing four Du Val points of type \type{A_1}\textup);
\item
$I=3$, $4$, or $6$, $\tilde C$ is a smooth rational curve, the pair $(\tilde X,\tilde C)$ 
has only purely log terminal singularities \textup(plt\textup), and the singular locus of $\tilde X$ consists of
three cyclic quotient singularities of types $\frac 1{r_i}(1, q_i)$, $\gcd(r_i, q_i)=1$ with
$\sum 1/r_i=1$. In this case $I=\lcm (r_1,r_2,r_3)$.
\end{enumerate}
\end{scorollary}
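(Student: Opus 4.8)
The plan is to deduce the statement from Kawamata's classification of the dual graphs $\Gamma(X,o)$ (Theorem~\ref{theorem-classification-lc-singularities}) by computing, for each graph, the discrepancies $a_i=a(E_i,X)$ of the components $E_i\subset Y$ of the exceptional divisor. By construction the dlt modification $\sigma$ of~\eqref{equation-min-resolution} contracts precisely the $E_i$ with $a_i>-1$, so $\tilde C=\tilde\eta_*E$ consists of exactly the components with $a_i=-1$; moreover $\tilde\eta\colon Y\to\tilde X$ is automatically the \emph{minimal} resolution of $\tilde X$, since every $\tilde\eta$-exceptional curve has self-intersection $\le-2$ in $Y$, so the singularities of $\tilde X$ arise by contracting the sub-chains of $\Gamma(X,o)$ spanned by the components with $a_i>-1$. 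In each case the $a_i$ are found from the linear system $\sum_j a_j(E_i\cdot E_j)=K_Y\cdot E_i$, whose right-hand side is computed by adjunction on each $E_i$ and whose solution is unique because the intersection matrix is negative definite.

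First I would treat $I=1$: by Theorem~\ref{theorem-classification-lc-singularities}, case~\ref{Theorem-simple-elliptic-cusp=I=1}, the singularity is simple elliptic or a cusp, so $\omega_E=\OOO_E$ and $(K_Y+E)\cdot E_i=0$ for every $i$; the linear system then forces $a_i=-1$ for all $i$, hence $\sigma$ contracts nothing, $\tilde X=Y$ is smooth, and we are in case~(i). For $I=2$ the graph is $[n_1,\dots,n_s;[2]^4]$, and for each pendant $(-2)$-curve $F$ one has $0=K_Y\cdot F=-2a_F+a_{E_1}$ (resp.\ $-2a_F+a_{E_s}$); solving the whole system yields $a_i=-1$ along the central chain $E_1,\dots,E_s$ and $a_F=-\tfrac12$ for the four pendant curves. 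Thus $\sigma$ contracts exactly those four $(-2)$-curves, each contraction producing a Du Val point of type~\type{A_1}: two of them lie on $\tilde C_1$ and two on $\tilde C_s$ (all four on $\tilde C_1$ when $s=1$), while the chain structure and the transversality $\tilde C_i\cdot\tilde C_{i+1}=1$ are inherited from $Y$. Combined with the dlt property of $(\tilde X,\tilde C)$ from~\eqref{equation-min-resolution} --- which is plt exactly when $\tilde C$ is irreducible, i.e.\ $s=1$ --- this gives case~(ii).

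For $I=3,4,6$ the graph $\langle n;r_1,r_2,r_3\rangle$ has a central vertex $E_0$ and three arms, the $i$-th arm being the chain $\langle r_i,q_i\rangle$; the same computation gives $a_0=-1$ and $-1<a_j<0$ on every arm, so $\sigma$ contracts precisely the three arms, producing three cyclic quotient singularities $\frac1{r_i}(1,q_i)$ lying on the smooth rational curve $\tilde C_0:=\tilde\eta_*E_0$, and $(\tilde X,\tilde C_0)$ is plt (dlt by~\eqref{equation-min-resolution}, hence plt since $\tilde C_0$ is irreducible). To determine $(r_1,r_2,r_3)$ and the index I would apply adjunction on $\tilde C_0\cong\PP^1$: as $\tilde C_0$ is $\sigma$-exceptional we get $\deg\bigl((K_{\tilde X}+\tilde C_0)|_{\tilde C_0}\bigr)=\deg\bigl(\sigma^{*}K_X|_{\tilde C_0}\bigr)=0$, whereas
\begin{equation*}
(K_{\tilde X}+\tilde C_0)\big|_{\tilde C_0}=K_{\tilde C_0}+\Diff_{\tilde C_0}(0),\qquad \deg\Diff_{\tilde C_0}(0)=\sum_{i=1}^{3}\Bigl(1-\tfrac1{r_i}\Bigr),
\end{equation*}
the $i$-th summand being the different at the point $\frac1{r_i}(1,q_i)$. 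Hence $0=-2+\sum(1-1/r_i)$, i.e.\ $\sum 1/r_i=1$, which leaves only $(r_1,r_2,r_3)\in\{(3,3,3),(2,4,4),(2,3,6)\}$, in accordance with Theorem~\ref{theorem-classification-lc-singularities}. Finally, on the presentation $\frac1{r_i}(1,q_i)=\CC^2/\mumu_{r_i}$ in which $\tilde C_0$ is the image of a coordinate axis, the rational $2$-form $dx\wedge dy/y$ representing $K_{\tilde X}+\tilde C_0=\sigma^{*}K_X$ is a $\mumu_{r_i}$-semi-invariant of weight $1$, so $\sigma^{*}K_X$ has index exactly $r_i$ there and is Cartier elsewhere along $\sigma^{-1}(o)$; since $\sigma$ does not change the index of $K_X$, we obtain $I=\lcm(r_1,r_2,r_3)$.

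The discrepancy computations and the different computation are routine bookkeeping once the graphs of Theorem~\ref{theorem-classification-lc-singularities} are at hand; the one point genuinely needing care is the last step --- that the index of $K_X$ at $o$ is unchanged under the pullback to $\tilde X$, so that it can be computed locally at the three quotient points. This is exactly where the results cited in the statement are used.
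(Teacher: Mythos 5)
The paper does not actually prove this statement: it is recorded as a corollary of Theorem~\ref{theorem-classification-lc-singularities} with citations to Kawamata, Koll\'ar et al.\ and the other listed references. Your derivation --- solving the linear system $\sum_j a_j(E_i\cdot E_j)=K_Y\cdot E_i$ on each dual graph to identify the discrepancy-$(-1)$ components, reading off the contracted sub-chains as the resulting quotient singularities of $\tilde X$, and then using adjunction with the different $\sum(1-1/r_i)$ on $\tilde C_0\cong\PP^1$ to force $\sum 1/r_i=1$ and $I=\lcm(r_i)$ --- is exactly the standard argument behind those references, and I find no gap in it; the only points you gloss (that the arm discrepancies lie strictly in $(-1,0)$, and that the index of $K_X$ at $o$ equals the l.c.m.\ of the local indices of $K_{\tilde X}+\tilde C$ along the fiber, which uses $R^1\sigma_*\OOO_{\tilde X}=0$ for $I\ge 2$) are routine and correctly flagged as such.
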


\begin{case}\label{index-one-cover}
Let $(X\ni o)$ be a log canonical singularity of index $I$
(of arbitrary dimension).
Recall (see e.g. \cite[Definition 5.19]{Kollar-Mori-1988}) 
that the \textit{index one cover} of $(X\ni o)$ is a 
finite morphism $\pi:X^\sharp\to X$, where 
\begin{equation*}
X^\sharp:=\operatorname{Spec}\left(\bigoplus _{i=0}^{I-1}\OOO_X(-iK_X)\right).
\end{equation*}
Then $X^\sharp$ is irreducible, $o^\sharp=\pi^{-1}(o)$ is one point,  
$\pi$ is \'etale over $X\setminus\Sing(X)$, and 
$K_{X^\sharp}=\pi^*K_X$ is Cartier.

In this situation, $(X^\sharp\ni o^\sharp)$ is a log canonical singularity of index $1$.
Moreover if $(X\ni o)$ is log terminal (resp. canonical, terminal), then 
so the singularity $(X^\sharp\ni o^\sharp)$ is.
\end{case}

\begin{scorollary}
A strictly log canonical surface singularity of index $I>1$ is a quotient 
of a simple elliptic or 
cusp singularity $(X^\sharp\ni o^\sharp)$ by a cyclic group $\mumu_I$
of order $I=2$, $3$, $4$ or $6$ whose action on $X^\sharp\setminus\{o^\sharp\}$ is free. 
\end{scorollary}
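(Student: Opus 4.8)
The plan is to combine Kawamata's classification in Theorem~\ref{theorem-classification-lc-singularities} with the elementary properties of the index one cover recalled in~\ref{index-one-cover}. Since $(X\ni o)$ is strictly log canonical of index $I>1$, parts (ii)--(v) of Theorem~\ref{theorem-classification-lc-singularities} show that $I\in\{2,3,4,6\}$. Form the index one cover $\pi\colon X^\sharp\to X$ of~\ref{index-one-cover}: $X^\sharp=\Spec\bigl(\bigoplus_{i=0}^{I-1}\OOO_X(-iK_X)\bigr)$ is irreducible and carries a faithful $\mumu_I$-action for which $\pi$ is the quotient morphism $X^\sharp\to X^\sharp/\mumu_I=X$ (faithfulness being forced by minimality of the index); moreover $o^\sharp=\pi^{-1}(o)$ is a single point, $\pi$ is \'etale over $X\setminus\Sing(X)$, and $(X^\sharp\ni o^\sharp)$ is a log canonical singularity of index $1$.

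First I would check that $\mumu_I$ acts freely on $X^\sharp\setminus\{o^\sharp\}$. As $(X\ni o)$ is a germ of a normal surface, its singular locus is the isolated point $o$, so $\pi$ is \'etale over all of $X\setminus\{o\}$; since $\pi$ is the $\mumu_I$-quotient and $\pi^{-1}(o)=\{o^\sharp\}$, this says precisely that every point of $X^\sharp\setminus\{o^\sharp\}$ has trivial stabilizer, i.e.\ the action there is free. (The same freeness is also visible from parts (ii)--(iii) of Corollary~\ref{proposition-classification-lc-singularities}, where the dlt modification of $(X\ni o)$ is already presented as a cyclic quotient of an explicit plt or dlt pair.) Next I would pin down the type of $(X^\sharp\ni o^\sharp)$: it has index $1$, so by part (i) of Theorem~\ref{theorem-classification-lc-singularities} it is simple elliptic or a cusp provided we know it is \emph{not} log terminal.

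Thus the heart of the argument is to exclude the possibility that $(X^\sharp\ni o^\sharp)$ is log terminal. If it were, then, having index $1$, it would be a canonical (Du Val) singularity, hence a quotient of a smooth germ by a finite group. Because $X=X^\sharp/\mumu_I$ and $\mumu_I$ acts freely away from $o^\sharp$, the singularity $(X\ni o)$ would then also be a finite quotient of a smooth germ, hence log terminal, contradicting the hypothesis that $(X\ni o)$ is strictly log canonical. Consequently $(X^\sharp\ni o^\sharp)$ is strictly log canonical of index $1$, i.e.\ simple elliptic or a cusp, and $\mumu_I$ has order $I\in\{2,3,4,6\}$ and acts freely on $X^\sharp\setminus\{o^\sharp\}$, as asserted. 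I expect this last step to be the one deserving a word of care: that a finite quotient of a Du Val (more generally, log terminal) surface germ by a group acting freely off the point is again log terminal --- which follows from the fact that two-dimensional log terminal singularities are exactly the quotient singularities, together with lifting the $\mumu_I$-action to the smooth cover of $X^\sharp$. Everything else is a direct application of the results already established above.
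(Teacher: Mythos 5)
Your proposal is correct and follows the route the paper intends: the corollary is stated as an immediate consequence of Theorem~\ref{theorem-classification-lc-singularities} (which forces $I\in\{2,3,4,6\}$) and the index one cover of~\ref{index-one-cover} (which gives the free cyclic quotient structure and that $(X^\sharp\ni o^\sharp)$ has index $1$), with no separate proof given. The one step you rightly flag --- that $(X^\sharp\ni o^\sharp)$ cannot be log terminal --- is handled correctly by your quotient-singularity argument, and can equivalently be seen from the standard fact that discrepancies, hence log terminality, descend along finite covers \'etale in codimension one (the converse direction of the last sentence of~\ref{index-one-cover}, cf.\ \cite[Prop.~5.20]{Kollar-Mori-1988}).
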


\begin{construction}[see {\cite[Proof of Theorem 9.6]{Kawamata-1988-crep}}]
\label{construction-index-one-cove-log-canonical}
Let $(X\ni o)$ be a strictly log canonical surface singularity of index $I>1$,
let $\pi: (X^\sharp\ni o^\sharp)\to (X\ni o)$ be the index one cover,
and let $\tilde \sigma: (\tilde X^\sharp\supset \tilde C^\sharp)\to (X^\sharp\ni o^\sharp)$ 
be the minimal resolution.
The action of $\mumu_I$ lifts to $\tilde X^{\sharp}$ so that 
the induced action on $\OOO_{\tilde X^{\sharp}}(K_{\tilde X^{\sharp}}+ \tilde C^{\sharp})=
\tilde \sigma^*\OOO_{X^\sharp}(K_{X^\sharp})$
and $H^0(\tilde C^{\sharp}, \OOO_{\tilde C^{\sharp}}(K_{\tilde C^{\sharp}}))$
is faithful. Let $(\tilde X\supset \tilde C):= (\tilde X^\sharp\supset \tilde C^\sharp)/\mumu_I$.
Thus we obtain the following diagram 
\begin{equation}
\label{equation-diagram-resolution}\vcenter{
\xy
\xymatrix{
\tilde X^{\sharp}\ar[r]^{\tilde \pi}\ar[d]^{\tilde \sigma}&\tilde X\ar[d]^{\sigma}
\\
X^{\sharp}\ar[r]^{\pi}&X
}
\endxy}
\end{equation}
Here $\sigma: (\tilde X\supset \tilde C)\to (X\ni o)$
is the dlt modification.
\end{construction}

The following definition can be given in arbitrary dimension.
For simplicity we state it only for dimension two which is 
sufficient for our needs.
\begin{case}{\bf Adjunction.}
\label{different}
Let $X$ be a normal surface and $D$ be an effective $\QQ$-divisor on $X$. 
Write $D=C+B$, where $C$ is a reduced divisor on $X$, $B$ is effective,
and $C$ and $B$ have no common component.
Let $\nu:C'\to C$ be the normalization of $C$.
One can construct an effective $\QQ$-divisor $\Diff_C(B)$
on $C'$, called the \textit{different}, as follows;
see \cite[Chap. 16]{Utah} or \cite[\S 3]{Shokurov-1992-e-ba} for details. Take
a resolution of singularities $f:X'\to X$ such that the
proper transform $C'$ of $C$ on $X'$ is also smooth. 
Clearly, $C'$ is nothing but the normalization of the curve $C$.
Let $B'$ be the proper transform of $B$ on $X'$.
One can find an
exceptional $\QQ$-divisor $A$ on $X'$ such that $K_{X'}+C'+B'\equiv_fA$. The different
$\Diff_C(B)$ is defined as the $\QQ$-divisor $(B'-A)|_{ C'}$. Then $\Diff_C(B)$ is effective 
and it satisfies the equality (adjunction formula)
\begin{equation}
\label{equation-adjunction}
K_{C'}+\Diff_C(B)=\nu^* (K_X+C+B)|_C. 
\end{equation}
\end{case}

\begin{stheorem}[Inversion of Adjunction {\cite{Shokurov-1992-e-ba}}, {\cite{Kawakita2007}}]
\label{Inversion-of-Adjunction}
The pair $(X, C+B)$ is lc \textup(resp. plt\textup) near $C$ if and only if
the pair $(C',\Diff_C(B))$ is lc \textup(resp. klt\textup).
\end{stheorem}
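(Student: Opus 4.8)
The plan is to establish the two implications separately: the forward one is a direct computation on a log resolution, and the substantive content lies in the ``inversion'' implication. Since the statement is local on $X$ near $C$, fix a point $q\in C$. I would choose a log resolution $f\colon X'\to X$ of $(X,C+B)$ for which the proper transform $\overline{C}$ of $C$ is smooth; then $\overline{C}$ is canonically the normalization $C'$, so $f|_{\overline{C}}=\nu$, and $\overline{C}+\overline{B}+\sum_i E_i$ is simple normal crossing, where $\overline{B}$ is the proper transform of $B$ and the $E_i$ are the $f$-exceptional prime divisors. Writing $K_{X'}+\overline{C}+\overline{B}+\sum_i a_iE_i\equiv_f 0$ with $a_i\in\QQ$, restricting to the smooth curve $\overline{C}$, and using adjunction on the smooth surface $X'$ (where, by the normal crossing property, the different on $\overline{C}$ is just the restriction of the ambient boundary), the definition of $\Diff_C(B)$ in~\ref{different} together with \eqref{equation-adjunction} gives
\begin{equation*}
\Diff_C(B)=\bigl(\overline{B}+\sum_i a_iE_i\bigr)\big|_{\overline{C}}\qquad\text{on }\overline{C}=C'.
\end{equation*}
As the total transform is normal crossing, through each point of $\overline{C}$ there passes at most one further component, met transversally, so every coefficient of $\Diff_C(B)$ is $0$, a single coefficient of $\overline{B}$, or a single $a_i$. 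Hence, if $(X,C+B)$ is lc (resp.\ plt) near $C$, all of these are $\le 1$ (resp.\ $<1$: a coefficient $1$ would correspond to an $f$-exceptional divisor of discrepancy $-1$ whose centre is a point of $C$, or to a component of $\lfloor B\rfloor$ meeting $C$, both excluded by plt near $C$). This proves the forward implication and reduces its converse to bounding the $a_i$ for those $E_i$ whose centre passes through $q$.

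For the plt/klt case of the converse I would invoke the Koll\'ar--Shokurov connectedness principle \cite{Shokurov-1992-e-ba,Kollar-Mori-1988}. Since $-(K_{X'}+\overline{C}+\overline{B}+\sum_i a_iE_i)\equiv_f 0$ is $f$-nef, the locus $Z\subset X'$ where the boundary $\overline{C}+\overline{B}+\sum_i a_iE_i$ has coefficient $\ge 1$ meets each fibre $f^{-1}(q)$ in a connected set. Assume $(C',\Diff_C(B))$ is klt but $(X,C+B)$ is not plt near $C$. After refining $f$, there is then an $f$-exceptional $E_{i_0}$ with $f(E_{i_0})\ni q$ for some $q\in C$ and $a_{i_0}\ge 1$ --- with $a_{i_0}>1$ if $(X,C+B)$ is even non-lc near $C$, and $a_{i_0}=1$ if it merely has a stray lc centre through $C$. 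Both $\overline{C}$ and $E_{i_0}$ lie in $Z$ and meet $f^{-1}(q)$, so $Z\cap f^{-1}(q)$ contains a chain joining them; the link of this chain incident to $\overline{C}$ has coefficient $\ge 1$ and therefore contributes a coefficient $\ge 1$ to $\Diff_C(B)$, contradicting klt of $(C',\Diff_C(B))$. (If instead $B$ itself has a component of coefficient $>1$ passing through $q$, then $\Diff_C(B)$ has a coefficient $>1$ outright.)

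The remaining, and genuinely delicate, point is the lc case of the converse: from $(C',\Diff_C(B))$ lc one must deduce that $(X,C+B)$ is lc near $C$, yet the connectedness argument only produces a coefficient $\ge 1$ in $\Diff_C(B)$, which is consistent with lc and does not close the loop. What must still be excluded is a ``floating'' divisor $E_{i_0}$ with $a_{i_0}>1$ over $q$ whose connecting chain to $\overline{C}$ meets $\overline{C}$ along a coefficient-$1$ component. I would settle this by quoting Kawakita's theorem \cite{Kawakita2007}, which is precisely the lc form of inversion of adjunction in arbitrary dimension --- this is why the statement attributes the result to both \cite{Shokurov-1992-e-ba} and \cite{Kawakita2007}. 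As only surfaces are needed here, an alternative is a direct discrepancy computation on the minimal resolution, combining the classification of strictly log canonical surface singularities (Theorem~\ref{theorem-classification-lc-singularities} and Corollary~\ref{proposition-classification-lc-singularities}) with the negative-definiteness of the exceptional intersection form to rule out the ``floating'' configuration along the relevant chains of rational curves; but this is a case-by-case verification I would rather cite than carry out. I expect this last step to be the main obstacle; everything else is bookkeeping on a log resolution.
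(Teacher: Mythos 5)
The paper gives no proof of this statement---it is quoted from \cite{Shokurov-1992-e-ba} (the plt case) and \cite{Kawakita2007} (the lc case)---so your proposal can only be measured against those references rather than against an argument in the text. Your sketch is correct where it is explicit (the computation of $\Diff_C(B)$ on a log resolution and the connectedness argument for the plt/klt direction constitute the standard proof, cf. \cite[Theorem 5.50]{Kollar-Mori-1988}), and you rightly isolate the lc direction as the step genuinely requiring \cite{Kawakita2007} (or, for surfaces, a separate classification argument), which is precisely how the paper itself disposes of the whole theorem: by citation.
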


\begin{sproposition}
Let $(X\ni P)$ be a surface singularity and let $o\in C \subset X$ be
an effective reduced divisor such that the pair $(X,C)$ is plt.
Then $(P\in C \subset X)$ is analytically isomorphic to
\[
\left(0\in \{x_1-axis\}\subset \CC^2\right)/\mumu_r(1,q),\qquad \gcd(r,q)=1.
\]
In particular, $C$ is smooth at $P$ and $\Diff_C(0)=(1-1/r)P$.
The dual graph of the minimal resolution of $(X\ni P)$ is a chain
\eqref{equation-chain} and the proper transform of $C$ is attached to 
one of its ends.
\end{sproposition}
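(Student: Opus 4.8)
The plan is to realize $(X\ni P)$ as a cyclic quotient of a smooth surface germ and then read off all the assertions from this presentation.

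\emph{Reduction to a quotient singularity.} Since $(X,C)$ is plt, the variety $X$ itself is log terminal (klt): for $0<\epsilon\ll 1$ the pair $(X,(1-\epsilon)C)$ is klt, whence $a(E,X,0)\ge a(E,X,(1-\epsilon)C)>-1$ for every divisor $E$ over $X$. By the structure theorem for log terminal surface singularities (see e.g. \cite[\S4.1]{Kollar-Mori-1988}), $(X\ni P)$ is then a quotient singularity: there is a finite morphism $\pi\colon(\CC^2,0)\to(X\ni P)$, \'etale in codimension one, with $X\cong\CC^2/G$ for a finite \emph{small} subgroup $G\subset\mathrm{GL}(2,\CC)$ (one with no pseudo-reflections). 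Being \'etale in codimension one, $\pi$ satisfies $K_{\CC^2}=\pi^*K_X$; moreover no component of $\pi^{-1}(C)$ lies in the finite branch locus, so $\pi^*C=\widetilde C$, where $\widetilde C:=\pi^{-1}(C)$ is reduced and $G$-invariant. Hence
\begin{equation*}
K_{\CC^2}+\widetilde C=\pi^*(K_X+C),
\end{equation*}
and since a crepant finite pull-back does not decrease $1+(\text{discrepancy})$, the pair $(\CC^2,\widetilde C)$ is again plt.

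\emph{The action is diagonal.} A singular point $q$ of $\widetilde C$ would give, after blowing up $q$ on the smooth surface $\CC^2$, an exceptional curve of discrepancy $1-\mult_q(\widetilde C)\le -1$, contradicting plt; thus $\widetilde C$ is smooth. After an equivariant analytic change of coordinates --- linearizing the $G$-action by Cartan's lemma and simultaneously straightening the smooth invariant curve $\widetilde C$ to its tangent line --- we may assume $G\subset\mathrm{GL}(2,\CC)$ acts linearly and $\widetilde C=\{x_2=0\}$ is the $x_1$-axis. Restriction to this axis gives a homomorphism $G\to\CC^{*}$, whose kernel fixes the axis pointwise; by smallness this kernel is trivial, so $G\hookrightarrow\CC^{*}$ is cyclic, $G=\mumu_r$. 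Choosing a generator $g$ and replacing $x_2$ by a $g$-eigenvector we may take $g=\mathrm{diag}(\zeta_r,\zeta_r^{q})$ with $\zeta_r$ a primitive $r$-th root of unity and $\widetilde C=\{x_2=0\}$ unchanged; smallness (no $g^k$ with $0<k<r$ is a pseudo-reflection) forces $\gcd(r,q)=1$, and after replacing $g$ by a power we normalize $\zeta_r=\exp(2\pi i/r)$. This yields the asserted isomorphism
\begin{equation*}
(P\in C\subset X)\;\cong\;\bigl(0\in\{x_1\text{-axis}\}\subset\CC^2\bigr)/\mumu_r(1,q),\qquad\gcd(r,q)=1,
\end{equation*}
and since $\mumu_r$ acts on the $x_1$-axis through $\zeta_r$, the quotient $C\cong\CC/\mumu_r\cong\CC$ is smooth at $P$.

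\emph{Different and dual graph.} Restricting $K_{\CC^2}+\widetilde C=\pi^*(K_X+C)$ to $\widetilde C$ and using ordinary adjunction on the smooth surface $\CC^2$ (so that $\Diff_{\widetilde C}(0)=0$), the adjunction formula \eqref{equation-adjunction} gives $K_{\widetilde C}=\pi_{\widetilde C}^{*}\bigl(K_C+\Diff_C(0)\bigr)$, where $\pi_{\widetilde C}\colon\widetilde C\to C$ is $x_1\mapsto x_1^{r}$, totally ramified over $P$ with index $r$. Comparing this with $K_{\widetilde C}=\pi_{\widetilde C}^{*}K_C+(r-1)[0]$ and $\pi_{\widetilde C}^{*}[P]=r[0]$ gives $\Diff_C(0)=(1-1/r)P$. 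Finally $(X\ni P)=\CC^2/\mumu_r(1,q)$ is the cyclic quotient singularity of type $\frac1r(1,q)$, whose minimal (Hirzebruch--Jung) resolution \cite{Brieskorn-1967-1968} has dual graph the chain $[n_1,\dots,n_k]=\langle r,q\rangle$ of \eqref{equation-chain}; this resolution is toric, with the proper transforms of the two torus-invariant curves of $X$ (the images of the two coordinate axes of $\CC^2$) meeting the two end vertices of the chain, so the proper transform of $C$ is attached to one of the ends.

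\emph{Main obstacle.} The argument is essentially an assembly of standard facts --- the structure theorem for klt surface singularities, monotonicity of discrepancies under a crepant finite pull-back, and the toric description of cyclic quotient surface singularities. The one step that genuinely needs care is the equivariant coordinate change putting $\widetilde C$ into linear normal form \emph{simultaneously} with linearizing the $G$-action.
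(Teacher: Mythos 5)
The paper states this proposition without proof --- it is quoted as a standard structural fact (see e.g.\ \cite[16.6]{Utah} or \cite[Prop.~5.20 and \S 4.1]{Kollar-Mori-1988}) --- so there is no in-paper argument to compare against; your write-up supplies the standard proof, and it is correct. The chain of reductions (plt $\Rightarrow$ $X$ klt $\Rightarrow$ quotient $\CC^2/G$ with $G$ small, crepant pull-back of the pair, smoothness of $\widetilde C$ from the blow-up discrepancy $1-\mult_q\widetilde C$, faithfulness of the restricted action via smallness, the Hurwitz computation of the different, and the Hirzebruch--Jung description of the resolution) is exactly how this is proved in the literature. Two small points: the eigenvector adjustment is stated backwards --- since $\{x_2=0\}$ is $g$-invariant, $x_2$ is automatically a $g^*$-eigenvector and it is $x_1$ that must be replaced by an eigenvector of the complementary eigenline (which does not move the curve); and the ``simultaneous'' linearization you flag can be finished cleanly in the order: linearize $G$ by Cartan's averaging, diagonalize $g$ with the invariant tangent line as $\{x_2=0\}$, write $\widetilde C=\{x_2=f(x_1)\}$ with $\operatorname{ord}f\ge 2$, observe that invariance of $\widetilde C$ forces $f$ to be a semi-invariant of the same weight as $x_2$, and then apply the equivariant change $x_2\mapsto x_2-f(x_1)$. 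With those clarifications the proof is complete.
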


\section{$\QQ$-Gorenstein smoothings of log canonical singularities}
\label{sect-smoothings}
In this section we prove the classificational part of Theorem~\ref{Theorem-Q-smoothings}.
\begin{notation}
Let $(X\ni P)$ be a normal surface singularity, let $\eta:Y\to X$ be the minimal resolution and
let $E=\sum E_i$ be the exceptional divisor. Write 
\begin{equation}
\label{equation-codiscrepancy}
K_{Y}=\eta ^*K_X-\Delta,
\end{equation}
where $\Delta$ is an effective $\QQ$-divisor with $\Supp(\Delta)=\Supp(E)$.
Thus one can define the self-intersection $\K_{(X,P)}^2:=\Delta^2$ which is a well-defined natural invariant. 
We usually write $\K^2$ instead of $\K_{(X,P)}^2$ if no confusion is likely.
The value $\K^2$ is non-positive and it equals zero if and only if $(X\ni P)$ is a Du Val point.
\begin{itemize}
 \item We denote by $\varsigma_P$ the number of exceptional divisors over $P$.
\end{itemize}
\end{notation}

\begin{lemma}
\label{lemma-canonical}
Let $(X\ni P)$ be a normal surface singularity and let 
$\XX\to \XD$ be its $\QQ$-Gorenstein smoothing.
If $(X\ni P)$ is log terminal,
then the pair $(\XX,X)$ is plt and the singularity $(\XX\ni P)$ 
is terminal.

If $(X\ni P)$ is log canonical,
then the pair $(\XX,X)$ is lc and the singularity $(\XX\ni P)$ 
is isolated canonical.
\end{lemma}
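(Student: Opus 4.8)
The plan is to treat the two assertions in parallel, using inversion of adjunction applied to the pair $(\XX, X)$ on the threefold total space $\XX$. Since the smoothing is $\QQ$-Gorenstein, $\XX$ is normal and $K_{\XX}$ is $\QQ$-Cartier, and the central fiber $X=\XX_0$ is a Cartier divisor on $\XX$ (it is a fiber of $\XX\to\XD$ over the smooth curve $\XD$). The adjunction formula gives $(K_{\XX}+X)|_X = K_X$, so the different of the pair $(\XX,X)$ along $X$ is zero. By inversion of adjunction for the divisor $X\subset\XX$ (in the form of \cite{Kawakita2007}, which covers the higher-dimensional case needed here), the pair $(\XX, X)$ is plt (resp.\ lc) in a neighborhood of $X$ if and only if $X$ itself is klt (resp.\ lc). In the log terminal case this immediately yields that $(\XX, X)$ is plt near $X$; in the log canonical case it yields that $(\XX, X)$ is lc near $X$. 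Since $(\XX, X)$ being plt near $X$ forces $\XX$ itself to be klt near $X$, and $(\XX, X)$ being lc near $X$ forces $\XX$ to be lc (in fact klt away from $X$, since the general fiber is smooth) near $X$, we already get part of what we want.

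Next I would upgrade these statements to the claims about the singularity $(\XX\ni P)$. For the log terminal case: $\XX$ is klt, and away from the finitely many singular points of $X$ the total space $\XX$ is smooth (the general fiber is smooth and $X\setminus\Sing X$ is smooth, so $\XX$ is smooth along $X\setminus\Sing X$ by the local structure of a $\QQ$-Gorenstein smoothing), hence $\XX$ has isolated singularities. A klt threefold singularity through which there passes a klt Cartier divisor ($X$) whose complement ($\XX\setminus X$, the union of the smooth general fibers) is smooth is terminal: indeed, if there were a crepant or log-discrepancy-$\le 1$ divisor over $(\XX\ni P)$, its center would have to meet $X$, and restricting to $X$ via adjunction would contradict $X$ being klt (one uses that $K_{\XX}+X$ restricts to $K_X$ with zero different, so discrepancies transfer). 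Alternatively, and more cleanly, one invokes the standard fact that a $\QQ$-Gorenstein one-parameter smoothing of a klt surface singularity has terminal total space — this is precisely the setup of \cite{Kollar-ShB-1988} and the $T$-singularity circle of ideas; I would cite that rather than reprove it. For the log canonical case: we have shown $\XX$ is lc with $X$ a Cartier lc divisor through $P$. The singularity $(\XX\ni P)$ is isolated for the same reason as above ($\XX$ is smooth along $X\setminus\Sing X$ and the general fibers are smooth). It remains to see $(\XX\ni P)$ is canonical, not merely lc: here one uses that $X$ is a Cartier divisor with $(K_{\XX}+X)|_X=K_X$, so by adjunction any exceptional divisor $E$ over $\XX$ with $a(E,\XX)<0$ would, if its center meets $X$, produce via \cite{Kawakita2007} / restriction a boundary contribution making $(X,0)$ non-klt, i.e.\ strictly lc — which is fine — but an exceptional divisor with $a(E,\XX)<0$ whose center is contained in $X$ is impossible when $X$ is Cartier and $(\XX,X)$ is lc, since then $a(E,\XX)=a(E,\XX,X)+\mult_E X\ge -1+1=0$. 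Exceptional divisors whose center meets $\XX\setminus X$ have center through a smooth point, so contribute $a\ge 0$ trivially. Hence $(\XX\ni P)$ is canonical.

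The main obstacle I expect is the careful bookkeeping of discrepancies across adjunction in the threefold setting — in particular making sure inversion of adjunction is applied in the correct generality (a Cartier, hence $\QQ$-Cartier, divisor $X$ on the normal threefold $\XX$, with the pair $(\XX,X)$ and trivial different), and making sure that the passage from "$(\XX,X)$ plt/lc near $X$'' to "$(\XX\ni P)$ terminal/canonical'' uses only the Cartier-ness of $X$ and the smoothness of $\XX$ along a punctured neighborhood. The inequality $a(E,\XX)\ge a(E,\XX,X)+\mult_E(X) \ge -1 + 1 = 0$ for divisors $E$ centered in $X$ is the crux of the canonical (resp.\ terminal, using plt) conclusion, and I would state it as a short lemma or inline the computation; everything else is assembling standard facts about $\QQ$-Gorenstein smoothings and citing Theorem \ref{Inversion-of-Adjunction}.
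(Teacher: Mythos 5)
Your proposal is correct and follows essentially the same route as the paper: apply (the higher-dimensional) inversion of adjunction to conclude that $(\XX,X)$ is plt (resp.\ lc), and then use the fact that $X$ is Cartier on $\XX$, so that $a(E,\XX)=a(E,\XX,X)+\mult_E X\ge -1+1$ (with strict inequality in the plt case) for any divisor centered in $X$, which gives terminality (resp.\ canonicity); the paper states this in two sentences and you have merely filled in the routine details (isolatedness from smoothness of $\XX$ along $X\setminus\Sing X$ and of the nearby fibers).
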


\begin{proof}
By the higher-dimensional version of the inversion of adjunction (see \cite[Th. 5.50]{Kollar-Mori-1988},
\cite{Kawakita2007} and Theorem~\ref{Inversion-of-Adjunction}) 
the singularity $(X\ni P)$ is log terminal \textup(resp. log canonical\textup)
if and only if the pair $(\XX,X)$ is plt \textup(resp. lc\textup) at $P$.
Since $X$ is a Cartier divisor on $\XX$, the assertion follows. 
\end{proof}

\begin{lemma}[{\cite[Proposition 6.2.8]{Kollar1991a}}]
\label{lemma-K2-integer}
Let $(X\ni P)$ be a rational surface singularity.
If $(X\ni P)$ admits a $\QQ$-Gorenstein smoothing, then $\K^2$ is an integer.
\end{lemma}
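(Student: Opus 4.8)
The plan is to estimate $\K^{2}$ from inside a resolution of the threefold $\XX$, exploiting that the central fibre $X$ sits in $\XX$ as a Cartier divisor. Since $X=\XX_0$ is the zero locus of the parameter $t$ on $\XD$, it is a Cartier divisor on the normal $\QQ$-Gorenstein threefold $\XX$, so $K_X=(K_\XX+X)|_X$ is $\QQ$-Cartier and the divisor $\Delta$ of \eqref{equation-codiscrepancy} is defined. First I would pick a good resolution $g\colon\XX'\to\XX$ over a neighbourhood of $P$ such that the proper transform $X':=g^{-1}_{*}X$ is smooth and $X'+\sum_jF_j$ has simple normal crossings, where the $F_j$ are the prime divisors contracted by $g$; then $\eta':=g|_{X'}\colon X'\to X$ is a resolution of $(X\ni P)$ and hence factors through the minimal one as $\eta'=\eta\circ h$ with $h\colon X'\to Y$. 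Writing $g^{*}X=X'+M$ with $M=\sum_j m_jF_j$, $m_j\in\ZZ_{>0}$, and putting $A:=K_{\XX'}-g^{*}K_\XX$, adjunction for the Cartier divisors $X\subset\XX$ and $X'\subset\XX'$ gives $(\eta')^{*}K_X=\bigl(g^{*}(K_\XX+X)\bigr)|_{X'}$ and $K_{X'}=(K_{\XX'}+X')|_{X'}$, so that the discrepancy divisor of $\eta'$ is
\[
\Delta':=(\eta')^{*}K_X-K_{X'}=(M-A)|_{X'}.
\]

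Next I would descend from $X'$ to $Y$. Write $K_{X'}=h^{*}K_Y+E_h$; since $h$ is a composition of blow-ups of points of smooth surfaces, $E_h$ is effective, integral, and supported on the compact exceptional locus of $h$, so $E_h^{2}\in\ZZ$. Because $\Delta'=h^{*}\Delta-E_h$, and $(h^{*}\Delta)^{2}=\Delta^{2}$, $h^{*}\Delta\cdot E_h=0$ by the projection formula, one gets $\K^{2}=\Delta^{2}=(\Delta')^{2}-E_h^{2}$, so it remains to show $(\Delta')^{2}\in\ZZ$. For this I would compute on $\XX'$. As $g_{*}(M-A)=g_{*}\bigl(g^{*}(K_\XX+X)-(K_{\XX'}+X')\bigr)=0$, the divisor $M-A$ is $g$-exceptional, i.e.\ supported on the compact set $\bigcup_jF_j$, so all the intersection numbers below are those of compactly supported cycles; moreover the $1$-cycle $(M-A)^{2}$ is contracted by $g$, whence $(M-A)^{2}\cdot g^{*}X=0$ and
\[
(\Delta')^{2}=(M-A)^{2}\cdot X'=(M-A)^{2}\cdot(g^{*}X-M)=-\sum_j m_j\bigl((M-A)|_{F_j}\bigr)^{2}.
\]

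Finally I would check that each $\bigl((M-A)|_{F_j}\bigr)^{2}$ is an integer. On the smooth projective surface $F_j$, which is contracted by $g$, the pullback $g^{*}(K_\XX+X)$ restricts numerically trivially — a $\QQ$-Cartier divisor on the germ $(\XX\ni P)$ is $\QQ$-linearly trivial — so $(M-A)|_{F_j}\equiv-(K_{\XX'}+X')|_{F_j}$; and by adjunction on $F_j\subset\XX'$ one has $(K_{\XX'}+X')|_{F_j}=K_{F_j}-F_j|_{F_j}+X'|_{F_j}$, an integral divisor class on the smooth surface $F_j$. Hence $\bigl((M-A)|_{F_j}\bigr)^{2}\in\ZZ$, so $(\Delta')^{2}\in\ZZ$ and $\K^{2}=(\Delta')^{2}-E_h^{2}\in\ZZ$. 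The step that will need the most care is bookkeeping: as everything takes place over the germ $(\XX\ni P)$ one must verify that every intersection number written above is that of a compactly supported cycle and that the uses of the projection formula are legitimate (one may instead compactify $\XX$ and work projectively); the sole geometric input beyond this is the triviality of $\Pic$ of the germ, which is what makes $g^{*}(K_\XX+X)$ numerically trivial along each contracted $F_j$.
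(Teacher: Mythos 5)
The paper offers no proof of this lemma at all---it is imported verbatim from \cite[Proposition 6.2.8]{Kollar1991a}---so the comparison is with the standard published route rather than with an internal argument. Your proof is correct and, as far as I can see, complete. It is also genuinely different in method: the usual proof (and the closely related Proposition \ref{proposition-computation-muP} of this paper) globalizes the germ via Looijenga's appendix, uses that $K^2$ of the fibres is constant in a projective $\QQ$-Gorenstein family, and concludes from Noether's formula $K_Y^2=K_Z^2+\Delta^2$; you instead work purely locally on a log resolution of the total space, reducing $\Delta^2$ modulo integers to the self-intersections of the integral classes $(K_{\XX'}+X')|_{F_j}$ on the exceptional surfaces. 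What your route buys is locality (no globalization or deformation-theoretic input beyond the existence of $\XX$) and, incidentally, generality: you never use rationality of $(X\ni P)$, only that $K_{\XX}$ is $\QQ$-Cartier and $X$ is a normal Cartier divisor on $\XX$; this is consistent with the index-one log canonical cases (simple elliptic and cusp singularities), whose $\K^2$ is indeed integral. The bookkeeping issue you flag resolves itself: since $X$ has an isolated singularity at $P$, is Cartier in $\XX$, and the nearby fibres are smooth, $\XX$ is smooth away from $P$ after shrinking, so $g$ may be taken to be an isomorphism off $g^{-1}(P)$ and every $F_j$ is a complete smooth surface; all the intersection numbers you write are then those of cycles proper over a point, and the projection-formula steps are legitimate.
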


\begin{theorem}[{\cite[Proposition 3.10]{Kollar-ShB-1988}}, 
{\cite[Proposition 5.9]{Looijenga-Wahl-1986}}]
\label{classification-T-singularities}
Let $(X\ni P)$ be a log terminal surface singularity.
The following are equivalent:
\begin{enumerate}
\item 
$(X\ni P)$ admits a $\QQ$-Gorenstein smoothing,
\item 
$\K^2\in\ZZ$,
\item 
\label{classification-T-singularities-3}
$(X\ni P)$
is either Du Val or a cyclic quotient singularity of the form $\frac{1}{m}(q_1,q_2)$
with 
\begin{equation*}
(q_1+q_2)^2\equiv 0\mod m,\qquad \gcd(m,q_i)=1.
\end{equation*}
\end{enumerate}
\end{theorem}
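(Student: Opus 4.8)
The plan is to prove the cycle of implications (i)$\Rightarrow$(ii)$\Rightarrow$(iii)$\Rightarrow$(i). The first is immediate: a log terminal surface singularity is a quotient singularity, hence rational, so Lemma~\ref{lemma-K2-integer} applies and $\K^2\in\ZZ$.

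For (iii)$\Rightarrow$(i) I would exhibit the smoothings directly. A Du Val singularity is an $\mathrm{ADE}$ hypersurface singularity, and its versal deformation is a Gorenstein (hence $\QQ$-Gorenstein) smoothing. Suppose instead $(X\ni P)=\frac1m(q_1,q_2)$ with $\gcd(m,q_i)=1$ and $(q_1+q_2)^2\equiv0\pmod m$. Passing to $\frac1m(1,q)$, $q\equiv q_1^{-1}q_2$, leaves $m\mid(q_1+q_2)^2$ unchanged since $\gcd(q_1,m)=1$, so $m\mid(q+1)^2$; a short computation with $p$-adic valuations then shows that either $q\equiv m-1\pmod m$ (and $(X\ni P)$ is the Du Val point $\rA_{m-1}$, already handled) or $m=dn^2$ and $q\equiv dna-1\pmod m$ with $\gcd(a,n)=1$ and $n\ge2$. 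In the latter case $(X\ni P)$ is the quotient of the Du Val singularity $\rA_{dn-1}=\{uv=w^{dn}\}\subset\CC^3$ by the $\mumu_n$-action $(u,v,w)\mapsto(\zeta u,\zeta^{-1}v,\zeta^aw)$, $\zeta$ a primitive $n$-th root of unity, the condition $\gcd(a,n)=1$ guaranteeing that this quotient is indeed the required singularity. The family $\XX:=\{uv=w^{dn}+t\}\subset\CC^3\times\XD$ is then a $\mumu_n$-equivariant smoothing of $\rA_{dn-1}$ with smooth total space; for $t\neq0$ the fibre $\XX_t$ is smooth and, since $\gcd(a,n)=1$, the $\mumu_n$-action on it is free. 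Hence $\XX/\mumu_n\to\XD$ is a flat family with special fibre $X$, smooth general fibre, and $\QQ$-Gorenstein total space (the quotient of a smooth variety by a finite group) --- a $\QQ$-Gorenstein smoothing of $(X\ni P)$.

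The implication (ii)$\Rightarrow$(iii) is the combinatorial core. One invokes the classification of log terminal surface singularities as quotients $\CC^2/G$ with $G\subset\operatorname{GL}_2(\CC)$ finite without quasi-reflections, together with the shape of the dual graph of the minimal resolution: a chain $[b_1,\dots,b_k]$ in the cyclic case, and a graph with a single valency-three fork in the non-cyclic (dihedral/tetrahedral/octahedral/icosahedral) cases. Writing $K_Y=\eta^*K_X-\Delta$ with $\Delta=\sum a_iE_i$, the coefficients $a_i$ solve the linear system $\Delta\cdot E_i=-K_Y\cdot E_i=2-b_i$ (where $E_i^2=-b_i$ and $K_Y\cdot E_i=b_i-2$ by adjunction on $E_i\cong\PP^1$), whence $\K^2=\Delta^2=(2\mathbf{1}-\mathbf{b})^{\mathsf T}M^{-1}(2\mathbf{1}-\mathbf{b})$ with $M=(E_i\cdot E_j)$; in particular $|\det M|\cdot\K^2\in\ZZ$. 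For the forked graphs a short finite computation shows that the fork forces a denominator which does not cancel, so $\K^2\notin\ZZ$ unless $(X\ni P)$ is Du Val ($\K^2=0$). For a cyclic quotient, reduced to $\frac1m(1,q)$ with continued fraction $\frac mq=[b_1,\dots,b_k]$, one proves $\K^2\in\ZZ\iff m\mid(q+1)^2$ by induction on $k$, using the standard reduction of a cyclic quotient to one with a shorter continued fraction (under which $\K^2$ and the residue $(q+1)^2\bmod m$ transform compatibly) and checking the base cases directly. Combining, the log terminal singularities with $\K^2\in\ZZ$ are exactly those listed in (iii).

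The step I expect to be the main obstacle is this cyclic induction inside (ii)$\Rightarrow$(iii): one must isolate invariants of the resolution chain that behave transparently under the reduction move, so that the properties ``$\K^2\in\ZZ$'' and ``$m\mid(q+1)^2$'' visibly travel together; in parallel one must carry out the genuinely case-by-case verification that the forked graphs never have integral $\K^2$ unless the singularity is Du Val, and the elementary number theory converting $(q_1+q_2)^2\equiv0\pmod m$ into the normal form $\frac1{dn^2}(1,dna-1)$ used in the construction. The remaining ingredients --- the inversion of adjunction behind Lemma~\ref{lemma-K2-integer} and the equivariant-quotient construction of the smoothings --- are by comparison routine.
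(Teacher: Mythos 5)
This theorem is stated in the paper as a quoted result, with references to \cite[Proposition 3.10]{Kollar-ShB-1988} and \cite[Proposition 5.9]{Looijenga-Wahl-1986}; the paper contains no proof of it, so there is nothing internal to compare against. Your outline is essentially the standard proof from those sources. The implications (i)$\Rightarrow$(ii) (via Lemma~\ref{lemma-K2-integer}, since log terminal surface singularities are quotient, hence rational) and (iii)$\Rightarrow$(i) are correct and essentially complete: your reduction of the congruence $(q_1+q_2)^2\equiv 0\pmod m$ to the normal form $\frac1{dn^2}(1,dna-1)$ is right, and the $\mumu_n$-equivariant smoothing $\{uv=w^{dn}+t\}$ of $\rA_{dn-1}$, with free action on the nearby fibers because $\gcd(a,n)=1$, is exactly the classical construction.

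The genuine gap is that (ii)$\Rightarrow$(iii) is a plan rather than a proof, and one assertion in the plan is not accurate as stated: the verification that forked dual graphs never have integral $\K^2$ (outside the Du Val case) is \emph{not} a finite check, because the dihedral graphs form an infinite family (a chain of arbitrary length with a fork of two $(-2)$-vertices at one end), so you need a closed-form computation of $\K^2$ for these graphs, or a structural argument (e.g.\ via the canonical cover, which for a non-cyclic non--Du Val log terminal point is a cyclic singularity whose own $\K^2$ one can control), rather than a case list. Likewise the cyclic induction ``$\K^2\in\ZZ\iff m\mid(q+1)^2$'' is only asserted; it is true and provable by the reduction moves $[b_1,\dots,b_k]\mapsto[b_1+1,b_2,\dots,b_k,2]$ and their inverses (under which both sides of the equivalence transform compatibly, with base cases $[4]$ and $[3,2,\dots,2,3]$), but the compatibility computation is precisely the content of the cited propositions and must be carried out. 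With those two computations supplied, the argument is complete and coincides with the literature proof.
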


A log terminal singularity satisfying equivalent conditions above is called a
\textit{$\rT$-singularity}. 

\begin{sremark}[see {\cite{Kollar-ShB-1988}}]
It easily follows from~\ref{classification-T-singularities-3} that
any non-Du Val singularity of type $\rT$ can be written in the form 
\begin{equation*}\textstyle
\frac1{dn^2}(1, dna-1)
\end{equation*}
\end{sremark}
Below we describe log canonical singularities with 
integral $\K^2$. Note however, that in general, the condition $\K^2\in\ZZ$ is necessary but not sufficient 
for the existence of $\QQ$-Gorenstein smoothing (cf. Theorem~\ref{Theorem-Q-smoothings} and Proposition
\ref{Proposition-computation-K2} (DV)).

\begin{proposition}
\label{Proposition-computation-K2}
Let $(X\ni P)$ be a rational strictly log canonical surface singularity. Then 
in the notation of Theorem \xref{theorem-classification-lc-singularities}
the invariant $\K^2$ is integral if and only if
$X$ is either of type $[n_1,\dots, n_s; [2]^4]$ or 
of type $\langle n; r_1,r_2,r_3; \varepsilon,\varepsilon,\varepsilon\rangle$,
where $(r_1,r_2,r_3)=(3,3,3)$, $(2,4,4)$ or $(2,3,6)$ and $\varepsilon=1$ or $-1$.
Moreover, we have:
\begin{itemize}
\item[(DV)] 
if $X$ is of type $[n_1,\dots, n_s; [2]^4]$ or $\langle n; r_1,r_2,r_3; -1,-1,-1\rangle$,
then 
\begin{equation*}
-\K^2=n-2, 
\end{equation*}
where in the case 
$[n_1,\dots, n_s; [2]^4]$, we put $n:=\sum (n_i-2)+2$;
\item[(nDV)] 
if $X$ is of type $\langle n; r_1,r_2,r_3; 1,1,1\rangle$, then 
\begin{equation*}
-\K^2=n-9+\sum r_i.
\end{equation*}
\end{itemize}
\end{proposition}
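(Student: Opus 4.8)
The plan is to compute $\K^2 = \Delta^2$ directly from the classification of strictly log canonical surface singularities in Theorem~\ref{theorem-classification-lc-singularities}, exploiting the explicit structure of the dual graphs. First I would recall that the codiscrepancy divisor $\Delta$ in \eqref{equation-codiscrepancy} is characterized by the adjunction conditions $(\Delta - E)\cdot E_i = -K_Y\cdot E_i = 2 + E_i^2$ for every exceptional component $E_i$, together with $\Supp(\Delta)=\Supp(E)$; since the intersection form on the exceptional set is negative definite, $\Delta$ is uniquely determined as a $\QQ$-divisor. For each of the five types in the classification I would write $\Delta = \sum a_i E_i$ and read off the linear system for the coefficients $a_i$ from the shape of $\Gamma(X,o)$.

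For the integrality part, the key observation is that the coefficient $a_i$ on a component $E_i$ equals $1$ plus the discrepancy's absolute value, and for a tail of the graph lying in a ``leg'' $\langle r_j, q_j\rangle$ the coefficients are governed by the cyclic quotient singularity $\frac{1}{r_j}(1,q_j)$; the coefficient of the component of the leg adjacent to the central vertex $E_0$ is, up to the standard continued-fraction computation, related to $q_j/r_j$ or to $q_j'/r_j$ where $q_j q_j' \equiv 1$. One then finds that $\Delta^2$ is an explicit rational number whose denominator is controlled by the $r_j$, and imposing $\Delta^2\in\ZZ$ forces either the $[n_1,\dots,n_s;[2]^4]$ graph (where all legs are single $(-2)$-curves, $r_j=2$, and the $\tfrac14$-contributions from the four legs combine to an integer) or one of the platonic triples $(3,3,3),(2,4,4),(2,3,6)$ with the specific residues $q_j \equiv \varepsilon \pmod{r_j}$, $\varepsilon=\pm1$. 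Here Corollary~\ref{proposition-classification-lc-singularities} supplies the constraint $\sum 1/r_i = 1$, which cuts the list of triples down to exactly those three, and the sign $\varepsilon$ enters because the different on the central curve $\tilde C$ must be $\sum(1-1/r_i)\,\mathrm{pt}$ with $\sum(1-1/r_i)=2$, i.e.\ the log canonical (boundary) case, leaving only the two symmetric choices.

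For the explicit value of $-\K^2$ I would split into the two cases in the statement. In case (DV), the claim is that $(\tilde X,\tilde C)$ is in fact crepant over $(X\ni o)$ (the dlt modification contracts no curves, or contracts only curves with discrepancy $-1$), so $\K^2 = \Delta^2$ reduces to computing the self-intersection of $\tilde C + (\text{correction from Du Val points})$; using Corollary~\ref{proposition-classification-lc-singularities}(ii)--(iii) and the adjunction formula \eqref{equation-adjunction} on each $\tilde C_i$, the contributions telescope to give $-\K^2 = n-2$ after setting $n := \sum(n_i-2)+2$ in the $[2]^4$ case. For case (nDV) one does the same bookkeeping but now the three legs contribute $-(1-1/r_i)$ corrections at the attaching points rather than $-\tfrac12$, and chasing $(K_{\tilde X}+\tilde C)^2 = (\sigma^*K_X)^2 = K_X^2$ through the resolution, combined with $\Delta = \sigma^*(\cdots) + \tilde C - \cdots$, yields $-\K^2 = n - 9 + \sum r_i$.

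The main obstacle I anticipate is the continued-fraction / linear-algebra bookkeeping needed to pin down the coefficients $a_i$ of $\Delta$ on the legs and hence the exact denominator of $\Delta^2$: one must be careful that the relevant residue appearing in $\Delta^2$ is $q_i$ itself (not $q_i^{-1} \bmod r_i$), and that the four $\tfrac14$'s in the $[2]^4$ case genuinely sum to an integer while a ``wrong'' leg structure (e.g.\ a longer chain, or residue $\not\equiv\pm1$) produces a non-integral $\Delta^2$. Once the coefficient of $\Delta$ at each attaching point is identified as $1-1/r_i$ (in the $\varepsilon=-1$, DV, case) or $1-q_i/r_i$-type expressions (in the $\varepsilon=1$, nDV, case), the final formulas follow from a direct but routine intersection computation on the minimal resolution, which I would present compactly rather than component-by-component.
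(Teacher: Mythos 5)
Your overall strategy is the same as the paper's: determine the codiscrepancy divisor $\Delta$ from the adjunction conditions leg by leg on the minimal resolution and compute $\Delta^2$ explicitly. The formulas in (DV) and (nDV) would indeed come out of that bookkeeping (the paper packages it as: for a leg $\Delta_i$ attached to $C=\lfloor\Delta\rfloor$ one has $(C+\Delta_i)^2-C^2=1-\tfrac1{r_i}$ for a Du Val chain and $4-r_i-\tfrac3{r_i}$ for a single $(-r_i)$-curve, then sums over legs using $\sum 1/r_i=1$).

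There is, however, one genuine gap in your ``only if'' direction. First, a non-issue you worry about: for $r_i\in\{2,3,4,6\}$ every residue coprime to $r_i$ is $\equiv\pm1$, so by Corollary~\ref{proposition-classification-lc-singularities} each leg is automatically either a single $(-r_i)$-curve or a chain of $r_i-1$ $(-2)$-curves; there are no ``longer chains or residues $\not\equiv\pm1$'' to exclude. The real content of the integrality statement is that the three legs must all be of the \emph{same} type, i.e.\ that mixed configurations such as $[n;[3],[3],[2,2]]$ are excluded. Your proposed mechanism for this --- that the different on the central curve must be $\sum(1-\tfrac1{r_i})P_i$ with $\sum(1-\tfrac1{r_i})=2$ --- does not do the job: the different coefficient at a cyclic quotient point of order $r$ is $1-\tfrac1{r}$ regardless of the residue $q$, so this condition is satisfied by the mixed configurations as well. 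The correct argument is the integrality of $\Delta^2$ itself: for each $r_i$ the leg contribution $\delta_i=(C+\Delta_i)^2-C^2$ takes exactly one of the two values above (e.g.\ $\delta_i\in\{0,2/3\}$ when $r_i=3$), and since $C^2\in\ZZ$, the sum $\sum\delta_i$ is an integer only when all three legs carry the same type. You should make this divisibility check explicit for each of the triples $(3,3,3)$, $(2,4,4)$, $(2,3,6)$; otherwise the proof of the ``only if'' half is incomplete.
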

For the proof we need the following lemma.

\begin{slemma}
\label{Lemma-computation-K2}
Let $V$ be a smooth surface and let $C, E_1,\dots,E_m\subset V$ be 
proper smooth rational curves on $V$ whose configuration is a chain:
\begin{equation*}
\xy
\xymatrix@R=3pt{
\underset{C}\circ&\underset{E_m}\circ\ar@{-}[r]\ar@{-}[l]&\underset{}
\cdots\ar@{-}[r]&\underset{E_1}\circ
}
\endxy
\end{equation*}
Let $D=C+\sum\alpha_i E_i$ be a $\QQ$-divisor such that $(K_V+D)\cdot E_j=0$ for all $j$.
\begin{enumerate}
\item 
If all the $E_i$'s are $(-2)$-curves, then $D^2-C^2=m/(m+1)$. 
\item 
If $m=1$ and $E_1^2=-r$, then $D^2-C^2=(r-1)(3-r)/r$.
\end{enumerate}
\end{slemma}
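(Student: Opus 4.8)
The plan is to reduce everything to a linear-algebra computation on the chain $C, E_m, \dots, E_1$. First I would set up the system of equations coming from the hypothesis $(K_V+D)\cdot E_j = 0$ for $j=1,\dots,m$. By the adjunction formula on each smooth rational curve, $K_V\cdot E_j = -2 - E_j^2$, so the condition $(K_V+D)\cdot E_j = 0$ becomes a tridiagonal linear system in the coefficients $\alpha_1,\dots,\alpha_m$ of $D = C + \sum \alpha_i E_i$, with the term $C\cdot E_m = 1$ (and $C\cdot E_j = 0$ for $j < m$) entering as an inhomogeneity at one end. Solving this system gives explicit rational values for the $\alpha_i$.

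For part (i), all $E_i$ are $(-2)$-curves, so $K_V\cdot E_j = 0$ and the system reads $\alpha_{j-1} - 2\alpha_j + \alpha_{j+1} = 0$ in the interior, with boundary relations $-2\alpha_1 + \alpha_2 = 0$ at the free end and $\alpha_{m-1} - 2\alpha_m + 1 = 0$ at the end meeting $C$. The interior equations force the sequence $\alpha_i$ to be linear in $i$; combined with the free-end relation one gets $\alpha_i = i/(m+1)$ after also using the $C$-end relation. Then I would compute
\[
D^2 - C^2 = 2\,C\cdot\Bigl(\sum_i \alpha_i E_i\Bigr) + \Bigl(\sum_i \alpha_i E_i\Bigr)^2
= 2\alpha_m (C\cdot E_m) + \sum_i \alpha_i\,(K_V+D)\cdot E_i - \sum_i \alpha_i\, K_V\cdot E_i,
\]
and since $(K_V+D)\cdot E_i = 0$ and $K_V\cdot E_i = 0$ for all $i$, this collapses to $2\alpha_m = 2m/(m+1)$; but one must be slightly more careful, because $\bigl(\sum\alpha_i E_i\bigr)^2 = \sum\alpha_i\,(D-C)\cdot E_i$, so the clean way is to write $D^2 - C^2 = D\cdot(D-C) + C\cdot(D-C) = \sum_i\alpha_i\, D\cdot E_i + \alpha_m$, and then use $D\cdot E_i = -K_V\cdot E_i = 0$, leaving $\alpha_m \cdot 1 + \alpha_m = \cdots$; I'd track the bookkeeping so that the final answer comes out to $m/(m+1)$ as claimed.

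For part (ii) there is a single curve $E_1$ with $E_1^2 = -r$, so $K_V\cdot E_1 = r - 2$, the defining equation is $(r-2) + (C\cdot E_1) + \alpha_1 E_1^2 = (r-2) + 1 - r\alpha_1 = 0$, giving $\alpha_1 = (r-1)/r$, and then $D^2 - C^2 = 2\alpha_1(C\cdot E_1) + \alpha_1^2 E_1^2 = 2\alpha_1 - r\alpha_1^2 = \alpha_1(2 - r\alpha_1) = \frac{r-1}{r}\bigl(2 - (r-1)\bigr) = (r-1)(3-r)/r$. I expect the only real subtlety to be the sign conventions and the direction of the chain (which end touches $C$), together with getting the telescoping identity for $\bigl(\sum\alpha_i E_i\bigr)^2$ exactly right in part (i); the computation itself is routine once the tridiagonal system is written down.
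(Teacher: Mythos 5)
Your proposal is correct and follows essentially the same route as the paper: solve the tridiagonal system coming from $(K_V+D)\cdot E_j=0$ to get $\alpha_i=i/(m+1)$ (resp.\ $\alpha_1=(r-1)/r$), and then evaluate the intersection numbers. The only slip is in the final bookkeeping of part (i): your identity $D^2-C^2=D\cdot(D-C)+C\cdot(D-C)=\sum_i\alpha_i\,D\cdot E_i+\alpha_m$ is right, and with $D\cdot E_i=-K_V\cdot E_i=0$ it yields exactly $\alpha_m=m/(m+1)$ rather than the $2\alpha_m$ you land on twice; the paper instead expands $\bigl(\sum_i\tfrac{i}{m+1}E_i\bigr)^2$ directly and reaches the same value.
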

\begin{proof}
Assume that $E_i^2=-2$ for all $i$.
It is easy to check that 
$D=C+\sum_{i=1}^m\frac i{m+1} E_i$.
Then
\begin{multline*}
D^2-C^2=\frac {2m}{m+1}+\left(\sum_{i=1}^m\frac i{m+1} E_i\right)^2=
\\
=\frac {2m}{m+1}+
\frac2{(m+1)^2}\left(-\sum_{i=1}^m i^2+\sum_{i=1}^{m-1} i(i+1)\right)
=\frac {m}{m+1}.
\end{multline*}

Now let $m=1$ and $E_1^2=-r$. Then $D=C+\frac {r-1}r E_1$.
Hence
\begin{equation*}
D^2-C^2=\frac{2(r-1)}r-\frac{(r-1)^2}{r}=\frac{(r-1)(3-r)}r.\qedhere
\end{equation*}
\end{proof}

\begin{proof}[Proof of Proposition \xref{Proposition-computation-K2}]
Let $\Delta$ be as in \eqref{equation-codiscrepancy}
and let $C:=\lfloor\Delta\rfloor$. 
Write $\Delta=C+\sum\Delta_i$, where $\Delta_i$ are effective connected
$\QQ$-divisors. By Lemma~\ref{Lemma-computation-K2} we have 
\begin{equation*}
\delta_i:=\Bigl(\left(C+\Delta_i\right) ^2-C^2\Bigr)=
\begin{cases}
1-\frac1{r_i}&\text{if $\Delta_i$ is of type $\frac 1{r_i}(1,-1)$,}
\\
4-r_i-\frac3{r_i}&\text{if $\Delta_i$ is of type $\frac 1{r_i}(1,1)$.}
\end{cases}
\end{equation*}
Then
\begin{equation*}
\K^2=\left(C+\sum\Delta_i\right) ^2=C^2+\sum\delta_i.
\end{equation*}
If $(X\ni P)$ is of type $[n_1,\dots,n_s,[2],[2],[2],[2]]$,
then 
\begin{equation*}
\K^2=C^2+2=-\sum (n_i-2). 
\end{equation*}
Assume that $C$ is irreducible and $(X\ni P)$ is of type $\langle n; r_1,r_2,r_3\rangle$,
where $\sum 1/r_i=1$. 

If all the $\Supp(\Delta_i)$'s are Du Val chains, then 
\begin{equation*}
\K^2=C^2+\sum\left(1-\textstyle{\frac{1}{r_i}}\right)=-n+2. 
\end{equation*}
If $(X\ni P)$ is of type 
$\langle n; r_1,r_2,r_3;1,1,1\rangle$, then 
\begin{equation*}
\K^2=C^2+\sum\left(4-r_i-\textstyle{\frac3{r_i}}\right)=-n+9-\sum r_i.
\end{equation*}
It remains to consider the ``mixed'' case. Assume for example that $(X\ni P)$ is of type 
$\langle n; 3,3,3\rangle$.
Then $\delta_i\in\{0,\, 2/3\}$.
Since $\sum\delta_i$ is an integer, the only possibility is
$\delta_1=\delta_2=\delta_3$, i.e. all the chains
$\Delta_i$ are of the same type. The cases $\langle n; 2,4,4\rangle$
and $\langle n; 2,3,6\rangle$ are considered similarly.
\end{proof}

\begin{scorollary}
\label{canonical-cover-Z2}
Let $(X\ni P)$ be a strictly log canonical surface singularity of index $I\ge 2$
admitting a $\QQ$-Gorenstein smoothing. Let $(X^{\sharp}\ni P^{\sharp})\to (X\ni P)$ 
be the index one cover. Then 
\begin{equation*}
-\K^2_{(X^{\sharp}\ni P^{\sharp})}=
\begin{cases}
I(n-2)&\text{in the case \type{(DV)},} 
\\
I(n-1)&\text{in the case \type{(nDV)}.}
\end{cases}
\end{equation*}
\end{scorollary}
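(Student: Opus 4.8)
The plan is to express $\K^{2}_{(X^{\sharp}\ni P^{\sharp})}$ as $I$ times a self-intersection number on the dlt modification of $(X\ni P)$, and then to read that number off from Proposition~\ref{Proposition-computation-K2}. Since $I\ge 2$ the singularity $(X\ni P)$ is rational, so by Lemma~\ref{lemma-K2-integer} the smoothing hypothesis forces $\K^{2}_{(X\ni P)}\in\ZZ$ and we are in one of the cases \type{(DV)}, \type{(nDV)} of Proposition~\ref{Proposition-computation-K2}. Recall from the corollary preceding Construction~\ref{construction-index-one-cove-log-canonical} that $(X^{\sharp}\ni P^{\sharp})$ is simple elliptic or a cusp, so its minimal resolution $\tilde\sigma\colon(\tilde X^{\sharp}\supset\tilde C^{\sharp})\to(X^{\sharp}\ni P^{\sharp})$ satisfies $K_{\tilde X^{\sharp}}=\tilde\sigma^{*}K_{X^{\sharp}}-\tilde C^{\sharp}$ with $\tilde C^{\sharp}$ reduced (every exceptional component has discrepancy $-1$); hence $\K^{2}_{(X^{\sharp}\ni P^{\sharp})}=(\tilde C^{\sharp})^{2}$. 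By Construction~\ref{construction-index-one-cove-log-canonical} and diagram~\eqref{equation-diagram-resolution}, $\tilde\pi\colon\tilde X^{\sharp}\to\tilde X=\tilde X^{\sharp}/\mumu_{I}$ is the quotient onto the dlt modification $\sigma\colon(\tilde X\supset\tilde C)\to(X\ni P)$, and $\tilde C^{\sharp}=\tilde\pi^{-1}(\tilde C)$. The map $\tilde\pi$ is \'etale in codimension one: it is \'etale on $\tilde X^{\sharp}\setminus\tilde C^{\sharp}\cong X^{\sharp}\setminus\{P^{\sharp}\}$ because $\mumu_{I}$ acts there freely, and it has no pseudo-reflection along a component $D^{\sharp}$ of $\tilde C^{\sharp}$ because, by the faithfulness of the $\mumu_{I}$-action on the generating section of $\OOO_{\tilde X^{\sharp}}(K_{\tilde X^{\sharp}}+\tilde C^{\sharp})$ recorded in Construction~\ref{construction-index-one-cove-log-canonical}, the stabiliser of a general point of $D^{\sharp}$ must rotate the tangent direction along $D^{\sharp}$ non-trivially. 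Therefore $\tilde C^{\sharp}=\tilde\pi^{*}\tilde C$, and the projection formula for the degree-$I$ finite morphism $\tilde\pi$ gives
\[
\K^{2}_{(X^{\sharp}\ni P^{\sharp})}=(\tilde C^{\sharp})^{2}=I\cdot\tilde C^{2}.
\]

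Next I would compute $\tilde C^{2}$ on $\tilde X$. Write the minimal resolution of $X$ as $\eta=\sigma\circ\tilde\eta\colon Y\xrightarrow{\tilde\eta}\tilde X\xrightarrow{\sigma}X$ as in~\eqref{equation-min-resolution}; then $\tilde\eta$ is the minimal resolution of $\tilde X$, and the singular points $\tilde x_{1},\dots,\tilde x_{m}$ of $\tilde X$ are exactly the cyclic quotient singularities of Corollary~\ref{proposition-classification-lc-singularities}. Comparing $K_{Y}=\eta^{*}K_{X}-\Delta$, $K_{\tilde X}=\sigma^{*}K_{X}-\tilde C$ and $K_{Y}=\tilde\eta^{*}K_{\tilde X}-\Delta^{\tilde X}$, where $\Delta^{\tilde X}\ge 0$ is the codiscrepancy of $\tilde\eta$, yields $\Delta=\tilde\eta^{*}\tilde C+\Delta^{\tilde X}$. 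Since $\tilde C$ is $\QQ$-Cartier on $\tilde X$ and $\Delta^{\tilde X}$ is $\tilde\eta$-exceptional, the cross term $\tilde\eta^{*}\tilde C\cdot\Delta^{\tilde X}$ vanishes; moreover $(\Delta^{\tilde X})^{2}=\sum_{k}\K^{2}_{(\tilde X,\tilde x_{k})}$ because the exceptional divisors over distinct $\tilde x_{k}$ are disjoint and $\tilde\eta$ restricts to the minimal resolution of each germ $(\tilde X\ni\tilde x_{k})$. Hence
\[
\K^{2}_{(X\ni P)}=\Delta^{2}=\tilde C^{2}+\sum_{k}\K^{2}_{(\tilde X,\tilde x_{k})}.
\]

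Finally I would substitute the two known evaluations. In case \type{(DV)} the points $\tilde x_{k}$ are Du Val (four of type \type{A_{1}}, or the three points of type \type{A_{r_{i}-1}}), so $\K^{2}_{(\tilde X,\tilde x_{k})}=0$, and Proposition~\ref{Proposition-computation-K2} gives $\tilde C^{2}=\K^{2}_{(X\ni P)}=-(n-2)$, whence $-\K^{2}_{(X^{\sharp}\ni P^{\sharp})}=-I\tilde C^{2}=I(n-2)$. In case \type{(nDV)} each $\tilde x_{k}$ is of type $\tfrac{1}{r_{i}}(1,1)$, whose minimal resolution is a single $(-r_{i})$-curve; a computation as in Lemma~\ref{Lemma-computation-K2} gives $\K^{2}_{(\tilde X,\tilde x_{k})}=-(r_{i}-2)^{2}/r_{i}=-r_{i}+4-4/r_{i}$. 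Combining this with $\K^{2}_{(X\ni P)}=-(n-9+\sum r_{i})$ from Proposition~\ref{Proposition-computation-K2} and the relation $\sum 1/r_{i}=1$, one gets $\tilde C^{2}=-(n-1)$, so $-\K^{2}_{(X^{\sharp}\ni P^{\sharp})}=I(n-1)$.

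The only step that needs genuine care is the claim that $\tilde\pi$ is \'etale in codimension one (equivalently, that the lifted $\mumu_{I}$-action on $\tilde X^{\sharp}$ has no pseudo-reflections): this is what produces the clean factor $I$ in the projection formula, and it is the point where the geometry of the cover genuinely enters. Everything else is elementary bookkeeping, provided one is careful to compute every $\K^{2}$ on a minimal resolution and to use that $\tilde C$, $\tilde\eta^{*}\tilde C$ and $\sigma^{*}K_{X}$ are $\QQ$-Cartier so that the relevant intersection products with exceptional curves vanish.
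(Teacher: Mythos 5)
Your proposal is correct and follows essentially the same route as the paper: compare the codiscrepancies of $\eta$, $\sigma$ and $\tilde\eta$ to get $\Delta=\tilde\eta^*\tilde C+\Delta^{\tilde X}$, evaluate $\tilde C^2$ from Proposition~\ref{Proposition-computation-K2}, and conclude via $\K^2_{(X^{\sharp}\ni P^{\sharp})}=(\tilde C^{\sharp})^2=I\tilde C^2$. The only difference is that you spell out both cases and justify the factor $I$ (no pseudo-reflections, so $\tilde\pi^*\tilde C=\tilde C^{\sharp}$), a point the paper's proof asserts without comment; your verification of it via the faithfulness of the action on $\tilde\sigma^*\OOO_{X^{\sharp}}(K_{X^{\sharp}})$ is sound.
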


\begin{proof}
Let us consider the   \type{(nDV)} case. 
We use the notation of \eqref{equation-min-resolution}
and \eqref{equation-diagram-resolution}. Let $E_1$, $E_2$, $E_3$ be the 
$\tilde\eta$-exceptional divisors. Then
\[
K_{\tilde X}=\sigma^*K_X-\tilde C,\quad
K_Y=\eta^*K_X-\Delta=\tilde \eta^*K_{\tilde X}-\textstyle{\sum \frac {r_i-2}{r_i}E_i}. 
\]
Therefore, 
\[
\Delta=\tilde \eta^* \tilde C+ \textstyle{\sum \frac {r_i-2}{r_i}E_i},
\quad
\Delta^2=\tilde C^2- \textstyle{\sum \left(r_i-4+\frac{4}{r_i}\right)},
\]
\[
 -\tilde C^2= n+3\textstyle{-\sum \frac{4}{r_i}}=n-1,
\quad -\K^2_{(X^{\sharp}\ni P^{\sharp})}=-I\tilde C^2=I(n-1).
\qedhere
\]
\end{proof}

\begin{sremark}
\label{canonical-cover-mult}
In the above notation 
we have (see e.g. \cite[Theorem 4.57]{Kollar-Mori-1988})
\begin{eqnarray*}
\mult (X^{\sharp}\ni P^{\sharp})&=&\max \left(2,-\K^2_{(X^{\sharp}\ni P^{\sharp})}\right),
\\
\operatorname{emb}\dim (X^{\sharp}\ni P^{\sharp})&=&\max \left(3,-\K^2_{(X^{\sharp}\ni P^{\sharp})}\right). 
\end{eqnarray*}
\end{sremark}
The following proposition is the key point in the proof of 
of Theorem~\ref{Theorem-Q-smoothings}.
\begin{proposition}
\label{proposition-nDV}
Let $(X\ni P)$ be a strictly log canonical rational surface singularity of index $I\ge 3$ admitting 
a $\QQ$-Gorenstein smoothing.
Then $(X\ni P)$ is of type $[n; [r_1],[r_2],[r_3]]$. 
\end{proposition}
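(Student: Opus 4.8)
The statement to prove is that a strictly log canonical rational surface singularity $(X\ni P)$ of index $I\ge 3$ admitting a $\QQ$-Gorenstein smoothing must be of type $[n;[r_1],[r_2],[r_3]]$ — that is, in the notation of Corollary \xref{proposition-classification-lc-singularities}(iii), the central curve $\tilde C$ carries exactly one node $\frac1{n}(1,\ast)$ and the three ``arms'' $\langle r_i, q_i\rangle$ are each a single vertex, so that $q_i=1$ and $r_i\in\{3,4,6\}$ with $\sum 1/r_i=1$ and $\lcm(r_i)=I$. First I would recall from Corollary \xref{proposition-classification-lc-singularities} that, since $I\ge 3$, the dlt modification $\sigma:(\tilde X\supset\tilde C)\to(X\ni P)$ has $\tilde C\cong\PP^1$ with three cyclic quotient singularities $\frac1{r_i}(1,q_i)$ of $\tilde X$ lying on it, and that the pair $(\tilde X,\tilde C)$ is plt. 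Applying the Proposition following Inversion of Adjunction (the local description of plt pairs) at each of these three points, $\Diff_{\tilde C}(0)=\sum (1-1/r_i)P_i$, and by the adjunction formula \eqref{equation-adjunction}, $2=\deg(-K_{\tilde C})=\deg\Diff_{\tilde C}(0)+(-\tilde C^2\text{ correction})$; more precisely $-\tilde C\cdot\tilde C|_{\tilde C}$ is computed from $(K_{\tilde X}+\tilde C)\cdot\tilde C=\tilde C^2+\deg\Diff$, forcing $\sum 1/r_i=1$ with $(r_1,r_2,r_3)$ one of $(3,3,3)$, $(2,4,4)$, $(2,3,6)$ — exactly the three cases of Theorem \xref{theorem-classification-lc-singularities}.

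**The core argument.** The real content is ruling out longer arms, i.e. showing each $\langle r_i,q_i\rangle$ is a single vertex. I would run this through the index one cover $\pi:(X^\sharp\ni P^\sharp)\to(X\ni P)$, which by \ref{index-one-cover} is a simple elliptic or cusp singularity of index $1$, and through Corollary \xref{canonical-cover-Z2} and Remark \xref{canonical-cover-mult}. By Lemma~\ref{lemma-canonical} the total space $(\XX\ni P)$ of the $\QQ$-Gorenstein smoothing is an isolated canonical threefold singularity, and $(\XX^\sharp\ni P^\sharp)$ — the total space of the induced smoothing of $X^\sharp$ — is also isolated canonical of index $1$, hence (by the classification of canonical, equivalently rational Gorenstein, threefold points, or by Reid's general-elephant-type results) its general hyperplane section, which is $X^\sharp$, must be a Du Val or simple elliptic or... — in fact the key is that a Gorenstein canonical threefold singularity has a Du Val generic hyperplane section; but $X^\sharp$ is the special fiber, a \emph{simple elliptic or cusp} singularity, and Lemma~\ref{lemma-canonical} only gives that $(\XX^\sharp\ni P^\sharp)$ is canonical, not that $X^\sharp$ is its generic section. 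So instead I would use the numerical constraint: by Lemma~\ref{lemma-K2-integer}, $\K^2_{(X\ni P)}\in\ZZ$, and by Proposition~\ref{Proposition-computation-K2} this already forces $X$ to be of type $[n_1,\dots,n_s;[2]^4]$ (excluded since $I\ge 3$) or $\langle n;r_1,r_2,r_3;\varepsilon,\varepsilon,\varepsilon\rangle$ with all $\varepsilon$ equal and $(r_1,r_2,r_3)\in\{(3,3,3),(2,4,4),(2,3,6)\}$. This is \emph{almost} the conclusion; what remains is to see that $\varepsilon=1$ is impossible when any arm is longer than one vertex, and that $\varepsilon=-1$ with a longer arm is also impossible.

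**Closing the gap.** For the $\varepsilon=-1$ (type \type{(DV)}) sub-case the arms $\langle r_i, 1\rangle=[r_i]$ are forced to be single vertices by the very shape of $\langle n;r_1,r_2,r_3;-1,-1,-1\rangle$ — here $\langle r_i,r_i-1\rangle=[2,\dots,2]$ ($r_i-1$ twos) would be a \emph{Du Val} $\rA_{r_i-1}$ chain, giving the singularity type $[n_1,\dots,n_s;[2]^4]$-flavoured data which has index $2$, contradicting $I\ge 3$; so $\varepsilon=-1$ actually cannot happen at all when $I\ge3$ (this should be cross-checked against the index formula $I=\lcm(r_i)$ in Corollary \xref{proposition-classification-lc-singularities}(iii), which presupposes the arms are single $\frac1{r_i}(1,1)$ points). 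Hence $\varepsilon=1$, $q_i=1$, each arm $[r_i]$ is one vertex, and $(X\ni P)=\langle n;r_1,r_2,r_3;1,1,1\rangle=[n;[r_1],[r_2],[r_3]]$, as claimed. I would double-check the boundary case where $r_i=2$ (so $\frac12(1,1)$, an $\rA_1$ point on $\tilde C$) to confirm it still reads as a single-vertex arm $[2]$ consistent with the notation $[n;[2],[4],[4]]$ and $[n;[2],[3],[6]]$. The main obstacle I anticipate is precisely this bookkeeping: disentangling what ``mixed'' arm configurations (some $\frac1{r_i}(1,1)$, some $\frac1{r_i}(1,-1)$) are permitted by the plt condition on $(\tilde X,\tilde C)$, and confirming that the integrality argument of Proposition~\ref{Proposition-computation-K2} plus the index constraint $I\ge 3$ genuinely eliminates every configuration with an arm of length $\ge 2$ — the $(3,3,3)$ case with one long arm being the sharpest test, which is why Proposition~\ref{Proposition-computation-K2}'s ``mixed case'' analysis (all $\delta_i$ equal) is the crucial input to cite.
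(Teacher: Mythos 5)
Your reduction via Lemma~\ref{lemma-K2-integer} and Proposition~\ref{Proposition-computation-K2} to the dichotomy ``type $(\mathrm{DV})$ or type $(\mathrm{nDV})$'' matches the opening of the paper's proof. But your ``Closing the gap'' step is wrong, and it is precisely there that all the work lies. You claim that the $\varepsilon=-1$ case $\langle n;r_1,r_2,r_3;-1,-1,-1\rangle$ cannot occur for $I\ge 3$ because its Du Val arms would force index~$2$. That is false: the index is governed by Corollary~\ref{proposition-classification-lc-singularities}(iii) as $I=\lcm(r_1,r_2,r_3)$, where the $\frac1{r_i}(1,q_i)$ are the quotient points on the dlt model, and this holds for \emph{any} $q_i$ coprime to $r_i$. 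For example $[n;[2,2],[2,2],[2,2]]=\langle n;3,3,3;-1,-1,-1\rangle$ has three $\rA_2$ arms, hence three $\frac13(1,2)$ points on $\tilde C$, and index $3$, not $2$; its $\K^2=-(n-2)$ is integral, so nothing numerical excludes it. (Only the four-armed graphs $[n_1,\dots,n_s;[2]^4]$ have index $2$; a three-armed graph with Du Val arms is a different animal.) Your hedge about cross-checking against the index formula was the right instinct, but the check fails.

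Ruling out the $(\mathrm{DV})$ case is the actual content of the proposition, and it cannot be done by two-dimensional bookkeeping on the resolution graph: the paper passes to the total space $(\XX\ni P)$ of the smoothing, which by Lemma~\ref{lemma-canonical} is isolated canonical, takes its index one cover, and then splits into two cases. If $(\XX\ni P)$ is terminal, it compares $\operatorname{emb}\dim(X^\sharp\ni P^\sharp)=I(n-2)$ (from Corollary~\ref{canonical-cover-Z2} and Remark~\ref{canonical-cover-mult}, valid only in the $(\mathrm{DV})$ case) with the bound $\le 4$ coming from the classification of terminal threefold points, and then eliminates the surviving possibilities $I=3,4$ by explicit equation analysis of $\phi$ and $\psi$. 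If $(\XX\ni P)$ is strictly canonical, it takes a crepant blowup and invokes Kawakita's theorem on indices of essentially canonical threefold singularities to pin down the singularities on the dlt model of $X$ as $\frac1{r_i}(1,1)$, which is the $(\mathrm{nDV})$ shape. None of this machinery appears in your proposal, so the argument as written does not close.
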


\begin{proof}
By Lemma~\ref{lemma-K2-integer} the number
$\K^2$ is integral and by Proposition \xref{Proposition-computation-K2}
$(X\ni P)$ is either of type \type{nDV} or of type \type{DV}. 
Assume that $(X\ni P)$ is of type \type{DV}.

\begin{case}
\label{new-label}
Let $\ff: \XX\to\XD$ be a $\QQ$-Gorenstein smoothing.
By Lemma~\ref{lemma-canonical} the pair $(\XX,X)$ is log canonical 
and $(P\in \XX)$ is an isolated canonical singularity. 
Let $\pi: (\XX^\sharp\ni P^\sharp) \to (\XX\ni P)$ be the index one cover (see~\ref{index-one-cover})
and let $X^\sharp:=\pi^*X$. Then $X^\sharp$ is a Cartier divisor on $\XX^\sharp$,
the singularity $(\XX^\sharp\ni P^\sharp)$ is canonical (of index $1$), and the pair 
$(\XX^\sharp, X^\sharp)$ is lc. Moreover, $\XX^\sharp$ is CM, $X^\sharp$ hence normal, and
the canonical divisor $K_{X^\sharp}$ 
is Cartier. Therefore, $\pi$ induces the index one cover $\pi_X: (X^\sharp\ni P^\sharp) \to (X\ni P)$.
In particular, the index of $(P\in \XX)$ equals $I$.
Since $I\ge 3$, the singularity $(X^\sharp\ni P^\sharp)$ is simple elliptic and
the dlt modification coincides with the minimal resolution.
\end{case}

\begin{case}
First we consider the case where $(P\in \XX)$ is \textit{terminal}.
Below we essentially use the classification of terminal singularities (see e.g. \cite{Reid-YPG1987}).
In our case, $(\XX^\sharp\ni P^\sharp)$ is either smooth or an isolated cDV singularity.
In particular,
\begin{equation*}
\operatorname{emb}\dim (X^{\sharp}\ni P^{\sharp})\le \operatorname{emb}\dim (\XX^{\sharp}\ni P^{\sharp})\le 4.
\end{equation*}
By our assumption $(X\ni P)$ is of type \type{DV}. So, 
by Corollary~\ref{canonical-cover-Z2} and Remark~\ref{canonical-cover-mult}
\begin{equation}
\label{equation-emb-dim-I}
\operatorname{emb}\dim (X^{\sharp}\ni P^{\sharp})= I(n-2).
\end{equation}

If $\operatorname{emb}\dim (\XX^{\sharp}\ni P^{\sharp})=3$, i.e. $(\XX^{\sharp}\ni P^{\sharp})$
is smooth, then $\operatorname{emb}\dim (X^{\sharp}\ni P^{\sharp})=3$,
$\mult{(X^{\sharp}\ni P^{\sharp})}=3$, and $I=n=3$.
In this case $(\XX\ni P)$ is a cyclic quotient 
singularity of type $\frac 13(1,1,-1)$ \cite{Reid-YPG1987}.
We may assume that $(\XX^{\sharp},P^\sharp)=(\CC^3,0)$ and $X^{\sharp}$
is given by an invariant equation $\psi(x_1,x_2,x_3)=0$
with $\mult_{0}\psi=3$. Since $(X^{\sharp}\ni P^{\sharp})$ is a simple 
elliptic singularity, the cubic part $\psi_3$ of $\psi$
defines a smooth elliptic curve on $\PP^2$. Hence we can write 
$\psi_3= x_3^3 + \tau(x_1,x_2)$, where $\tau(x_1,x_2)$
is a cubic homogeneous polynomial without multiple factors.
The minimal resolution $\tilde X^\sharp \to X^\sharp$ is the blowup of the origin.
In the affine chart $\{x_2\neq 0\}$ the surface $\tilde X^\sharp$ is given by the 
equation $\tau(x_1',1)+x_3^{\prime 3}+x_2'(\cdots)=0$ and the action 
of $\mumu_3$ is given by the weights $(0,1,1)$. Then it is easy to see that
$\tilde X$ has three singular points of type $\frac13(1,1)$. This contradicts our assumption.

Thus we may assume that $\operatorname{emb}\dim (\XX^{\sharp}\ni P^{\sharp})=4$, i.e. 
$(\XX^{\sharp}\ni P^{\sharp})$ is a hypersurface singularity. Then $I= 4$ by \eqref{equation-emb-dim-I}.
We may assume that $(\XX^{\sharp}\ni P^{\sharp})\subset (\CC^4\ni 0)$ is a hypersurface 
given by an equation $\phi(x_1,\dots,x_4)=0$ with $\mult_0\phi=2$ 
and $X^{\sharp}$ is cut out by an invariant equation $\psi(x_1,\dots,x_4)=0$.
Furthermore, we may assume that $x_1,\dots,x_4$ are semi-invariants with $\mumu_4$-weights $(1,1,-1,b)$,
where $b=0$ or $2$ (see \cite{Reid-YPG1987}).

Consider the case $\mult_0\psi=1$. Since $\psi$ is invariant, we have 
$\psi= x_4+(\text{higher degree terms})$ and $b=0$.
In this case the only quadratic invariants are $x_1x_3$, $x_2x_3$, and $x_4^2$.
Thus $\phi_2$ is a linear combination of $x_1x_3$, $x_2x_3$, $x_4^2$.
Since $I=4$ and $b=0$, by the classification of terminal singularities 
$\phi$ contains either $x_1x_3$ or $x_2x_3$ (see \cite{Reid-YPG1987}). Then the eliminating $x_4$ we see that 
$(X^\sharp\ni P^\sharp)$ is a hypersurface singularity 
whose equation has quadratic part of rank $\ge 2$. 
In this case $(X^\sharp\ni P^\sharp)$ is a Du Val singularity of type \type{A_n}, a contradiction.

Now let $\mult_0\psi>1$. Then 
\begin{equation*}
\operatorname{emb}\dim (X^{\sharp}\ni P^{\sharp})=-K_{(X^{\sharp}\ni P^{\sharp})}
=\mult(X^{\sharp}\ni P^{\sharp})=4=I
\end{equation*}
(see Remark~\ref{canonical-cover-mult}).
According to \cite[Theorem 4.57]{Kollar-Mori-1988} the curve given by quadratic parts 
of $\phi$ and $\psi$ in the projectivization 
$\PP(T_{P^{\sharp},\XX^{\sharp}})$
of the 
tangent space is a smooth elliptic curve.
According to the classification \cite{Reid-YPG1987} there are two cases.

\subsection*{Case: $b=0$ and $\phi$ is an invariant.}
In this case, as above, $\phi_2$ and $\psi_2$ are linear combination of $x_1x_3$, $x_2x_3$, $x_4^2$
and so $\{\phi_2=\psi_2=0\}$ cannot be smooth, a contradiction.

\subsection*{Case: $b=2$ and $\phi$ is a semi-invariant of weight $2$.}
Then, up to linear coordinate change of $x_1$ and $x_2$, we can write 
\begin{equation*}
\phi_2=a_1 x_1x_2+a_2 x_1^2+a_3 x_2^2+a_4 x_3^2,\qquad 
\psi_2=b_1x_1x_3+b_2x_2x_3+b_3 x_4^2. 
\end{equation*}
Since $\phi_2=\psi_2=0$ defines a smooth curve,
$a_1 x_1x_2+a_2 x_1^2+a_3 x_2^2$ has no multiple factors, so up to linear coordinate change of $x_1$ and $x_2$
we may assume that 
$\phi_2=x_1x_2+ x_3^2$.
Similarly, $b_1,\, b_2,\, b_3\neq 0$. Then easy computations (see e.g \cite[7.7.1]{Kollar-Mori-1992}) show that 
$(X^\sharp\ni P^\sharp)$ is a singularity of type $[2;[2],[4]^2]$.
This contradicts our assumption.
\end{case}

\begin{case}
Now we assume that $(P\in \XX)$ is \textit{strictly canonical}.
Let $\upgamma : \tilde \XX\to \XX$ be the \textit{crepant blowup} of $(P\in \XX)$.
By definition $\tilde \XX$ has only $\QQ$-factorial terminal singularities
and $K_{\tilde \XX}=\upgamma^* K_{\XX}$.
Let $E=\sum E_i$ be the exceptional divisor and let $\tilde X$
be the proper transform of $X$. Since the pair $(\XX,X)$ is log canonical,
we can write 
\begin{equation}
\label{equation-discrepancies-deformation-space}
K_{\tilde \XX}+\tilde X+E=\upgamma^* (K_{\XX}+X),\qquad \upgamma^* X=\tilde X+E.
\end{equation}
The pair $(\tilde \XX, \tilde X+E)$ is log canonical
and $\tilde \XX$ has isolated singularities, so $\tilde X+E$ has generically 
normal crossings along $\tilde X\cap E$. Hence $C:=\tilde X\cap E$ is a reduced curve.
By the adjunction we have 
\begin{equation*}
K_{\tilde X}+C= (K_{\tilde \XX}+\tilde X+E)|_{\tilde X}= \upgamma^* (K_{\XX}+X)|_{\tilde X}
=\upgamma_{\tilde X}^* K_{X}.
\end{equation*}
Thus $\upgamma_{\tilde X}: \tilde X\to X$ is a dlt modification of $(X\ni P)$.
Since $I\ge 3$, there is only one divisor over $P\in X$ with discrepancy $-1$.
Hence this divisor coincides with $C$ and so $C$ is irreducible and smooth.
In particular, $\tilde X$ meets only one component of $E$. 

\begin{claim*}
Let $Q\in \tilde \XX$ be a point at which $E$ is not Cartier.
Then in a neighborhood of $Q$ we have $\tilde X\sim K_{\tilde \XX}$.
In particular, $Q\in C$.
\end{claim*}

\begin{proof}
We are going to apply the results of \cite{Kawakita-index}.
The extraction $\upgamma: \tilde \XX\to \XX$ can be decomposed in a sequence 
of elementary crepant blowups 
\begin{equation*}
\upgamma_i: \XX_{i+1} \longrightarrow \XX_{i}, \quad i=0,\dots, N,
\end{equation*}
where $\XX_0=\XX$, $\XX_{N}=\tilde \XX$, for $i=1,\dots, N$
each $\XX_{i}$ has only $\QQ$-factorial canonical singularities,
and the $\upgamma_i$-exceptional divisor $E_{i+1,i}$ is irreducible.
\cite{Kawakita-index} defined a divisor $F$ with $\Supp(F)=E$ on $\XX_{N}=\tilde \XX$ 
inductively: $F_1=E_{1,0}$ on $\XX_1$ and $F_{i+1}= \lceil \upgamma_i^*F_i\rceil$.
In our case, by \eqref{equation-discrepancies-deformation-space} the divisor $F$ is reduced, i.e. 
$F=E$. Then by \cite[Theorem 4.2]{Kawakita-index} we have $E\sim -K_{\tilde \XX}$
near $Q$. Since $\tilde X+E$ is Cartier, 
$\tilde X\sim K_{\tilde \XX}$
near $Q$.
\end{proof}

\begin{claim*} 
The singular locus of $\tilde \XX$ near $C$ consists of three 
cyclic quotient singularities $P_1$, $P_2$, $P_3$ 
of types $\frac 1{r_i}(1,-1, b_i)$, where $\gcd (b_i, r_i)=1$ and 
$(r_1,r_2,r_3)=(3,3,3)$, $(2,4,4)$, and $(2,3,6)$ in cases $I=3$, $4$, $6$,
respectively.
\end{claim*}
\begin{proof}
Let $P_1,\, P_2,\, P_3\in C$ be singular points 
of $\tilde X$. 
Since $C=\tilde X\cap E$ is smooth, $E$ is not Cartier at $P_i$'s. 
Hence $P_1,\, P_2,\, P_3\in \tilde \XX$ are (terminal) non-Gorenstein points.
Now the 
assertion follows by \cite[Theorem 4.2]{Kawakita-index}.
\end{proof}
Therefore, 
$P_i\in \tilde X$ is a point of index $r_i/\gcd(2,r_i)$. Hence the 
singularities of $\tilde X$ are of types $\frac 1{r_i}(1,1)$.
This proves Proposition~\ref{proposition-nDV}.\qedhere
\end{case}
\end{proof}

\begin{case}
Let $(X\ni P)$ be a normal surface singularity admitting a $\QQ$-Gorenstein smoothing
$\ff: \XX\to\XD$. Let $M_P$ be the Milnor fiber of $\ff$. Thus,
$(M_P,\partial M_P )$ is a smooth 4-manifold with boundary. 
Denote by $\upmu_P = \bb_2 (M_P)$ the Milnor number of the smoothing.
In our case we have (see \cite{Greuel-Steenbrink-1983})
\begin{equation}
\label{equation-Milnor-fiber-muP}
\bb_1 (M_P) = 0,\qquad \Eu(M_P) = 1 + \upmu_ P.
\end{equation}
\end{case}

\begin{sproposition}[cf. {\cite[\S 2.3]{Hacking-Prokhorov-2010}}]
\label{proposition-computation-muP}
Let $(X\ni P)$ be a rational surface singularity. 
Assume that $(X\ni P)$ admits a $\QQ$-Gorenstein smoothing. Then
for the Milnor number
$\upmu_P$ we have
\begin{equation}
\label{equation-computation-muP}
\upmu_P=\K_{(X,P)}^2+\varsigma_P. 
\end{equation}
\end{sproposition}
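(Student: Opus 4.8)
The plan is to compute $\Eu(M_P)$ via Noether's formula, using the $\QQ$-Gorenstein hypothesis to transport the computation from the minimal resolution of $X$ to a smooth general fibre of a smoothing. By~\eqref{equation-Milnor-fiber-muP} the claimed identity $\upmu_P=\K_{(X,P)}^2+\varsigma_P$ is equivalent to $\Eu(M_P)=\K^2+\varsigma_P+1$. Let $\eta:Y\to X$ be the minimal resolution with exceptional divisor $E=\sum_{i=1}^{\varsigma_P}E_i$, and write $K_Y=\eta^*K_X-\Delta$, so that $\K^2=\Delta^2$. Since $(X\ni P)$ is rational, the dual graph of $E$ is a tree of smooth rational curves, whence $\Eu(E)=2\varsigma_P-(\varsigma_P-1)=\varsigma_P+1$; so it suffices to establish
\begin{equation*}
\Eu(M_P)-\Eu(E)=\Delta^2 .
\end{equation*}

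Next I would globalize. Embed the germ $(X\ni P)$ into a projective surface $\bar X$ that is smooth away from $P$ and extend the given $\QQ$-Gorenstein smoothing to a flat projective $\QQ$-Gorenstein family $\bar\ff:\bar\XX\to\XD$ with $\bar\XX_0=\bar X$; then $\bar\XX_t$ is a smooth projective surface for $t\ne0$ and $\Sing\bar\XX\subseteq\{P\}$, because $\bar X$ is a Cartier divisor on $\bar\XX$. The fibres $\bar X=\bar\ff^*(0)$ and $\bar\XX_t=\bar\ff^*(t)$ are linearly equivalent Cartier divisors on $\bar\XX$, so by adjunction and the projection formula
\begin{equation*}
K_{\bar\XX_t}^2=(K_{\bar\XX}+\bar\XX_t)^2\cdot\bar\XX_t=(K_{\bar\XX}+\bar X)^2\cdot\bar X=K_{\bar X}^2
\end{equation*}
(conservation of $K^2$ in $\QQ$-Gorenstein families). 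Flatness and properness of $\bar\ff$ give $\chi(\OOO_{\bar\XX_t})=\chi(\OOO_{\bar X})$, and rationality of $(X\ni P)$ gives $\chi(\OOO_{\bar X})=\chi(\OOO_Y)$, where $Y\to\bar X$ denotes the minimal resolution, which agrees with $\eta$ near $P$ and is an isomorphism elsewhere. Applying Noether's formula on the smooth projective surfaces $Y$ and $\bar\XX_t$ and using $K_Y^2=K_{\bar X}^2+\Delta^2=K_{\bar\XX_t}^2+\Delta^2$, I obtain $\Eu(\bar\XX_t)-\Eu(Y)=K_Y^2-K_{\bar\XX_t}^2=\Delta^2$.

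Finally I would localize this identity at $P$. The morphism $\bar\ff$ is smooth and $Y\to\bar X$ is an isomorphism away from $P$, so the only discrepancy between $\bar\XX_t$, $Y$ and $\bar X$ is concentrated in a small ball $B\ni P$: replacing the contractible cone neighbourhood $B\cap\bar X$ by the local Milnor fibre $M_P$, resp.\ by a tubular neighbourhood of $E$ in $Y$, and using additivity of the Euler characteristic together with $\Eu(\partial B)=0$, yields $\Eu(\bar\XX_t)=\Eu(\bar X)-1+\Eu(M_P)$ and $\Eu(Y)=\Eu(\bar X)-1+\Eu(E)$; subtracting and comparing with the previous step gives $\Eu(M_P)-\Eu(E)=\Delta^2$, hence $\upmu_P=\K^2+\varsigma_P$ by~\eqref{equation-Milnor-fiber-muP} and $\Eu(E)=\varsigma_P+1$. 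The hard part is the globalization step, namely producing a projective $\QQ$-Gorenstein degeneration with prescribed isolated singularity $(X\ni P)$ and smooth general fibre; this can be arranged by choosing $\bar X$ so that the local first-order deformation is globally unobstructed, or avoided entirely by invoking a relative (localized) form of Noether's formula for the family $\XX\to\XD$, as in \cite[\S 2.3]{Hacking-Prokhorov-2010}. The remaining steps are routine intersection theory.
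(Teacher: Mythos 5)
Your argument is correct and is essentially the paper's own proof: globalize the germ to a projective surface with a single singularity and a $\QQ$-Gorenstein smoothing, apply Noether's formula to the minimal resolution and to a general fibre, use invariance of $K^2$ and $\chi(\OOO)$, and localize the Euler characteristics at $P$ via the Milnor fibre and the exceptional tree. The globalization step you rightly flag as the hard part is exactly what the paper outsources to \cite[Appendix]{Looijenga1986}, which produces the required projective surface $Z$ with unique singularity $(X\ni P)$ and a $\QQ$-Gorenstein smoothing.
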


\begin{proof}
Obviously, $\K_{(X,P)}^2+\varsigma_P$ depends only on the analytic type of the singularity $(X\ni P)$.
According to \cite[Appendix]{Looijenga1986}, for $(X\ni P)$ there exists a projective surface 
$Z$ with a unique singularity isomorphic to $(X\ni P)$ and a $\QQ$-Gorenstein smoothing
$\mathfrak Z/ (\mathfrak T\ni 0)$. 
Let $\eta:Y\to Z$ be the minimal resolution.
Write 
\begin{equation*}
K_{Y}=\eta^* K_Z-\Delta,\qquad 
K_{Y}^2=K_Z^2+\Delta^2. 
\end{equation*}
Let $Z'$ be the general fiber. Since 
\begin{equation*}
\Eu(Y)=\Eu(Z)+\varsigma_P,\qquad 
\chi(\OOO_{Y})=\chi(\OOO_Z),
\end{equation*}
by Noether's formula we have
\begin{multline*}
0=K_Y^2+\Eu(Y)-12\chi(\OOO_Z)=K_Z^2+\Delta^2+\Eu(Z)+\varsigma_P-12\chi(\OOO_{Z'})=
\\
=\Delta^2+\varsigma_P+\Eu(Z)+K_{Z'}^2-12\chi(\OOO_{Z'})=
\Delta^2+\varsigma_P+\Eu(Z)-\Eu(Z').
\end{multline*}
By \eqref{equation-Milnor-fiber-muP} we have 
$\upmu_P=\Delta_P^2+\varsigma_P$.
\end{proof}

\begin{scorollary}[see {\cite[Proposition 13]{Manetti-1991}}]
\label{Corollary-computation-muP-T}
If $(X\ni P)$ is a $\rT$-singularity of type $\frac1{dm^2}(1, dma-1)$, then 
\begin{equation}
\label{equation-computation-muP-T}
\upmu_P=d-1,\qquad-\K^2=\varsigma_P-d+1.
\end{equation}
\end{scorollary}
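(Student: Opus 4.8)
The plan is to reduce both identities in~\eqref{equation-computation-muP-T} to the single statement $\upmu_P=d-1$ and then to compute $\upmu_P$ after passing to the index one cover. By Proposition~\ref{proposition-computation-muP} (equality~\eqref{equation-computation-muP}) we have $\upmu_P=\K^2+\varsigma_P$, so the formula for $-\K^2$ follows from the one for $\upmu_P$; and a $\rT$-singularity does admit a $\QQ$-Gorenstein smoothing $\ff\colon\XX\to\XD$ by Theorem~\ref{classification-T-singularities}, so there is something to work with.

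Since $\gcd(dm^2,dma)=dm$, the singularity $(X\ni P)$ has index $m$, and its index one cover $(X^\sharp\ni P^\sharp)\to(X\ni P)$ is the quotient of $\CC^2$ by the order-$dm$ subgroup of $\mumu_{dm^2}$, which acts with weights $(1,-1)$; hence $(X^\sharp\ni P^\sharp)$ is the Du Val singularity of type \type{A_{dm-1}}, and $\mumu_m$ acts freely on $X^\sharp\setminus\{P^\sharp\}$. I would then argue, exactly as in~\ref{new-label} but with ``terminal'' and ``plt'' replacing ``canonical'' and ``lc'' — using Lemma~\ref{lemma-canonical} and the Cohen--Macaulayness of the total space $\XX$ — that $(\XX\ni P)$ again has index $m$, that its index one cover $\pi\colon(\XX^\sharp\ni P^\sharp)\to(\XX\ni P)$ is $\mumu_m$-Galois and étale over $\XX\setminus\{P\}$, and that over the Cartier divisor $X$ it restricts to the surface index one cover. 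Then $\XX^\sharp$ is a Gorenstein terminal threefold germ and $\XX^\sharp\to\XD$ is a smoothing of $X^\sharp$, i.e.\ of the Du Val singularity \type{A_{dm-1}}.

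Now I would compare Milnor fibres. For $t\neq0$ the points $P$ and $P^\sharp$ lie only on the central fibres, so over a small ball $\pi$ restricts to an étale degree-$m$ covering of Milnor fibres $M_{P^\sharp}\to M_P$, and therefore $\Eu(M_P)=\tfrac1m\,\Eu(M_{P^\sharp})$. On the other hand every smoothing of the Du Val singularity \type{A_{dm-1}} has Milnor number $dm-1$, since the base of its semiuniversal deformation is smooth with irreducible discriminant, so all of its smoothing Milnor fibres are diffeomorphic. Hence $\Eu(M_{P^\sharp})=dm$ by~\eqref{equation-Milnor-fiber-muP}, so $\Eu(M_P)=d$, and~\eqref{equation-Milnor-fiber-muP} once more gives $\upmu_P=\Eu(M_P)-1=d-1$.

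The step I expect to be the main obstacle is the compatibility of the two index one covers claimed in the second paragraph — equivalently, the fact that $\XX$ has index $m$ at $P$ and that $\pi$ is étale in codimension one — which is exactly where the Cohen--Macaulayness of $\XX$ and the normality of $X^\sharp$ are needed, precisely as in~\ref{new-label}; once this is granted, the rest is a short Euler-characteristic count. If one prefers to avoid topology, one can instead establish $-\K^2=\varsigma_P-d+1$ combinatorially from the dual graph: the non--Du Val $\rT$-singularities with a given $d$ are obtained from a single seed ($[4]$ for $d=1$, and $[3,2,\dots,2,3]$ with $d-2$ interior twos for $d\ge2$) by the two string operations $[b_1,\dots,b_r]\mapsto[2,b_1,\dots,b_{r-1},b_r+1]$ and its mirror, each of which raises $\varsigma_P$ by one and lowers $\K^2$ by one, while the Du Val members \type{A_{d-1}} have $\K^2=0$ and $\varsigma_P=d-1$; checking these base cases then finishes the proof, which in any case is essentially~\cite[Proposition~13]{Manetti-1991}.
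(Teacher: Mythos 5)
Your proposal is correct and follows the route the paper intends: the second identity is reduced to $\upmu_P=d-1$ via Proposition~\ref{proposition-computation-muP}, and $\upmu_P=d-1$ is exactly the content of the cited result of Manetti, whose proof is the index-one-cover argument you reconstruct (the Milnor fibre of the induced smoothing of the \type{A_{dm-1}} cover is an \'etale $\mumu_m$-cover of $M_P$, giving $\Eu(M_P)=dm/m=d$). The compatibility of the two index one covers that you flag as the main obstacle is handled precisely as in~\ref{new-label}, so nothing further is needed.
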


Proposition~\ref{proposition-computation-muP} implies the following.

\begin{scorollary}
\label{Corollary-mu-2}
Let $(X\ni P)$ be a strictly log canonical surface singularity of index $I>1$
admitting a $\QQ$-Gorenstein smoothing. Then 
\begin{equation}
\label{equation-computation-muP-nG}
\upmu_P=
\begin{cases}
4-\sum (n_i-3)&\text{in the case $(\mathrm{DV})$ with $I=2$,}
\\
13-n-\sum r_i&\text{in the case $(\mathrm{nDV})$.}
\end{cases}
\end{equation}
\end{scorollary}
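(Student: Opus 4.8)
The plan is to reduce the statement to the identity $\upmu_P=\K^2_{(X,P)}+\varsigma_P$ of Proposition~\ref{proposition-computation-muP}, and then to read off the two quantities $\K^2$ and $\varsigma_P$ from the classification of strictly log canonical singularities of index $>1$ together with the computations already carried out in Proposition~\ref{Proposition-computation-K2}. Since $(X\ni P)$ is rational (it has index $I>1$), Proposition~\ref{proposition-computation-muP} applies and gives $\upmu_P=\K^2+\varsigma_P$, so the whole argument is bookkeeping.

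First I would pin down which types can actually occur. By Theorem~\ref{theorem-classification-lc-singularities} a strictly log canonical surface singularity of index $I>1$ has $I\in\{2,3,4,6\}$: for $I=2$ it is of type $[n_1,\dots,n_s;[2]^4]$, which is of case~\type{(DV)} by Proposition~\ref{Proposition-computation-K2}; for $I\ge 3$ it is of type $\langle n;r_1,r_2,r_3;\varepsilon,\varepsilon,\varepsilon\rangle$ with $\varepsilon=\pm 1$. Since $(X\ni P)$ is assumed to admit a $\QQ$-Gorenstein smoothing, Proposition~\ref{proposition-nDV} forces the type to be $[n;[r_1],[r_2],[r_3]]$ in the latter case, i.e. $\varepsilon=1$, which is precisely case~\type{(nDV)}. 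Hence the two cases listed in the statement exhaust all possibilities, and in particular the cuspidal case ($\varepsilon=-1$, giving \type{(DV)} with $I\ge 3$) never arises here.

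Next I would count $\varsigma_P$, the number of exceptional curves of the minimal resolution, directly from the dual graph. For $[n_1,\dots,n_s;[2]^4]$ the graph is a chain of length $s$ with two further $(-2)$-vertices attached at each of its two ends, so $\varsigma_P=s+4$. For $[n;[r_1],[r_2],[r_3]]$ the graph is a star with one central vertex and three legs, each a single vertex, so $\varsigma_P=4$. Finally I would substitute the values of $\K^2$ from Proposition~\ref{Proposition-computation-K2}: in case~\type{(DV)} with $I=2$ one has $-\K^2=\sum(n_i-2)$, whence $\upmu_P=-\sum(n_i-2)+s+4=4-\sum(n_i-3)$; in case~\type{(nDV)} one has $-\K^2=n-9+\sum r_i$, whence $\upmu_P=9-n-\sum r_i+4=13-n-\sum r_i$. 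There is no genuine obstacle in this argument; the only points needing a little care are the appeal to Proposition~\ref{proposition-nDV} to discard the cuspidal case when $I\ge 3$, and getting the vertex counts in the two dual graphs right.
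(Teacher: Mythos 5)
Your proposal is correct and follows exactly the route the paper intends: the paper derives this corollary directly from Proposition~\ref{proposition-computation-muP} (i.e.\ $\upmu_P=\K^2+\varsigma_P$) combined with the values of $\K^2$ from Proposition~\ref{Proposition-computation-K2}, and your vertex counts $\varsigma_P=s+4$ and $\varsigma_P=4$ and the resulting arithmetic are all right. The appeal to Proposition~\ref{proposition-nDV} to rule out the case \type{(DV)} with $I\ge 3$ is a reasonable extra check of exhaustiveness, though not needed to verify the two displayed formulas themselves.
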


\begin{proof}[Proof of the classificational part of Theorem \xref{Theorem-Q-smoothings}]
Let 
\[
\pi:(X^{\sharp}\ni P^{\sharp})\to (X\ni P)
\]
be the index one cover.
A $\QQ$-Gorenstein smoothing $(X\ni P)$ is induced by an equivariant
smoothing of $(X^{\sharp}\ni P^{\sharp})$ (cf.~\ref{new-label}). 
In particular, $(X^{\sharp}\ni P^{\sharp})$ is smoothable.
Assume that $(X\ni P)$ is of type $[n_1,\dots,n_s;[2]^4]$ with $s>1$. Then
$(X^{\sharp}\ni P^{\sharp})$ is a cusp singularity. By \cite[Th. 5.6]{Wahl-1981} its smoothability implies 
\begin{equation*}
\mult (X^{\sharp}\ni P^{\sharp})\le\varsigma_{P^{\sharp}}+9.
\end{equation*}
Since $\varsigma_{P^{\sharp}}=2\varsigma_{P}-10$,
by Corollary~\ref{canonical-cover-Z2} and Remark~\ref{canonical-cover-mult}
we have
\begin{equation*}
2\sum\left(n_i-2\right)\le 2\varsigma_{P}-1,\quad\sum\left(n_i-3\right)\le 3.
\end{equation*}
In the case where $(X\ni P)$ is of type $[n;[2]^4]$ the singularity $(X^{\sharp}\ni P^{\sharp})$ is
simple elliptic. Then $\mult (X^{\sharp}\ni P^{\sharp})\le 9$ (see e.g. \cite[Ex. 6.4]{Looijenga-Wahl-1986}).
Hence $n\le 6$. 
In the case where $(X^{\sharp}\ni P^{\sharp})$ is of type $[n,[r_1], [r_2], [r_3]]$
the assertion follows from Corollary~\ref{Corollary-mu-2} because $\upmu_P\ge 0$. 
\end{proof}
The existence of $\QQ$-Gorenstein smoothings follows from 
examples and discussions in the next two sections.

\section{Examples of $\QQ$-Gorenstein smoothings}
\label{section-Examples}
\begin{proposition}[{\cite[Cor. 19]{Stevens1991a}}]
A rational surface singularity of index $2$ and multiplicity $4$ admits a $\QQ$-Gorenstein 
smoothing.
\end{proposition}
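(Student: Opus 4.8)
The plan is to push the question up to the index-one cover, where the singularity becomes Gorenstein and ``small'', and to produce an equivariant smoothing there.

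First I would pass to the index-one cover $\pi\colon (X^\sharp\ni P^\sharp)\to (X\ni P)$. Since the index equals $2$, this is a $\mumu_2$-quotient that is \'etale in codimension one, $K_{X^\sharp}$ is Cartier, and $\mumu_2$ acts on $X^\sharp\setminus\{P^\sharp\}$ freely. By the standard correspondence between $\QQ$-Gorenstein deformations of $(X\ni P)$ and $\mumu_2$-equivariant deformations of $(X^\sharp\ni P^\sharp)$ (this is exactly the local input used in the proof of the classificational part of Theorem~\ref{Theorem-Q-smoothings}; see also \cite{Kollar-ShB-1988}), it suffices to construct a one-parameter $\mumu_2$-equivariant deformation $\XX^\sharp\to\XD$ of $(X^\sharp\ni P^\sharp)$ whose general fibre is smooth and whose $\mumu_2$-fixed locus consists of the single point $P^\sharp$. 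Then $\XX^\sharp/\mumu_2\to\XD$ is a flat family with special fibre $X$ and smooth general fibre (the action on the smooth general fibre is free), and it is $\QQ$-Gorenstein because $K_{\XX^\sharp}$ is Cartier near $X^\sharp$ and hence descends to a $\QQ$-Cartier canonical divisor on the quotient.

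Next I would pin down $(X^\sharp\ni P^\sharp)$. Pulling back the relation $K_Y=\eta^*K_X-\Delta$ along $\pi$, which is unramified in codimension one, gives $-\K^2_{(X^\sharp\ni P^\sharp)}=2\,(-\K^2_{(X\ni P)})$ (cf.\ the computation in Corollary~\ref{canonical-cover-Z2}); the hypothesis $\mult(X\ni P)=4$ forces $-\K^2_{(X\ni P)}$ to be small, so $(X^\sharp\ni P^\sharp)$ is a Gorenstein surface singularity of small degree $-\K^2$. Running through the index-two rational surface singularities of multiplicity $4$ (they can be listed from their resolution graphs using Theorem~\ref{theorem-classification-lc-singularities} in the strictly log canonical range and Theorem~\ref{classification-T-singularities} in the log terminal range), one finds that $(X\ni P)$ is either a $\rT$-singularity, in which case $(X^\sharp\ni P^\sharp)$ is a Du Val singularity of type $\mathrm{A}$, or is of type $[n_1,\dots,n_s;[2]^4]$ with $\sum(n_i-2)$ small, in which case $(X^\sharp\ni P^\sharp)$ is a simple elliptic or cusp singularity with $-\tilde C^2\le 4$, comfortably inside the smoothability range ($\mathrm{Ell}_n$ is smoothable for $n\le 9$).

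Finally, in each of these finitely many cases I would exhibit the equivariant smoothing explicitly. For a $\rT$-singularity this is already Theorem~\ref{classification-T-singularities}; for instance $\frac14(1,1)$ has index-one cover $\{uv=w^2\}$ with $\mumu_2$ acting by $(u,v,w)\mapsto(-u,-v,-w)$, and $\{uv=w^2+t\}$ is a $\mumu_2$-equivariant smoothing with fixed locus $\{P^\sharp\}$. For the simple elliptic / cusp covers one works on an equation (say the affine-cone model of $\mathrm{Ell}_n$) and checks that the induced $\mumu_2$-action on the versal base has a fixed smoothing direction lying off the fixed locus, i.e.\ that there is a $\mumu_2$-invariant smoothing; for these low-degree singularities this is a direct computation. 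The delicate point throughout is this last equivariance assertion: one must show that the smoothing component of $(X^\sharp\ni P^\sharp)$ is $\mumu_2$-stable and meets the locus of smooth fibres in a $\mumu_2$-fixed point, so that the quotient family has a genuinely smooth --- not merely quotient-singular --- general fibre. Once the cover and the action are fixed this is a finite verification; alternatively one may sidestep the local bookkeeping entirely and invoke Stevens' classification of the smoothing components of rational quadruple points via partial resolutions \cite{Stevens1991a}, from which the $\QQ$-Gorenstein case in index $2$ is read off directly.
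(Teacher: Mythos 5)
The paper does not actually prove this statement: it is quoted verbatim from Stevens \cite[Cor.~19]{Stevens1991a}, so your closing remark --- that one may ``sidestep the local bookkeeping entirely and invoke Stevens' classification'' --- is not an alternative to the paper's argument, it \emph{is} the paper's argument. If you are content to rest on that citation you are done, and everything preceding it is superfluous.

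Read as an independent proof, however, your argument has a genuine gap exactly at the step you yourself flag as ``delicate''. The reduction to a $\mumu_2$-equivariant smoothing of the index-one cover is standard and correct, and the identification of the cover as a Du Val, simple elliptic or cusp singularity of small degree is fine modulo a tacit assumption: ``rational of index $2$ and multiplicity $4$'' does not by itself place the singularity inside the log canonical range, so Theorems~\ref{theorem-classification-lc-singularities} and~\ref{classification-T-singularities} do not \emph{a priori} exhaust the cases --- ruling out the remaining rational quadruple points again requires Stevens' classification. The decisive problem is the last step. Smoothability of the cover (e.g.\ $\mathrm{Ell}_n$ with $n\le 9$) is only a necessary condition; what must be produced is a $\mumu_2$-invariant arc in the versal base along which the fibre becomes smooth, and you assert this to be ``a direct computation'' / ``a finite verification'' without performing it. That verification is precisely the content of Stevens' theorem and is far from routine: the versal deformation spaces of these elliptic and cusp singularities have several irreducible components, only some of which are smoothing components, and one must show that a smoothing component is stable under the involution and contains an invariant smoothing direction. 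The paper's own Theorem~\ref{Theorem-Q-smoothings} records that the identical question one multiplicity higher (types $[n_1,\dots,n_s;[2]^4]$ with $\sum(n_i-2)=3$ outside three special cases, i.e.\ multiplicity $5$) is \emph{open}; this shows that the step you are skipping is where all of the difficulty lives, and it cannot be dismissed as bookkeeping.
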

Recall that for any rational surface singularity $(X\ni P)$ 
one has 
\[
\mult (X\ni P)=-\mathcal{Z}^2,
\]
where $\mathcal{Z}$ is the fundamental cycle on the minimal resolution
(see \cite[Cor. 6]{Artin-1966}). 
\begin{slemma}
Let $(X\ni P)$ be  a log canonical surface singularity  of type 
$[n_1,\dots,n_s; [2]^4]$.
Then 
\begin{equation*}
-\mathcal{Z}^2=\max \bigl(4, 2+\sum (n_i-2)\bigr)=\max\bigl(4, 2-\K^2\bigr).
\end{equation*}
\end{slemma}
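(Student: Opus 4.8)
The plan is to compute the fundamental cycle $\mathcal{Z}$ explicitly on the minimal resolution of a singularity with dual graph $[n_1,\dots,n_s; [2]^4]$, i.e. a chain of vertices of weights $n_1,\dots,n_s$ with two extra $(-2)$-tails attached at each of the two ends. Recall that $\mathcal{Z}=\sum z_i E_i$ is the smallest nonzero effective cycle with $\mathcal{Z}\cdot E_i\le 0$ for every component $E_i$. First I would verify directly that the cycle $\mathcal{Z}$ with coefficient $1$ on every $E_i$ along the central chain $E_1,\dots,E_s$ and coefficient $1$ on each of the four $(-2)$-end-vertices is $\eta$-numerically nonpositive: for an interior chain vertex $E_i$ with $-E_i^2=n_i$ one gets $\mathcal{Z}\cdot E_i = -n_i + (\text{number of neighbours}) = -n_i+2\le 0$ since $n_i\ge 2$; at the two ends the extra two tails contribute, giving $\mathcal{Z}\cdot E_1 = -n_1 + 1 + 2 = 3-n_1$ (one central neighbour, two tails) and similarly at $E_s$; and on each tail-vertex $\mathcal{Z}\cdot E = -2 + (\text{number of neighbours}) = 0$. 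By minimality of the fundamental cycle and a standard connectivity/monotonicity argument this $\mathcal{Z}$ is indeed \emph{the} fundamental cycle provided all these intersection numbers are $\le 0$, which forces no constraint beyond $n_i\ge 2$ (always true); when some $n_i=2$ the cycle is still computed by the same recipe since one still has $\mathcal{Z}\cdot E_i\le 0$, so no adjustment is needed. (One should double-check the degenerate case $s=1$, where $E_1$ carries all four tails and $\mathcal{Z}\cdot E_1 = -n_1+4 = 4-n_1$.)

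Next I would compute $-\mathcal{Z}^2 = -\mathcal{Z}\cdot\bigl(\sum z_i E_i\bigr) = -\sum_i z_i\,(\mathcal{Z}\cdot E_i)$. With all $z_i=1$ this is $-\sum_i (\mathcal{Z}\cdot E_i)$, and from the computation above only the vertices where $\mathcal{Z}\cdot E_i\ne 0$ contribute: in the case $s\ge 2$ these are the two end vertices, giving $-\mathcal{Z}^2 = (n_1-3)+(n_s-3)$; in the case $s=1$ it is $n_1-4$. This is not yet the claimed formula, which suggests the fundamental cycle is \emph{not} reduced when some $n_i$ is large, so the real content is to determine the correct multiplicities $z_i$. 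Indeed, if $n_1$ is large then $\mathcal{Z}\cdot E_1 = 3-n_1$ is very negative, and minimality of $\mathcal{Z}$ does not require reducedness; what minimality requires is that $\mathcal Z$ be the least cycle dominating $E_{\mathrm{any}}$ under Laufer's algorithm. So I would instead run Laufer's algorithm honestly: start from a single reduced vertex and successively add $E_i$ whenever the current cycle has strictly positive intersection with $E_i$, until numerically nonpositive. The hard part will be bookkeeping the algorithm through the chain when the $n_i$ can be large — the coefficients $z_i$ along the central chain can exceed $1$, and one must track how the ``excess'' at each $(-n_i)$-vertex propagates.

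A cleaner route, which I expect to be the one that actually works, is to observe that $\mathcal{Z}^2$ for this graph is computed by the same kind of local contribution analysis as $\K^2$ in Lemma~\ref{Lemma-computation-K2}, namely by pushing forward from the dlt modification $\tilde X$ of Corollary~\ref{proposition-classification-lc-singularities}(ii). On $\tilde X$ the four $(-2)$-tails are contracted to four $\rA_1$ points lying on the chain $\tilde C = \sum_{i=1}^s \tilde C_i$, and $-\K^2$ pushes forward from the formula $-\K^2 = \sum(n_i-2)$ established in the proof of Proposition~\ref{Proposition-computation-K2}. Since $\mathcal{Z}$ restricted to the resolution of each $\rA_1$ tail behaves exactly as in Lemma~\ref{Lemma-computation-K2}(i) with $m=1$ ($\delta = 1/2$ there), I would relate $-\mathcal{Z}^2$ on $Y$ to the corresponding self-intersection of the pushed-forward cycle on $\tilde X$, computing the correction from the four tails. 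This reduces the problem to computing the fundamental cycle of a chain $[n_1,\dots,n_s]$ with two extra "half-point" contributions at the ends, and for a chain the fundamental cycle and its self-intersection are classically known. Carrying out that bookkeeping, the two regimes emerge: when $\sum(n_i-2)\le 2$ the reduced cycle already works and $-\mathcal{Z}^2 = 4$ (the four tails dominate, since $2+\sum(n_i-2)\le 4$); when $\sum(n_i-2)\ge 2$ the extra multiplicities along the chain push $-\mathcal{Z}^2$ up to $2+\sum(n_i-2) = 2-\K^2$. Taking the maximum of the two gives exactly $\max\bigl(4,\,2-\K^2\bigr)$, and combined with $-\K^2 = \sum(n_i-2)$ this is the stated equality. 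The main obstacle is the combinatorial verification that Laufer's algorithm produces precisely these multiplicities for arbitrary $s$ and arbitrary $n_i\ge 2$; everything else is formal.
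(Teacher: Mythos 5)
Your overall strategy (compute the fundamental cycle on the resolution graph and evaluate its self-intersection) is the right one, and it is close in spirit to the paper's argument, which identifies $\mathcal Z$ with $\lceil\Delta\rceil$ in one regime and with $2\Delta$ in the other and then reads off $\mathcal Z^2$ from the known value $\Delta^2=\K^2$. But your execution contains several concrete errors that derail the computation. First, a tail vertex $F_j$ has exactly one neighbour, so $Z_{\mathrm{red}}\cdot F_j=-2+1=-1$, not $0$; these four contributions of $+1$ to $-Z_{\mathrm{red}}^2$ are exactly where the ``$+2$'' (in fact $+4-2$) in the answer comes from, and you drop them. Second, your own formula $Z_{\mathrm{red}}\cdot E_1=3-n_1$ shows that anti-nefness at an end vertex requires $n_1\ge 3$ (and $n_1\ge4$ when $s=1$), not merely $n_1\ge2$, so the reduced cycle is \emph{not} always the fundamental cycle; it fails to be precisely when an end weight equals $2$. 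Third, once the tail and interior contributions are restored, one finds $-Z_{\mathrm{red}}^2=2+\sum(n_i-2)$ already, so in the regime where $Z_{\mathrm{red}}$ is anti-nef the claimed formula holds with no non-reduced correction at all. You have the two regimes backwards: the fundamental cycle is reduced exactly when the ends are heavy ($n_1,n_s\ge3$, giving $-\mathcal Z^2=2+\sum(n_i-2)\ge4$), and it acquires multiplicities $\ge2$ along the central chain exactly when the ends are light (e.g.\ $\mathcal Z=2\Delta$ when $\sum(n_i-2)=1$, giving $-\mathcal Z^2=4$), not ``when some $n_i$ is large.''

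Beyond these sign and bookkeeping errors, the proof is not actually completed: you explicitly defer ``the combinatorial verification that Laufer's algorithm produces precisely these multiplicities for arbitrary $s$ and arbitrary $n_i\ge2$,'' which is the entire content of the lemma in the mixed cases (e.g.\ $[4,2;[2]^4]$, where neither of your two announced regimes applies cleanly). The clean way to finish is the paper's: observe that $\mathcal Z\ge\lceil\Delta\rceil$ always, that $\lceil\Delta\rceil^2=\Delta^2-2$ and $(2\Delta)^2=4\Delta^2$, check anti-nefness of the candidate cycle case by case, and handle the remaining mixed cases by one or two further steps of Laufer's algorithm, each of which changes $-\mathcal Z^2$ by an explicitly computable amount. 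As written, your argument would produce the wrong value $-Z_{\mathrm{red}}^2=(n_1-3)+(n_s-3)$ and then search for corrections in the wrong regime.
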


\begin{proof}
If either $s\ge 2$ and $n_1,\, n_s\ge 3$ or $s=1$ and $n_1\ge 4$, then 
$\mathcal{Z}=\lceil\Delta\rceil$ and so $\mathcal{Z}^2=\Delta^2-2=-n$
by Proposition~\ref{Proposition-computation-K2}. If $\sum (n_i-2)=1$, 
then $\mathcal{Z}=2\Delta$ and so 
$\mathcal{Z}^2=4\Delta^2=-4$.
\end{proof}

\begin{scorollary}
A log canonical singularity of type $[n_1,\dots, n_s; [2]^4]$
with $\sum (n_i-2)\le 2$ admits a $\QQ$-Gorenstein 
smoothing.
\end{scorollary}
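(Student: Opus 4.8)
The plan is to show that, under the stated bound, $(X\ni P)$ is a rational surface singularity of index $2$ and multiplicity $4$, and then invoke \cite[Cor.~19]{Stevens1991a} (the Proposition at the beginning of this section).

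First I would observe that, by Theorem~\ref{theorem-classification-lc-singularities}, a singularity of type $[n_1,\dots,n_s; [2]^4]$ is strictly log canonical of index $I=2$; being strictly log canonical of index $>1$, it is rational (a strictly log canonical surface singularity fails to be rational only in index $1$). Next I would apply the Lemma just proved, which gives
\[
-\mathcal Z^2=\max\bigl(4,\ 2+\textstyle\sum (n_i-2)\bigr).
\]
Under the hypothesis $\sum (n_i-2)\le 2$ we have $2+\sum (n_i-2)\le 4$, so the maximum is $4$; hence, by Artin's formula, $\mult(X\ni P)=-\mathcal Z^2=4$. Thus $(X\ni P)$ is a rational surface singularity of index $2$ and multiplicity $4$.

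The conclusion then follows directly from \cite[Cor.~19]{Stevens1991a}, which produces a $\QQ$-Gorenstein smoothing of any such singularity. There is no genuine obstacle here: the only point to verify is that the arithmetic inequality $2+\sum (n_i-2)\le 4$ is exactly the stated hypothesis, and that the fundamental cycle computation of the preceding Lemma then pins the multiplicity down to the value $4$ required by Stevens' statement — both of which are immediate from the displayed formula.
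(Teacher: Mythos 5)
Your argument is correct and is exactly the intended derivation: the paper states this corollary without proof precisely because it follows immediately from the classification (index $2$, hence rational), Artin's formula $\mult=-\mathcal Z^2$, the preceding Lemma giving $-\mathcal Z^2=\max\bigl(4,\,2+\sum(n_i-2)\bigr)=4$ under the hypothesis, and Stevens' result quoted at the start of the section. Nothing to add.
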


Let us consider explicit examples.

\begin{sexample}
\label{example-singularity-index-2-m}
Let $\XX=\CC^3/\mumu_2(1,1,1)$ and 
\begin{equation*}
\ff: \XX \to \CC,\quad (x_1,x_2,x_3) \mapsto x_1^2+\left(x_2^2+c_1 x_3^{2k}\right)
\left(x_3^2+c_2 x_2^{2m}\right),
\end{equation*}
where $k, m\ge 1$ and $c_1,\, c_2$ are constants.
The central fiber $X=\XX_0$ is a log canonical singularity 
of type 
\begin{equation*}
[\underbrace{2,\dots,2}_{k-1},3,\underbrace{2,\dots,2}_{m-1}; [2]^4].
\end{equation*}
Indeed, the $\frac12(1,1,1)$-blowup of $X'\to X\ni 0$ has irreducible exceptional divisor.
If $k,m \ge 3$, then 
the singular locus if $X'$ consists of two
Du Val singularities of types \type{D_{k+1}} and \type{D_{m+1}}. Other cases are similar.
\end{sexample}

\begin{sexample}
\label{example-2}
Let $\mumu_2$ act on $\CC^4_{x_1,\dots,x_4}$ diagonally 
with weights $(1,1,1,0)$ and let $\phi(x_1,\dots,x_4)$ and $\psi(x_1,\dots,x_4)$
be invariants such that $\mult_0\phi=\mult_0\psi=2$ and the quadratic parts
$\phi_{(2)}$, $\psi_{(2)}$ define a smooth elliptic curve in $\PP^3$.
Let $\XX:=\{\phi=0\}/\mumu_2(1,1,1,0)$. 
Consider the family
\begin{equation*}
\ff: \XX \longrightarrow\CC \qquad (x_1,\dots,x_4)\longmapsto \psi. 
\end{equation*}
The central fiber $X=\XX_0$ is a log canonical singularity 
of type $[4; [2]^4]$. 
\end{sexample}

\begin{sproposition}[{\cite[Ex. 4.2]{deJong-vanStraten-1992}}]
Singularities of types $[5; [2]^4]$,
$[4,3; [2]^4]$, and $[3,3,3; [2]^4]$ admit $\QQ$-Gorenstein smoothings.
\end{sproposition}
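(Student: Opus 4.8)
The plan is to place each of these germs within the deformation theory of rational surface singularities with reduced fundamental cycle developed in \cite{deJong-vanStraten-1992}, to extract a one-parameter smoothing from their Example~4.2, and then to promote it to a $\QQ$-Gorenstein smoothing by descending from the index-one cover. I would first record the invariants: each of $[5;[2]^4]$, $[4,3;[2]^4]$ and $[3,3,3;[2]^4]$ has index $2$, hence is a rational singularity (Theorem~\ref{theorem-classification-lc-singularities}), and a direct computation on the dual graph shows that in all three cases the fundamental cycle $\mathcal Z$ on the minimal resolution is reduced with $\mathcal Z^2=-5$, so $\mult(X\ni P)=5$ and $\operatorname{emb}\dim(X\ni P)=6$ by \cite{Artin-1966}. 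Thus all three are minimal (equivalently, sandwiched) surface singularities, and the deformation theory of \cite{deJong-vanStraten-1992} applies: one realizes $(X\ni P)$ as the normalized blow-up of a decorated plane-curve germ $(C,\ell)$ attached to $(X\ni P)$, deformations of $(X\ni P)$ are governed by deformations of $(C,\ell)$, and a deformation in which $C$ degenerates to a curve with only ordinary multiple points carrying suitably generic decoration data yields a smoothing $\XX\to\XD$. For each of the three dual graphs the associated decorated curve is small enough to handle by hand, and this is exactly what \cite[Ex.~4.2]{deJong-vanStraten-1992} carries out. (As a consistency check, Corollary~\ref{Corollary-mu-2} gives $\upmu_P=2,3,4$ respectively, matching $\K^2+\varsigma_P=-3+\varsigma_P$ with $\varsigma_P=5,6,7$.)

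It remains to see that the smoothing is $\QQ$-Gorenstein, i.e. that $2K_{\XX}$ is Cartier near $P$. For this I would pass to the index-one cover $\pi\colon(X^\sharp\ni P^\sharp)\to(X\ni P)$, which by \ref{index-one-cover} and Theorem~\ref{theorem-classification-lc-singularities} is a simple elliptic or cusp singularity, of multiplicity $-\K^2_{(X^\sharp\ni P^\sharp)}=2(n-2)=6$ by Corollary~\ref{canonical-cover-Z2} and Remark~\ref{canonical-cover-mult} (it is $\mathrm{Ell}_6$ for $[5;[2]^4]$ and a cusp in the other two cases). Recall (cf. \ref{new-label} and the proof of the classificational part of Theorem~\ref{Theorem-Q-smoothings}) that a $\QQ$-Gorenstein smoothing of $(X\ni P)$ is precisely the quotient by $\mumu_2$ of a $\mumu_2$-equivariant smoothing $\XX^\sharp\to\XD$ of $(X^\sharp\ni P^\sharp)$; conversely, the $\mumu_2$-quotient of any such equivariant smoothing is $\QQ$-Gorenstein, because $\XX^\sharp$ — the total space of a flat family with Gorenstein fibres over a smooth curve — is Gorenstein, so $K_{\XX^\sharp}$ is Cartier, and the $\mumu_2$-action on $\XX^\sharp$ is free in codimension one, whence $2K_{\XX}=2K_{\XX^\sharp/\mumu_2}$ is Cartier near $P$. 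It therefore suffices to know that the smoothing produced by \cite[Ex.~4.2]{deJong-vanStraten-1992} is induced by a $\mumu_2$-equivariant smoothing of $(X^\sharp\ni P^\sharp)$; this one checks by tracking the $\mumu_2$-symmetry through the construction, or directly from the resulting equations for $\XX$, exactly as for the examples in Section~\ref{section-Examples}.

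The essential difficulty is the explicit, computational content of \cite[Ex.~4.2]{deJong-vanStraten-1992}: writing down the decorated plane curve attached to each of the three dual graphs and producing a deformation of it that genuinely separates the branches into general position and smooths the singularity, together with verifying that this can be carried out $\mumu_2$-equivariantly — equivalently, that the accompanying deformation of the index-one cover is flat with smooth general fibre. Once a $\mumu_2$-equivariant smoothing of the double cover is in hand, the $\QQ$-Gorenstein property of the quotient is formal.
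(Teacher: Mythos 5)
The paper offers no proof of this proposition: it is imported wholesale from \cite[Ex.~4.2]{deJong-vanStraten-1992}, so there is no internal argument to compare yours with. Your peripheral computations are correct (all three germs are rational of index $2$ with reduced fundamental cycle and $\mathcal Z^2=-5$, the Milnor numbers are $2,3,4$, and the index-one cover is $\mathrm{Ell}_6$, resp.\ a cusp, of multiplicity $6$), and your reduction of the $\QQ$-Gorenstein condition to a $\mumu_2$-equivariant smoothing of the index-one cover is the right framework, consistent with~\ref{new-label} and with the explicit examples of Sect.~\ref{section-Examples}.

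There is nevertheless a genuine gap, which you yourself flag as ``the essential difficulty'': nothing is actually constructed. Both the existence of the smoothing and its equivariance are deferred to the citation, and that deferred step is the entire content of the statement. Moreover, the route you sketch for it cannot work as described. The machinery of sandwiched singularities and decorated plane-curve germs belongs to de Jong--van Straten's later work on deformations of sandwiched singularities, not to the 1992 paper cited here, whose construction operates directly on the $\mumu_2$-cover; and that machinery produces smoothings with no control over which smoothing component one lands on. One cannot ``extract a one-parameter smoothing\dots and then promote it to a $\QQ$-Gorenstein smoothing'': the $\QQ$-Gorenstein smoothings form particular components of the base of the semiuniversal deformation, and $\K^2\in\ZZ$ is necessary but not sufficient (as the paper stresses after Theorem~\ref{classification-T-singularities}). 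Concretely, every hypothesis you use --- rationality, reduced fundamental cycle, minimality/sandwichedness --- holds equally for $[7;[2]^4]$, which has integral $\K^2=-5$ yet by Theorem~\ref{Theorem-Q-smoothings} admits no $\QQ$-Gorenstein smoothing at all, its index-one cover $\mathrm{Ell}_{10}$ being non-smoothable. So the smoothing must be built inside the equivariant deformation theory of the cover from the outset, and your plan postpones precisely that construction. (A minor further point: freeness of the $\mumu_2$-action in codimension one on $\XX^\sharp$ makes $2K_{\XX}$ Cartier but does not by itself make the quotient family a smoothing; one also needs the fixed locus to lie in the central fibre so that nearby fibres of the quotient are smooth.)
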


Now consider singularities of index $>2$.

\begin{example}[cf. {\cite[6.7.1]{Kollar-Mori-1992}}]
\label{example-singularity-index-3}
Let $\XX=\CC^3/\mumu_3(1,1,2)$ and 
\begin{equation*}
\ff: (x_1,x_2,x_3) \longmapsto x_1^3+x_2^3+x_3^3.
\end{equation*}
The central fiber $X=\XX_0$ is a log canonical singularity 
of type $[2; [3]^3]$.
\end{example}

\begin{example}
\label{example-singularity-index-3-canonical}
Let $\XX=\CC^3/\mumu_9(1,4,7)$ and
\begin{equation*}
\ff: (x_1,x_2,x_3) \longmapsto x_1x_2^2+x_2x_3^2+x_3x_1^2.
\end{equation*}
The central fiber $X=\XX_0$ is a log canonical singularity of type $[4; [3]^3]$. 
The total space has a canonical singularity at the origin.
\end{example}

\begin{example}[cf. {\cite[7.7.1]{Kollar-Mori-1992}}]
\label{example-4}
\label{example-singularity-index-4}
Let 
\begin{equation*}
\XX=\{x_1x_2+x_3^2+x_4^{2k+1}=0\}/\mumu_4(1,1,-1,2),\qquad k\ge 1. 
\end{equation*}
Consider the family
\begin{equation*}
\ff: \XX \longrightarrow\CC,\quad 
(x_1,\dots,x_4)\longmapsto x_4^2+x_3(x_1+x_2)+\psi_{\ge 3} (x_1,\dots,x_4),
\end{equation*}
where $\psi_{\ge 3}$ is an invariant with $\mult(\psi_{\ge 3})\ge 3$.
The central fiber $X=\XX_0$ is a log canonical singularity 
of type $[2; [2], [4]^2]$.
The singularity of the total space is terminal of type $\mathrm{cAx/4}$.
\end{example}

\begin{example}
\label{example-singularity-index-4-canonical}
Let $\XX:=\{x_1x_2+x_3^2+x_4^2=0\}/ \mumu_8(1,5,3,7)$.
Consider the family
\begin{equation*}
\ff: \XX \longrightarrow\CC,\quad (x_1,\dots,x_4)\longmapsto x_1x_4+x_2x_3.
\end{equation*}
The central fiber $X=\XX_0$ is a log canonical singularity 
of type $[3; [2], [4]^2]$.
The singularity of the total 
space $\XX$ is canonical \cite{Hayakawa-Takeuchi-1987}.
\end{example}

More examples of $\QQ$-Gorenstein smoothings will be given in the next section.

\section{Indices of canonical singularities}
\label{sect-Indices}
\begin{notation}
Let $S=S_d\subset \PP^d$ be a smooth del Pezzo surface of degree $d\ge 3$.
Let $Z$ be the affine cone over $S$ and let $z\in Z$ be its vertex. 
Let $\delta: \tilde Z\to Z$ be the blowup along the maximal ideal of $z$ and let $\tilde S\subset \tilde Z$
be the exceptional divisor. 
The affine variety $Z$ can be viewed as the spectrum of the anti-canonical graded algebra:
\begin{equation*}
Z=\Spec R(-K_S), \qquad R(-K_S):= \bigoplus_{n\ge 0} H^0(S, \OOO_S(-nK_S))
\end{equation*}
and the variety $\tilde Z$ can be viewed as the total space $\Tot(\LLL)$
of the line bundle $\LLL:=\OOO_S(K_S)$. Here $\tilde S$ is the negative section.
Denote by $\gamma: \tilde Z \to S$ the natural projection. 
\end{notation}

\begin{lemma}
\label{lemma-crepant}
The map $\delta$ is a crepant morphism and
$(Z\ni z)$ is a canonical singularity.
\end{lemma}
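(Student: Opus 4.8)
The plan is to analyze the canonical class of $\tilde{Z}$ directly using the description $\tilde{Z}=\operatorname{Tot}(\LLL)$ with $\LLL=\OOO_S(K_S)$, together with the identification of $\tilde{S}$ as the negative section. First I would compute $K_{\tilde{Z}}$ by the standard formula for the canonical bundle of a total space of a line bundle: if $\gamma:\tilde{Z}=\operatorname{Tot}(\LLL)\to S$ is the projection, then $K_{\tilde{Z}}=\gamma^*(K_S\otimes\LLL^{-1})=\gamma^*(K_S\otimes\OOO_S(-K_S))=\OOO_{\tilde{Z}}$, so $\tilde{Z}$ has trivial canonical class; in particular $\tilde{Z}$ is Gorenstein and has canonical (indeed terminal away from $\tilde{S}$, smooth in fact) singularities, being smooth here. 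Alternatively one identifies $\tilde{Z}$ with the blowup of the vertex and argues via the exceptional divisor $\tilde{S}\cong S$ directly.

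The key step is then to verify that $\delta$ is crepant, i.e. $K_{\tilde{Z}}=\delta^*K_Z$. Since $Z$ is the affine cone over the anticanonically embedded $S$, it is $\QQ$-Gorenstein (in fact Gorenstein) with $K_Z$ the restriction of a suitable multiple of the hyperplane class; concretely $\omega_Z$ is generated by a form whose pullback to $\tilde{Z}$ I would track via the grading. I would write $K_{\tilde{Z}}=\delta^*K_Z+a\tilde{S}$ for some rational $a$ (the discrepancy), and then pin down $a$. Having already shown $K_{\tilde{Z}}\sim 0$, it remains to show $\delta^*K_Z\sim 0$ as well, equivalently $K_Z\sim 0$ near $z$; this follows because the cone $Z$ over an anticanonically embedded del Pezzo surface of degree $d\ge 3$ is Gorenstein with trivial dualizing sheaf — the standard fact that $\omega_Z\cong\OOO_Z$ for such cones, traceable to the formula $\omega_Z=\bigoplus_n H^0(S,\omega_S\otimes\OOO_S(-nK_S))=\bigoplus_n H^0(S,\OOO_S(-(n+1)K_S))$ which is a free rank-one module over $R(-K_S)$ generated in degree $-1$. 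Hence $a=0$ and $\delta$ is crepant.

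Finally, to conclude that $(Z\ni z)$ is canonical, I would invoke that canonicity is inherited under crepant modifications: since $\tilde{Z}$ is smooth (it is the total space of a line bundle over a smooth surface) and $\delta$ is crepant and proper birational, every exceptional divisor over $z$ has discrepancy $\ge 0$, so $(Z\ni z)$ is canonical. One should note $(Z\ni z)$ is not terminal — the divisor $\tilde{S}$ has discrepancy exactly $0$ — which is consistent with the later use of this cone as an essentially canonical singularity.

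The main obstacle I expect is bookkeeping the grading/normalization to confirm that $\delta^*K_Z\sim0$ precisely (rather than up to a nonzero multiple of $\tilde{S}$): one must be careful that $Z$ is embedded by the \emph{complete} anticanonical linear system (which holds for $d\ge 3$, where $-K_S$ is very ample and projectively normal), so that $Z$ really is the cone whose dualizing sheaf is computed by the shift-by-one formula above. Once that normalization is fixed, the discrepancy computation $a=0$ is immediate, and the rest is formal.
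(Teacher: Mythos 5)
Your argument is correct, but it takes a different route from the paper's. The paper proves the lemma purely by adjunction on the exceptional divisor: writing $K_{\tilde Z}=\delta^*K_Z+a\tilde S$, it computes $K_{\tilde S}=(K_{\tilde Z}+\tilde S)|_{\tilde S}=(a+1)\tilde S|_{\tilde S}$ and uses the identification $\OOO_{\tilde S}(\tilde S)\simeq\OOO_S(K_S)\simeq\OOO_S(-1)\simeq\OOO_{\tilde S}(K_{\tilde S})$ coming from $\tilde S$ being the negative section of $\Tot(\OOO_S(K_S))$, which forces $a=0$ in one line; it never needs to compute $K_{\tilde Z}$ or $K_Z$ individually. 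You instead show both classes are trivial: $K_{\tilde Z}=\gamma^*(K_S\otimes\LLL^{-1})\sim 0$ by the total-space formula, and $\omega_Z\cong\OOO_Z$ by the graded description of the dualizing module of the cone, whence $a\tilde S\sim 0$ and $a=0$ since $\tilde S$ is not numerically trivial. Your route is longer but buys more: it establishes explicitly that $Z$ is Gorenstein with $K_Z\sim 0$ (so the expression $\delta^*K_Z$ is a priori meaningful, a point the paper leaves implicit), and it correctly isolates projective normality of the anticanonical embedding for $d\ge 3$ as the hypothesis making $Z=\Spec R(-K_S)$. Two small remarks: your shift in the dualizing module should read $\omega_S\otimes\OOO_S(-nK_S)=\OOO_S(-(n-1)K_S)$, not $\OOO_S(-(n+1)K_S)$ — this is exactly the normalization issue you flagged and does not affect the conclusion that the module is free of rank one; and your final step (smooth $\tilde Z$ plus crepant $\delta$ plus discrepancy $0$ for $\tilde S$ implies $(Z\ni z)$ is canonical but not terminal) is the standard inheritance of canonicity under crepant morphisms and is fine.
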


\begin{proof}
Write $K_{\tilde Z}=\delta^* K_Z+a \tilde S$.
Then
\begin{equation*}
K_{\tilde S}= (K_{\tilde Z}+\tilde S) |_{\tilde S}= (a+1)\tilde S|_{\tilde S}.
\end{equation*}
Under the natural identification $S=\tilde S$ one has 
$\OOO_{\tilde S}(K_{\tilde S})\simeq \OOO_{S}(-1)\simeq \OOO_{\tilde S}(\tilde S)$. Hence, $a=0$.
\end{proof}

\begin{construction}
Assume that $S$ admits an action $\varsigma: G\to \Aut(S)$ of a finite group $G$.
The action naturally extends to
an action on the algebra $R(-K_S)$, the cone $Z$, and its blowup $\tilde Z$.
We assume that 
\begin{enumerate}
\renewcommand\labelenumi{{\rm (\Alph{enumi})}}
\renewcommand\theenumi{{\rm (\Alph{enumi})}}
\item 
\label{condition-1}
$G\simeq \mumu_I$ is a cyclic group of order $I$,
\item
\label{condition-2}
the action $G$ on $S$ is free in codimension one, and 
\item
\label{condition-3}
the quotient 
$S/G$ has only Du Val singularities. 
\end{enumerate}

Let $G_P$ be the stabilizer of a point $P\in S$. Since $\LLL=\OOO_S(K_S)$, 
the fiber $\LLL_P$ of $\gamma: \tilde Z=\Tot(\LLL)\to S$ is naturally identified with 
$\wedge ^2 T_{P,S}^\vee$, where $T_{P,S}$ is the tangent space to $S$ at $P$.
By our assumptions~\ref{condition-2} and~\ref{condition-3}, in suitable analytic coordinates $x_1,x_2$ near $P$, the action 
of $G_P$ is given by 
\begin{equation}
\label{equation-DuVal-action}
(x_1,\, x_2) \longmapsto (\zeta_{I_P}^{b_P}\cdot x_1,\, \zeta_{I_P}^{-b_P}\cdot x_2), 
\end{equation}
where $\zeta_{I_P}$ is a primitive $I_P$-th root of unity,
$\gcd(I_P, b_P)=1$, and
$I_P$ is the order of $G_P$. Therefore,
the action of $G_P$ on $\LLL_P\simeq \wedge ^2 T_{P,S}^\vee$
is trivial. Let $\tilde P:= \LLL_P\cap \tilde S$.

The algebra $R(-K_S)$ admits also a natural $\CC^*$-action 
compatible with the grading. Thus $\gamma: \tilde Z\to S$ is a 
$\CC^*$-equivariant $\mathbb A^1$-bundle, where $\CC^*$-action on $S$ is trivial
and 
the induced action $\lambda: \CC^*\to \Aut (\tilde Z)$ is just 
multiplication in fibers. Fix an embedding $G=\mumu_I\subset \CC^*$.
Then two actions $\varsigma$ and $\lambda$ commute and so we can
define a new action of $G$ on $\tilde Z$ by 
\begin{equation} 
\label{equation-index-character}
\varsigma'(\upalpha)=\lambda (\upalpha) \varsigma(\upalpha), \qquad \upalpha\in G.
\end{equation}
Take local coordinates 
$x_1,x_2,x_3$ in a neighborhood of $\tilde P\in \tilde Z$ compatible with the decomposition 
$T_{\tilde P, \tilde Z}=T_{\tilde P, \tilde S}\oplus T_{\tilde P, \LLL_P}$ of the tangent space 
and \eqref{equation-DuVal-action}.
Then
the action of $G_P$ is given by 
\begin{equation}
\label{equation-terminal-action}
(x_1, x_2, x_3) \longmapsto (\zeta_{I_P}^{b_P}\cdot x_1, \zeta_{I_P}^{-b_P}\cdot x_2, 
\zeta_{I_P}^{a_P}\cdot x_3),\quad \gcd(a_P,I_P)=1.
\end{equation}

\begin{claim}
\label{claim-canonical-terminal}
The quotient 
$\tilde \XX:=\tilde Z/ \varsigma'(G)$ has only terminal singularities.
\end{claim}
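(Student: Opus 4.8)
The plan is to verify the terminality of $\tilde\XX = \tilde Z/\varsigma'(G)$ by a local analysis at the image of each point $\tilde P \in \tilde Z$, using the explicit linearized action~\eqref{equation-terminal-action} together with the Reid--Tai criterion for terminal cyclic quotient singularities. Away from the exceptional section $\tilde S$ the morphism $\gamma : \tilde Z \to S$ is, fiberwise, multiplication by $\upalpha \in G$ composed with the free-in-codimension-one action $\varsigma$, so the only points of $\tilde Z$ with nontrivial stabilizer under $\varsigma'(G)$ lie over the (finitely many) points $P \in S$ with $G_P \neq 1$; and among those, the relevant ones for producing singularities are the points $\tilde P = \LLL_P \cap \tilde S$ on the negative section. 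So the first step is to reduce the claim to checking terminality of the cyclic quotient singularity $(\tilde Z \ni \tilde P)/G_P$, where $G_P \simeq \mumu_{I_P}$ acts by~\eqref{equation-terminal-action}.

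The second step is the root-of-unity bookkeeping. The action of a generator of $G_P$ on the coordinates $(x_1,x_2,x_3)$ is by the weights $(b_P, -b_P, a_P)$ modulo $I_P$, with $\gcd(b_P, I_P) = \gcd(a_P, I_P) = 1$. Since $\gcd(b_P, I_P) = 1$ we may rescale so that this is the cyclic quotient singularity of type $\tfrac{1}{I_P}(1, -1, a_P')$ for a suitable $a_P'$ with $\gcd(a_P', I_P) = 1$ — that is, a singularity of the form $\tfrac1r(1,-1,a)$. Such singularities are exactly the terminal cyclic quotient threefold singularities (they satisfy the Reid--Tai / Terminal Lemma condition automatically: for each nontrivial power the weighted sum of fractional parts of the first two coordinates already equals $1$), so the quotient is terminal at the image of $\tilde P$.

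The third step is to observe that at a point $P \in S$ with $G_P \neq 1$ lying off the section — there are none distinct from the fixed pattern, but for safety — or at a point where only the $\CC^*$-part acts, the action on $\tilde Z$ is free in a punctured neighborhood since the $\lambda$-action on fibers is free away from $\tilde S$; hence no further singularities are introduced. Combining, $\tilde\XX$ has only terminal cyclic quotient singularities of type $\tfrac1r(1,-1,a)$, one over each $P \in S$ with $G_P \neq 1$.

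The main obstacle I expect is purely the verification that the twist in~\eqref{equation-index-character} does land the third weight $a_P$ in the range $\gcd(a_P, I_P) = 1$ — i.e.\ that $\varsigma'$ acts on the normal direction to $\tilde S$ by a \emph{primitive} character of $G_P$ — rather than an already-established fact; this is where assumption~\ref{condition-3} (that $S/G$ is Du Val, forcing the tangential action to be of the balanced form~\eqref{equation-DuVal-action}) and the choice of embedding $G = \mumu_I \subset \CC^*$ are both used, and one must check that the $\lambda$-weight contributes a generator of $\mathbb{Z}/I_P$ when restricted to $G_P$. Once that primitivity is in hand the rest is the standard identification of $\tfrac1r(1,-1,a)$ as terminal.
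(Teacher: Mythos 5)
Your proposal is correct and follows essentially the same route as the paper: the only points with non-trivial $\varsigma'(G)$-stabilizer lie on the negative section $\tilde S$ (since $\varsigma$ acts trivially on $\LLL_P\simeq\wedge^2T_{P,S}^\vee$ while $\lambda$ acts there by a primitive character of $G_P$), and each image point is a cyclic quotient singularity of type $\frac1{I_P}(b_P,-b_P,a_P)$ with $\gcd(b_P,I_P)=\gcd(a_P,I_P)=1$, which is terminal by the classification of terminal cyclic quotients. The primitivity of the normal character that you flag as the "main obstacle" is exactly what the paper builds into \eqref{equation-terminal-action}, for the reason you give.
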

\begin{proof}
All the points of $\tilde Z$ with non-trivial stabilizers lie on the negative section $\tilde S$.
The image of such a point $\tilde P$ on $\tilde \XX$ is 
a cyclic quotient singularity of type $\frac 1{I_P}(b_P,-b_P, a_P)$ by \eqref{equation-terminal-action}.
\end{proof}

By the universal property of quotients, 
there is a contraction $\varphi: \tilde \XX\to \XX$ contracting $E$ to a point, say $o$,
where $\XX:=Z/G$ and $E:=\tilde S/G$.
Thus we have the following diagram:
\begin{equation}
\label{equation-diagram-square}
\vcenter{
\xymatrix@C=10pt
{
&&\tilde S \ar@{}[r]|-*[@]{\subset}\ar[d] & \tilde Z\ar[d]^\delta\ar[rr]\ar @/^/ [dlll]_(.6){\gamma}|(.43)\hole&
& \tilde \XX\ar[d]^{\varphi}\ar@{}[r]|-*[@]{\supset}& E \ar[d]
\\
S&&z\ar@{}[r]|-*[@]{\in} &Z\ar[rr]^{\pi} && \XX\ar@{}[r]|-*[@]{\ni}& o
}}
\end{equation}
\end{construction}

\begin{proposition}
\label{proposition-main-construction}
$(\XX\ni o)$ is an isolated canonical non-terminal singularity of index $|G|$.
\end{proposition}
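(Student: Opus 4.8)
The plan is to verify the three assertions — canonical, non-terminal, index $|G|$ — one at a time, using the diagram~\eqref{equation-diagram-square} and the results already established. First I would record that $\XX=Z/G$ is normal and has an isolated singularity at $o$: away from $o$ the variety $Z\setminus\{z\}$ is smooth, the $G$-action there is free in codimension one by~\ref{condition-2}, and the quotient $(Z\setminus\{z\})/G$ is smooth in codimension one with only the Du Val points coming from the stabilizers $G_P$, $P\in S$; but those Du Val points lie on $E\setminus\{o\}$ only after blowup, so on $\XX$ itself the singular locus is exactly $\{o\}$ — I would spell this out by noting that the $G_P$-action~\eqref{equation-DuVal-action} on a transverse slice to the cone direction is a Du Val $\rA$-type action, and combining this with freeness along the cone rulings. (Here I should be a little careful: the claim is that $\XX$ is singular \emph{only} at $o$, which uses that the $\CC^*$-orbits through points of $Z\setminus\{z\}$ meet the locus of non-trivial stabilizers in a set of codimension one in $Z$, so after quotienting the generic point of each such orbit-closure is still a smooth point.)

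Next, for canonicity I would use the crepant morphism $\delta:\tilde Z\to Z$ from Lemma~\ref{lemma-crepant}: $K_{\tilde Z}=\delta^*K_Z$, and $(Z\ni z)$ is canonical. Since $\varsigma'(G)$ acts on $\tilde Z$ and $\tilde\XX=\tilde Z/\varsigma'(G)$ has only terminal singularities by Claim~\ref{claim-canonical-terminal}, I would pass to the quotient. The subtle point is that $\varsigma'$ differs from the ``geometric'' action $\varsigma$ by the fiberwise $\CC^*$-twist $\lambda$, so the quotient map $\pi:Z\to\XX=Z/G$ and the quotient map $\tilde Z\to\tilde\XX=\tilde Z/\varsigma'(G)$ are quotients by \emph{different} lifts of $G$; nevertheless, because $\lambda$ acts trivially on the base $S$ and hence on $\tilde S$, on the open set where $G$ acts freely the two quotients agree, and $\varphi:\tilde\XX\to\XX$ is a birational morphism contracting $E$ to $o$. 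I would then compute the discrepancy of $E$: since $\delta$ is crepant and the ramification of $\tilde Z\to\tilde\XX$ is concentrated on $\tilde S$ with the action~\eqref{equation-terminal-action} on the cotangent directions being $\frac1{I_P}(b_P,-b_P,a_P)$, a Hurwitz/ramification-formula computation gives $K_{\tilde\XX}=\varphi^*K_\XX+aE$ with $a\ge 0$; combined with $\tilde\XX$ terminal and the standard discrepancy patching argument (a singularity is canonical iff it admits a partial resolution with non-negative discrepancies and terminal ambient space), this yields that $(\XX\ni o)$ is canonical.

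Non-terminality is the decisive point, and I expect it to be the main obstacle. The assertion is that $E$ is a \emph{crepant} divisor over $o$, i.e. $a=0$ in $K_{\tilde\XX}=\varphi^*K_\XX+aE$. I would argue by adjunction on $E$: since $E=\tilde S/G$ and $\tilde S\cong S$, and since the $G$-action on each line $\LLL_P$ (the fiber normal direction) is \emph{trivial} by the computation preceding~\eqref{equation-terminal-action} (this is where assumptions~\ref{condition-2}–\ref{condition-3} force $\det=1$ on the tangent plane, hence triviality on $\wedge^2 T^\vee=\LLL_P$), the normal bundle and canonical bundle of $E$ in $\tilde\XX$ are inherited from those of $\tilde S$ in $\tilde Z$ up to the quotient by $G$ acting on $S=\tilde S$. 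Concretely, $(K_{\tilde\XX}+E)|_E$ can be computed via the different: $K_E+\Diff_E(0)=(K_{\tilde\XX}+E)|_E$, and the different collects precisely the fractional contributions $(1-1/I_P)\tilde P$ from the quotient singular points, while on $\tilde S$ one has $\OOO_{\tilde S}(K_{\tilde S})\simeq\OOO_{\tilde S}(\tilde S)$ from Lemma~\ref{lemma-crepant}. Pushing this down through $\tilde S\to E$ and matching the fractional terms against the self-intersection shows $(K_{\tilde\XX}+E)|_E=\varphi^*(K_\XX)|_E$, i.e. $a=0$, so $E$ is crepant over $o$ and $(\XX\ni o)$ is canonical but not terminal. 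Finally, for the index: $IK_\XX$ is Cartier because $K_Z$ is Cartier (the cone is Gorenstein, $S$ being a smooth del Pezzo) and $\pi:Z\to\XX$ is the quotient by $\mumu_I$ acting via a faithful character on a generator of $K_Z$ near $z$ — the twist~\eqref{equation-index-character} is engineered exactly so that this character has order $I$ — hence no smaller multiple of $K_\XX$ descends to a Cartier divisor; I would make the order-$I$ claim precise by exhibiting the $G$-weight on $\OOO_Z(K_Z)$ at $z$ and checking it is primitive, using that $\lambda$ acts with weight $1$ on fibers of $\LLL$ so $\varsigma'$ acts with a weight differing from the $\varsigma$-weight by $1$, which together with~\ref{condition-3} (Du Val quotient, so $\varsigma$ acts with determinant-one weights, contributing $0$ to the canonical character) pins the order down to exactly $I$.
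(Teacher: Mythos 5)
Your overall architecture is the right one and matches the paper's: exhibit $\varphi:\tilde\XX\to\XX$ as a crepant partial resolution with $\tilde\XX$ terminal (Claim~\ref{claim-canonical-terminal}) and $E$ a crepant divisor over $o$, then compute the index. But the central step --- crepancy of $\varphi$ --- is where your execution goes wrong, and the error has a single source: you have not registered that the twisted action $\varsigma'$ is free \emph{in codimension one} on $\tilde Z$ (indeed, its non-free locus is the finite set of points $\tilde P\in\tilde S$ lying over points of $S$ with non-trivial stabilizer, precisely because $\varsigma$ acts trivially on $\LLL_P$ while $\lambda(\upalpha)\neq 1$ for $\upalpha\neq 1$), and free on all of $Z\setminus\{z\}$. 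Consequently there is \emph{no} divisorial ramification for $\tilde Z\to\tilde\XX$ or $Z\to\XX$: your statement that ``the ramification of $\tilde Z\to\tilde\XX$ is concentrated on $\tilde S$'' is false, and the Hurwitz formula gives $K_{\tilde Z}=p^*K_{\tilde\XX}$ and $K_Z=q^*K_\XX$ on the nose, so $K_{\tilde Z}=\delta^*K_Z$ descends immediately to $K_{\tilde\XX}=\varphi^*K_\XX$, i.e.\ $a=0$ in one line. This is exactly the paper's argument. Your substitute route ($a\ge 0$ by Hurwitz, then $a=0$ by adjunction on $E$) is not only unnecessary but, as written, does not typecheck: $E$ is a surface in the threefold $\tilde\XX$, so $\Diff_E(0)$ is a $\QQ$-divisor (a curve) on $E$, not a sum of fractional points $(1-1/I_P)\tilde P$; and since $\tilde\XX$ has only isolated singularities, $K_{\tilde\XX}+E$ is Cartier in codimension two along $E$ and $\Diff_E(0)=0$. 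The adjunction route can be salvaged ($p^*K_E=K_S=p^*(E|_E)$ forces $a=0$ since $K_S\not\equiv 0$), but there are no ``fractional terms to match.'' The same misunderstanding infects your isolatedness paragraph: $\XX\setminus\{o\}$ is smooth simply because $\varsigma'(G)$ acts freely on the smooth variety $Z\setminus\{z\}$; the Du Val/terminal quotient points live on $E\subset\tilde\XX$ and all map to $o$, so they say nothing about $\XX\setminus\{o\}$, and the remark about $\CC^*$-orbits meeting the non-free locus in codimension one does not establish anything.

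On the index your sketch is essentially sound, though you should be aware of where the fixed-point input enters. The paper argues that the index equals $\lcm|G_P|$ (via the crepant $\varphi$ and the local indices $I_P$ of the points $\frac1{I_P}(b_P,-b_P,a_P)$ on $E$) and then invokes the holomorphic Lefschetz fixed point formula to produce a point with $G_P=G$. Your computation of the weight of $\varsigma'$ on a semi-invariant generator of $\omega_{Z,z}$ is a legitimate alternative, but to evaluate that weight as ``($\varsigma$-weight $0$) plus ($\lambda$-weight $1$)'' in local coordinates you are implicitly sitting at a point $\tilde P$ fixed by all of $G$ --- which again requires the Lefschetz argument --- unless you instead phrase it via the graded isomorphism $\omega_{R(-K_S)}\simeq R(-K_S)(1)$, where the generator sits in internal degree one and $\lambda$ alone contributes a primitive character. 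Either way, make one of these two mechanisms explicit; as written the order-$I$ claim is asserted rather than proved.
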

\begin{proof}
Since the action $\varsigma'$ is free in codimension one, 
the contraction $\varphi$ is crepant by Lemma~\ref{lemma-crepant}. The index of $(\XX\ni o)$
is equal to the l.c.m. 
of $|G_P|$ for $P\in S$. On the other hand, by the holomorphic Lefschetz fixed point formula
$G$ has a fixed point on $S$. Hence, $G=G_P$ for some $P$.
\end{proof}

\begin{case}
\label{examples-index}
Now we construct explicit examples of del Pezzo surfaces 
with cyclic group actions satisfying the conditions~\ref{condition-1}-\ref{condition-3}.
\end{case}

\begin{sexample}
\label{example-deg=6}
Recall that a del Pezzo surface $S$ of degree $6$ is 
unique up to isomorphism and can be given in $\PP^1_{u_0:u_1}\times \PP^1_{v_0: v_1}\times \PP^1_{w_0: w_1}$ 
by the equation
\begin{equation*}
u_1v_1w_1 =u_0v_0w_0. 
\end{equation*}
Let $\upalpha\in \Aut(S)$ be the following element of order $6$:
\begin{equation*}
\upalpha: (u_0:u_1;\, v_0:v_1;\, w_0: w_1) \longmapsto (v_1:v_0;\, w_1:w_0;\, u_1:u_0).
\end{equation*}
Points with non-trivial stabilizers belong to one of three orbits
and representatives are the following:
\begin{itemize}
\item 
$P=(1:1;\ 1:1;\ 1:1)$,\quad $|G_{P}|=6$, 
\item
$Q=(1:\zeta_3;\ 1:\zeta_3;\ 1:\zeta_3)$,\quad $|G_{Q}|=3$,
\item
$R=(1:1;\ 1:-1;\ 1:-1)$,\quad $|G_{R}|=2$. 
\end{itemize}
It is easy to check that they give us Du Val points of type 
\type{A_5}, \type{A_2}, \type{A_1}, respectively.
\end{sexample}

\begin{sexample}
\label{example-deg=5}
A del Pezzo surface $S$ of degree $5$ is obtained by blowing up four 
points $P_1$, $P_2$, $P_3$, $P_4$ on $\PP^2$ in general position.
We may assume that $P_1 = (1: 0: 0)$, $P_2 = (0: 1: 0)$, $P_3 = (0: 0: 1)$, $P_4 = (1: 1: 1)$. 
Consider the following Cremona transformation:
\begin{equation*}
\upalpha: (u_ 0 : u_ 1 : u_ 2 ) \longmapsto (u _0 (u _2 - u_ 1 ): u _2 (u_ 0 - u _1 ): u _0 u_ 2 ).
\end{equation*}
It is easy to check that $\upalpha^5=\operatorname{id}$ and
the indeterminacy points are exactly $P_1$, $P_2$, $P_3$.
Thus $\upalpha$ lifts to an element $\upalpha\in \Aut(S)$ of order $5$.
\begin{claim*}
Let $\upalpha\in \Aut(S)$ be any element of order $5$.
Then $\upalpha$ has only isolated 
fixed points and the singular locus of the quotient $S/\langle a\rangle$ 
consists of two Du Val points of type \type{A_4}.
\end{claim*}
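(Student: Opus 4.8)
The plan is to analyze an order-$5$ automorphism $\upalpha$ of the degree-$5$ del Pezzo surface $S$ via the fact that $\Aut(S)\cong\mathfrak S_5$, so all elements of order $5$ are conjugate; hence it suffices to treat the explicit Cremona representative constructed above. First I would determine the fixed locus of $\upalpha$: since $\chi(\OOO_S)=1$ and $\upalpha$ has finite order, the holomorphic Lefschetz fixed point formula forces a fixed point, and one checks directly on the blown-up $\PP^2$ that the lift of the Cremona map fixes exactly two points (for instance $P_4=(1:1:1)$ and one further point coming from the geometry of the four base points), with no fixed curve — consistency with the topological Lefschetz number $\Eu(S^{\upalpha})=\sum(-1)^i\operatorname{tr}(\upalpha^*|H^i)$ and $\Eu(S)=7$ pins the count down. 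At each fixed point the differential of $\upalpha$ acts on the tangent plane with eigenvalues $(\zeta_5^{a},\zeta_5^{b})$, and since the action is free in codimension one (no fixed curve) we have $a,b\not\equiv0$; the quotient singularity is then cyclic of type $\tfrac15(a,b)$.

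Next I would identify the two quotient singularities as Du Val of type \type{A_4}. A cyclic quotient singularity $\tfrac15(a,b)$ is Du Val precisely when $a+b\equiv 0\pmod 5$, and then it is of type \type{A_4}; so the task reduces to showing the two tangent-eigenvalue pairs are each of the form $(\zeta_5^{a},\zeta_5^{-a})$. One clean way: the linearization at a fixed point acts on $\wedge^2 T_{P,S}=\OOO_S(K_S)_P$, and since $K_S$ is (a root of) a $G$-linearized divisor one can track the character there; alternatively, $K_{S/\langle\upalpha\rangle}$ is $\QQ$-Cartier and the ramification/Hurwitz relation $K_S=\pi^*K_{S/\langle\upalpha\rangle}$ (no ramification divisor, by codimension-one freeness) forces each quotient point to be crepant over $S$, i.e. canonical, hence Du Val; being cyclic of order $5$ it must be \type{A_4}. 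Finally, that there are exactly two such points follows from the Noether/orbifold Euler-characteristic bookkeeping: $7=\Eu(S)=5\cdot\Eu(S/\langle\upalpha\rangle\setminus\Sing)+\#\Sing$, and $\Eu$ of the smooth quotient is an integer, which together with $\#\Sing\le$ (number of fixed points) $=2$ and $\#\Sing\equiv 7\equiv 2\pmod 5$ gives exactly $\#\Sing=2$.

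I would organize the write-up as: (1) reduce to the explicit $\upalpha$ using $\Aut(S)=\mathfrak S_5$ and conjugacy of $5$-cycles; (2) compute the fixed points on the $\PP^2$ model and verify there are two, with no fixed curve, using the explicit formulas; (3) read off the local eigenvalues and invoke that codimension-one freeness $+$ crepantness (Lemma~\ref{lemma-crepant}-style adjunction, or the $K$-triviality of the cover) forces type \type{A_4}; (4) confirm the count of singular points by the Euler-characteristic relation.

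The main obstacle I anticipate is step (2)–(3): honestly exhibiting the two fixed points and computing the tangent-space characters of $\upalpha$ there in a way that cleanly yields $a+b\equiv0\pmod5$ rather than some non-Du Val type such as $\tfrac15(1,2)$. The slick resolution is to avoid explicit eigenvalue computation entirely and argue that $S\to S/\langle\upalpha\rangle$ is étale in codimension one, so $K_S=\pi^*K_{S/\langle\upalpha\rangle}$; since $-K_S$ is ample and the cover is crepant, $S/\langle\upalpha\rangle$ is a del Pezzo surface with canonical (hence Du Val) singularities, and a cyclic Du Val singularity of order exactly $5$ is necessarily \type{A_4}. Combined with the fixed-point count this finishes the claim without any delicate local linear algebra.
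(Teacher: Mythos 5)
The decisive step of the claim is not the fixed-point count (your Lefschetz/Euler-characteristic bookkeeping for that part is fine, and the paper does the same, after first excluding a fixed curve via the action on $\Pic(S)$: the characteristic polynomial is $t^5-1$, so any invariant curve is proportional to $-K_S$, hence ample, which quickly forces the action on the lines to be trivial). The decisive step is showing that the two tangent representations are $\frac15(1,-1)$ and $\frac15(2,-2)$ rather than, say, $\frac15(1,2)$. Your ``slick resolution'' of this step is wrong: for \emph{any} finite group acting on a smooth surface freely in codimension one, there is no ramification divisor and hence $K_S=\pi^*K_{S/G}$; this makes the quotient log terminal (klt), \emph{not} canonical. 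A cyclic quotient $\frac1r(1,a)$ always satisfies this crepant pullback relation away from the origin, yet it is Du Val only when $a\equiv-1\pmod r$. Concretely, $\mumu_5$ acting on $\PP^2$ with weights $(0,1,2)$ is free in codimension one and $-K$ of the quotient is ample, but the quotient has two singular points of type $\frac15(1,2)$, which are not Du Val. So ``\'etale in codimension one $+$ $-K_S$ ample $\Rightarrow$ Du Val'' is a non sequitur, and the type \type{A_4} conclusion does not follow from it.

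Your alternative suggestion --- tracking the character on $\wedge^2T_{P,S}=\OOO_S(K_S)_P$ --- also does not close the gap by itself: the action on that fiber is $\det(d\upalpha)^{-1}=\zeta_5^{-(a+b)}$, and you still need a global input to conclude $a+b\equiv0\pmod5$ at \emph{both} fixed points. The paper supplies exactly this input by applying the holomorphic Lefschetz fixed point formula to $\OOO_S$ (using $\chi(\OOO_S)=1$ and the vanishing of the trace on $H^{1},H^{2}$), obtaining
\begin{equation*}
1=(1-\zeta_5^{r})^{-1}(1-\zeta_5^{k})^{-1}+(1-\zeta_5^{l})^{-1}(1-\zeta_5^{m})^{-1},
\end{equation*}
and then checking with cyclotomic arithmetic that the only solution up to symmetry is $(r,k,l,m)=(1,4,2,3)$. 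This computation (or an equivalent explicit linearization at the two fixed points of your Cremona representative, which your reduction via $\Aut(S)\cong\mathfrak S_5$ would legitimately allow) is unavoidable; without it the claim that both points are of type \type{A_4} is unsupported.
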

\begin{proof}
For the characteristic polynomial of $\upalpha$ on $\Pic(S)$ 
there is only one possibility: $t^5-1$. Therefore, the eigenvalues of $\upalpha$ are
$1,\zeta_5,\dots, \zeta_5^4$. This implies that every invariant curve is linearly proportional (in $\Pic(S)$) to 
$-K_S$. In particular, this curve must be an ample divisor.

Assume that there is a curve of fixed points.
By the above it meets any line.
Since on $S$ there are at most two lines passing through a fixed point,
all the lines must be invariant. In this case $\upalpha$ 
acts on $S$ identically, a contradiction.

Thus the action of $\upalpha$ on $S$ is free in codimension one.
By the topological Lefschetz fixed point formula 
$\upalpha$ has exactly two fixed points, say $Q_1$ and $Q_2$.
We may assume that actions of $\upalpha$ in local coordinates near $Q_1$ and $Q_2$
are diagonal:
\begin{equation*}
(x_1,x_2) \longmapsto (\zeta_5^r x_1, \zeta_5^k x_2),\qquad 
(y_1,y_2) \longmapsto (\zeta_5^l y_1, \zeta_5^m y_2),
\end{equation*}
where $r$, $k$, $l$, $m$ are not divisible by $5$.
Then by the holomorphic Lefschetz fixed point formula
\begin{equation*}
1= (1-\zeta_5^r)^{-1}(1-\zeta_5^k)^{-1} + (1-\zeta_5^l)^{-1}(1-\zeta_5^m)^{-1}. 
\end{equation*}
Easy computations with cyclotomics show that up to permutations and modulo $5$
there is only one possibility: $r=1$, $k=4$, $l=2$, $m=3$.
This means that the quotient has only Du Val singularities of type \type {A_4}.
\end{proof}
\end{sexample}

\begin{sexample}
\label{example-P2} 
Let $\mumu_3$ act on $S=\PP^2$ diagonally with weights $(0,1,2)$.
The quotient has three Du Val singularities of type \type{A_2}.
\end{sexample}

\begin{sexample}
\label{example-P1P1}
Let $\mumu_4$ act on $S=\PP^1_{u_0:u_1}\times \PP^1_{v_0: v_1}$ by 
\begin{equation*}
(u_0:u_1;\ v_0: v_1) \longmapsto (v_0: v_1;\ u_1:u_0 ).
\end{equation*}
The quotient has three Du Val singularities of types \type{A_1}, \type{A_3}, \type{A_3}.
\end{sexample}

Note that in all examples above the group generated by $\upalpha^n$ 
also satisfies the conditions~\ref{condition-1}-\ref{condition-3}.
We summarize the above information in the following table.
Together with Proposition~\ref{proposition-main-construction} this proves Theorem~\ref{theorem-main-index}.
\par\medskip\noindent
\setlength{\extrarowheight}{6pt}
\begin{tabularx}{\textwidth}{c|c|c|c|c|l}
No. & $K_S^2$ & Ref. & $G$ & $I$ & \multicolumn1c{$\operatorname{Sing}(\tilde \XX)$} 
\\
\hline
\nrr
\label{smoothing-index=6}& $6$&~\ref{example-deg=6}& $\langle\upalpha\rangle$ & $6$ & 
$\frac16(1,-1,1)$, $\frac13(1,-1,1)$, $\frac12(1,1,1)$
\\
\nrr
\label{smoothing-index=5}&$5$&~\ref{example-deg=5}& $\langle\upalpha\rangle$ & $5$ & 
$\frac15(1,-1,1)$, $\frac15(2,-3,1)$
\\
\nrr
\label{smoothing-index=4}&$8$&~\ref{example-P1P1}& $\langle\upalpha\rangle$ & $4$ & 
$2\times \frac14(1,-1,1)$, $\frac12(1,1,1)$
\\
\nrr
\label{smoothing-index=3-deg=6}& $6$&\ref{example-deg=6}& $\langle\upalpha^2\rangle$ & $3$ & 
$3\times \frac13(1,-1,1)$
\\
\nrr
\label{smoothing-index=3-deg=9}& $9$&\ref{example-P2}& $\langle\upalpha\rangle$ & $3$ & 
$3\times \frac13(1,-1,1)$
\\
\nrr
\label{smoothing-index=2-deg=6}& $6$&\ref{example-deg=6}& $\langle\upalpha^3\rangle$ & $2$ & 
$4\times \frac12(1,1,1)$
\\
\nrr
\label{smoothing-index=2-deg=8}& $8$&\ref{example-P1P1}& $\langle\upalpha^2\rangle$ & $2$ & 
$4\times \frac12(1,1,1)$

\end{tabularx}
\par\medskip\noindent
Note that our table agrees with the corresponding one in \cite{Kawakita-index}.

Now we apply the above technique to construct examples of $\QQ$-Gorenstein smoothings.

\begin{theorem}
\label{theorem-Q-smmoothings-6}
Let $(X\ni o)$ be a surface log canonical singularity 
of one of the following types
\begin{equation*}
[2; [2,3,6]],\hspace{5pt} [3; [2,4,4]],\hspace{5pt} 
[n; [3,3,3]],\hspace{3pt} n=3,4,\hspace{5pt} [n; [2,2,2,2]],\hspace{3pt} n=5,6. 
\end{equation*}
Then $(X\ni o)$ admits a $\QQ$-Gorenstein smoothing.
\end{theorem}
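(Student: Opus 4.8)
The plan is to realize each of these log canonical singularities as the central fibre of an explicit $\QQ$-Gorenstein family, using the technique of $\S$\ref{sect-Indices} together with the deformation of the corresponding cone singularity. First, recall from Proposition~\ref{proposition-main-construction} that for a smooth del Pezzo surface $S$ of degree $d\ge 3$ carrying a cyclic action $\mumu_I$ satisfying \ref{condition-1}--\ref{condition-3}, the quotient $\XX=Z/\mumu_I$ of the anticanonical cone is an isolated canonical singularity of index $I$; and by Corollary~\ref{proposition-classification-lc-singularities} together with Proposition~\ref{Proposition-computation-K2}, the hyperplane section of $\XX$ (equivalently, the quotient by $\mumu_I$ of a general hyperplane section of $Z$, which is a smooth elliptic curve through the vertex) is precisely a strictly log canonical surface singularity of type \type{nDV}. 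Concretely, I would take a one-parameter family $\ff\colon\XX\to\CC$ given by a general invariant linear form $\psi$ on $Z$ (pulled back to $\XX$), so that $X=\XX_0$ is the $\mumu_I$-quotient of $\{\psi^\sharp=0\}\cap Z$ and the general fibre $\XX_t$ is the $\mumu_I$-quotient of a general hyperplane section, which is a smooth surface since the $\mumu_I$-action there is free away from the vertex and the vertex is avoided. The type $[n;[r_1],[r_2],[r_3]]$ that appears is read off from the local indices $I_P$ of the fixed points of the action on $S$ exactly as in diagram~\eqref{equation-diagram-resolution}: the three fixed points produce the three branches $[r_i]$, and the integer $n$ is computed from $-\K^2$ via case \type{(nDV)} of Proposition~\ref{Proposition-computation-K2}, i.e. $n=9-\sum r_i-\K^2$; since $-\K^2_{(X^\sharp)}=I\tilde C^2$ and $\tilde C^2=-(n-1)$ by Corollary~\ref{canonical-cover-Z2}, matching $d=K_S^2$ pins down $n$.

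The correspondence is then: the degree $6$ surface of Example~\ref{example-deg=6} with the order $6$ element $\upalpha$ gives $[2;[2,3,6]]$ (row~\ref{smoothing-index=6}); with $\upalpha^2$ it gives $[3;[3,3,3]]$ and with $\upalpha^3$ it gives $[5;[2,2,2,2]]$ (rows~\ref{smoothing-index=3-deg=6},~\ref{smoothing-index=2-deg=6}); the del Pezzo surface of degree $5$ of Example~\ref{example-deg=5} with the order $5$ element gives $[3;[2,4,4]]$ (row~\ref{smoothing-index=5}, using that the quotient singularities are $\frac15(1,-1,1)$ and $\frac15(2,-3,1)=\langle 5,3\rangle$, whose chains are $[2,4,4]$ and the trivial one on the third branch); $\PP^2$ with weights $(0,1,2)$ (Example~\ref{example-P2}) gives $[4;[3,3,3]]$ (row~\ref{smoothing-index=3-deg=9}); and $\PP^1\times\PP^1$ with the swap-and-twist of order $4$ (Example~\ref{example-P1P1}) with $\upalpha^2$ gives $[6;[2,2,2,2]]$ (row~\ref{smoothing-index=2-deg=8}). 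In each case I verify that the central fibre $X$ is exactly the asserted log canonical type by checking the three local indices against the chain data, and I verify $\QQ$-Gorenstein-ness of the total family by noting that $\XX$ is a cyclic quotient of a Gorenstein (even Cohen--Macaulay hypersurface or complete intersection) variety by a group acting freely in codimension one, so $K_\XX$ is $\QQ$-Cartier; the central fibre $X=\XX_0$ is a Cartier divisor since $\psi$ was an honest function on $\XX$.

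The remaining point, and the main technical obstacle, is to confirm that the general member $\XX_t$ is genuinely \emph{smooth}, not merely canonical: the quotient of a general hyperplane section of the cone $Z$ by $\mumu_I$ is smooth away from the image of $z$ provided the action on that section is free outside $z$, but one must check that a general hyperplane section misses all the special points $\tilde P$ of $\tilde Z$ lying over the fixed points of $S$ — equivalently, that a general anticanonical curve on $S$ avoids the finitely many $\mumu_I$-fixed points $P_i$. Since $|-K_S|$ is very ample for $d\ge3$ (all our cases have $d\in\{5,6,8,9\}$) and there are only finitely many fixed points, a general anticanonical divisor does avoid them, so $\XX_t$ is smooth. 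A secondary check is that $\upmu_P\ge 0$ is consistent with Corollary~\ref{Corollary-mu-2}: in case \type{(nDV)} one needs $13-n-\sum r_i\ge 0$, and one verifies $n+\sum r_i\le 13$ holds for all six types listed (e.g. $[2;[2,3,6]]$: $2+11=13$; $[4;[3,3,3]]$: $4+9=13$; the \type{(DV)}, $I=2$ cases $[5;[2]^4]$, $[6;[2]^4]$ give $\upmu_P=4-\sum(n_i-3)=4-(n-2)=2,1\ge0$). For the two index-$2$ cases $n=5,6$ one can alternatively just invoke Example~\ref{example-2} and the preceding Proposition of \cite{deJong-vanStraten-1992}, or Stevens' result on index-$2$ multiplicity-$4$ singularities extended by the explicit degeneration of the cone over the degree $6$ del Pezzo; either way the existence is reduced to constructions already recorded. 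Assembling these verifications case by case completes the proof.
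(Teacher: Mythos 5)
Your overall strategy is the one the paper actually follows — embed $X$ as a Cartier divisor in the canonical threefold $\XX=Z/G$ of Sect.~\ref{sect-Indices} via a $G$-invariant anticanonical curve and smooth by the defining equation — but two of your steps fail as written. The most concrete error is the assignment of $[3;[2],[4],[4]]$ to the degree-$5$ del Pezzo surface with the order-$5$ automorphism (row~\ref{smoothing-index=5}). That singularity has index $4$, whereas your construction would exhibit it as a quotient of a simple elliptic singularity by $\mumu_5$; this is impossible, since a strictly log canonical surface singularity has index $1,2,3,4$ or $6$ (Theorem~\ref{theorem-classification-lc-singularities}) — equivalently, no order-$5$ automorphism of an elliptic curve fixes a point, so no smooth $\mumu_5$-invariant member of $|-K_S|$ passes through the two fixed points. (Numerically, the chain of $\frac15(2,-3)$ is not $[2,4,4]$ either: $[2,4,4]=\langle 26,15\rangle$.) The correct source is Example~\ref{example-P1P1} with the full group $\mumu_4$ (row~\ref{smoothing-index=4}), where $K_S^2=8=I(n-1)$ gives $n=3$; this is the case the paper's table lists and yours omits.

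The second problem is that your treatment of the general fibre is inverted. The curve $C=\{\psi=0\}\cap S$ must \emph{pass through} every point with non-trivial stabilizer — this is the hypothesis of Lemma~\ref{lemma-index} and is what produces the three branches $[r_i]$ of the central fibre — so $\psi$ is not a general anticanonical form, and all fibres $\XX_t$ come from this \emph{same} $\psi$; you cannot argue smoothness of $\XX_t$ by letting ``a general anticanonical curve avoid the fixed points,'' which would destroy the central fibre. What actually makes $\XX_t$ smooth is that $\{\psi=t\}$ misses the vertex $z$ for $t\neq0$, is smooth there for general $t$ by generic smoothness, and the twisted action $\varsigma'$ is free on $Z\setminus\{z\}$ (Claim~\ref{claim-canonical-terminal}). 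Relatedly, you take for granted both the existence of an invariant $\psi$ whose zero curve on $S$ is a \emph{smooth} elliptic curve through all the fixed points (a linear system with imposed base points need not have smooth general member there — the paper exhibits explicit equations) and the Cartier property of $X$ at $o$, which requires the local computation of Lemma~\ref{lemma-index} and the choice between $\lambda$ and $\lambda^{-1}$. Finally, your proposed fallback via \cite{deJong-vanStraten-1992} and Stevens covers $[5;[2]^4]$ but not $[6;[2]^4]$, so the cone construction is genuinely needed for $n=6$.
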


\begin{slemma}
\label{lemma-index}
In the notation of \eqref{equation-diagram-square}, 
let $C\subset S$ be a smooth elliptic $G$-invariant curve such that 
$C\sim -K_S$. Assume that $C$ 
passes through all the points with non-trivial stabilizers. 
Let $\tilde X^\sharp:= \gamma^{-1}(C)$, $X^\sharp:= \delta(\tilde X^\sharp)$,
and $X:=\pi(X^\sharp)$. 
Then the singularity $(X\ni o)$ is log canonical of index $|G|$. Moreover, 
replacing $\lambda$ with $\lambda^{-1}$ if necessary we may assume that
$X$ is a Cartier divisor on $\XX$.
\end{slemma}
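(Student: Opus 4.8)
The plan is to build the singularity $(X\ni o)$ by restricting the whole diagram \eqref{equation-diagram-square} to the elliptic curve $C$ and its preimages, and then verify the two claims (index $|G|$ and $X$ being Cartier on $\XX$) essentially by the same local-coordinate computations already used in Claim~\ref{claim-canonical-terminal} and Proposition~\ref{proposition-main-construction}. First I would observe that $\tilde X^\sharp=\gamma^{-1}(C)\subset\tilde Z$ is a smooth surface: since $\gamma:\tilde Z\to S$ is an $\mathbb A^1$-bundle and $C\subset S$ is a smooth curve, $\tilde X^\sharp$ is the total space of the restricted line bundle $\LLL|_C=\OOO_C(K_S)|_C$. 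By adjunction $\OOO_C(K_S+C)=\OOO_C(K_C)$, and $C\sim -K_S$ gives $\OOO_C(K_S)|_C\simeq\OOO_C(K_C)=\OOO_C$ up to the normal bundle contribution; more precisely $\deg(\LLL|_C)=K_S\cdot C=-K_S^2<0$, so $\tilde X^\sharp$ contains the negative section $\tilde C:=\tilde S\cap\tilde X^\sharp$ which is the curve $C$ itself with $\tilde C^2=K_S\cdot C=-K_S^2$. Thus $X^\sharp=\delta(\tilde X^\sharp)$ is obtained by contracting $\tilde C$, an elliptic curve of negative self-intersection, so $(X^\sharp\ni z)$ is a simple elliptic singularity of type $\mathrm{Ell}_{K_S^2}$; in particular it is Gorenstein log canonical of index $1$, and $\delta|_{\tilde X^\sharp}$ is its minimal resolution (which, by Lemma~\ref{lemma-crepant} restricted to $\tilde X^\sharp$, is crepant).

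Next I would pass to the quotient. Because $C$ is $G$-invariant and passes through every point with non-trivial stabilizer, the action of $G$ (both $\varsigma$ and the twisted $\varsigma'$) preserves $\tilde X^\sharp$, $X^\sharp$, and the fixed-point structure is unchanged: the stabilizer of $\tilde P\in\tilde X^\sharp$ is the same $G_P$ as before. Setting $X:=\pi(X^\sharp)=X^\sharp/G$ and $\tilde X:=\tilde X^\sharp/\varsigma'(G)$, the quotient map $\tilde X\to\tilde X^\sharp$ (resp. $\tilde X\to X$) fits inside \eqref{equation-diagram-square}. Restricting \eqref{equation-terminal-action} to the surface $\tilde X^\sharp$ — which, by the choice of compatible coordinates, is cut out near $\tilde P$ by $x_1$ or $x_2$ (a line through $\tilde P$ tangent to $\LLL_P$), say $\tilde X^\sharp=\{x_1=0\}$ locally — the induced action on $\tilde X^\sharp$ near $\tilde P$ is $(x_2,x_3)\mapsto(\zeta_{I_P}^{-b_P}x_2,\zeta_{I_P}^{a_P}x_3)$ with $\gcd(b_P,I_P)=\gcd(a_P,I_P)=1$. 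Hence $\tilde X$ has a cyclic quotient singularity of type $\tfrac1{I_P}(-b_P,a_P)$ at the image of $\tilde P$; since $\gcd(-b_P,I_P)=1$ this is a Du Val singularity of type $\mathrm A_{I_P-1}$ precisely when $a_P\equiv b_P\bmod I_P$, and in general it is log terminal, so $(\tilde X\to X)$ exhibits $X$ as a log canonical singularity (it is a $\mumu_I$-quotient of the simple elliptic $X^\sharp$, free in codimension one). The index is $\operatorname{lcm}_P(I_P/\gcd(I_P,\text{stuff}))$; as in Proposition~\ref{proposition-main-construction}, the holomorphic Lefschetz fixed-point formula forces $G=G_P$ for some $P$ lying on $C$, and the twist by $\lambda$ (acting non-trivially on $\LLL_P$) guarantees $K_X$ fails to be Cartier to the full order $|G|$, so the index of $(X\ni o)$ equals $|G|$.

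For the last assertion — that $X$ is a Cartier divisor on $\XX$ after possibly replacing $\lambda$ by $\lambda^{-1}$ — I would argue as follows. On $\tilde Z$, the surface $\tilde X^\sharp=\gamma^{-1}(C)$ is $\gamma^*C=\gamma^*(-K_S)$, a Cartier divisor; descending to $\tilde\XX=\tilde Z/\varsigma'(G)$ one gets $\tilde X=\varphi^{*}X$ up to components of $E$, and the question is whether the local defining function of $\tilde X^\sharp$ at $\tilde P$ can be chosen $G$-semi-invariant of the \emph{trivial} character, rather than merely semi-invariant. The defining function is a coordinate $x_1$ (or $x_2$) with $\varsigma$-weight $\pm b_P$; but under the twisted action $\varsigma'$ the fiber coordinate $x_3$ has weight $a_P$ which differs from the $\varsigma$-weight by the $\lambda$-contribution (the embedding $G=\mumu_I\hookrightarrow\CC^*$). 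Since $\delta^*X^\sharp\sim\gamma^*(-K_S)$ and $-K_S$ generates the relevant part of $\Pic^G$, the class of $X$ in the local class group $\CC l(\XX,o)\cong\ZZ/I$ is $\pm 1$ — the sign controlled by whether we use $\lambda$ or $\lambda^{-1}$ — hence $I$ times a generator, i.e. $X$ becomes Cartier for one of the two choices. The main obstacle I anticipate is precisely this bookkeeping of characters: one must track the $\mumu_I$-weight of the equation of $X^\sharp$ through the index-one cover and compare it, point by point, with the weight $a_P$ appearing in \eqref{equation-terminal-action}, and check that flipping $\lambda\to\lambda^{-1}$ simultaneously fixes this at \emph{all} fixed points (not just at the one with $G_P=G$); this follows because the $I_P$-weights at the other fixed points are restrictions of the same global character, but it needs to be said carefully.
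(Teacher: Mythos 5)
Your first two paragraphs are essentially the paper's picture: $\tilde X^\sharp=\Tot(\LLL|_C)$, so $(X^\sharp\ni o^\sharp)$ is simple elliptic of type $\mathrm{Ell}_{K_S^2}$, and $(X\ni o)$ is its quotient by a $\mumu_I$-action free in codimension one, whence log canonicity; the index statement needs the faithfulness of the $G$-action on $H^0(C,\OOO_C(K_C))$, which you only gesture at, but this is a minor point.

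The genuine gap is in the Cartier claim, which is the entire content of the ``$\lambda$ versus $\lambda^{-1}$'' clause, and your argument for it does not work. First, the object whose local equation must be $G_P$-invariant is not $\tilde X^\sharp$ but $\tilde X^\sharp+\tilde S$: in the coordinates of \eqref{equation-terminal-action} with $x_1$ along $C$, the equation of $\tilde X^\sharp$ is $x_2$, of weight $-b_P\neq 0$, so it can never be made invariant; what descends to a Cartier divisor on $\tilde\XX$ is $\{x_2x_3=0\}=\tilde X^\sharp+\tilde S$, and since $\varphi$ contracts $E=\tilde S/G$ and $\tilde X+E$ is $\varphi$-trivial, one gets $X=\varphi_*(\tilde X+E)$ Cartier. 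Your substitute argument via ``$\Cl(\XX,o)\cong\ZZ/I$'' is both unjustified (the class group of the cone $Z$ at its vertex is $\Cl(S)/\ZZ[-K_S]$, which for $K_S^2\le 8$ is far from cyclic of order $I$) and internally inconsistent: a class equal to $\pm 1$ in $\ZZ/I$ is a generator, not ``$I$ times a generator'', so as written it would prove $X$ is \emph{not} Cartier. Second, you correctly flag the simultaneity issue at the different fixed points but do not resolve it. The missing idea, which is the heart of the paper's proof, is to normalize the coordinate $x_1$ at \emph{every} fixed point by a single global nowhere-vanishing $1$-form $\omega$ on $C$, on which $G$ acts by one fixed primitive character $\zeta_I$. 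This makes the tangent weight along $C$ equal to $+1$ and the transverse weight equal to $-1$ at every $\tilde P$ simultaneously, so the action is $\frac1{I_P}(1,-1,a_P)$; since $G$ acts faithfully on the elliptic curve $C$ with a fixed point, $I_P\in\{2,3,4,6\}$, and $\gcd(a_P,I_P)=1$ then forces $a_P\equiv\pm1$ with a sign governed by the same global character at all points, so the single replacement $\lambda\mapsto\lambda^{-1}$ sets $a_P=1$ everywhere and makes $x_2x_3$ invariant. Without this uniform normalization your ``needs to be said carefully'' remains an unproved assertion at the exact point where the lemma could fail.
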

\begin{proof}
Put $\tilde X:= \tilde X^\sharp/G$. Since the divisor $\tilde X^\sharp+\tilde S$ 
is trivial on $\tilde S$, the contraction $\delta$ is log crepant with respect 
to $K_{\tilde Z}+\tilde X^\sharp+\tilde S$ and so $\varphi$ is with respect to 
$K_{\tilde \XX}+\tilde X+E$. By construction $X^\sharp$ is a cone over the 
elliptic curve $C$ and $X=X^\sharp/G$. Therefore, $(X\ni o)$ is a log canonical 
singularity. Comparing with~\ref{construction-index-one-cove-log-canonical} we 
see that the index of $(X\ni o)$ equals $|G|$. We claim that $\tilde X+E$ is a 
Cartier divisor on $\tilde \XX$. Identify $C$ with $\tilde C:=\gamma^{-1}(C)\cap \tilde 
S=\tilde S\cap \tilde X^\sharp$.

Let $\omega\in H^0(C,\OOO_C(K_C))$ be a nowhere vanishing holomorphic $1$-form on $C$
and let $\upalpha$ be a generator of $G$. Since $\dim H^0(C,\OOO_C(K_C))=1$ and $G$ 
has a fixed point on $C$, the action of $G$ on  $H^0(C,\OOO_C(K_C))$ is faithful and
we can write $\upalpha^*\omega=\zeta_I \omega$, where $\zeta_I$ is a suitable  primitive 
$I$-th root of unity.  

Pick a point $\tilde P\in \tilde Z$ with 
non-trivial stabilizer $G_P$ of order $I_P$. By our assumptions $\tilde P\in \tilde C$. Take 
semi-invariant local coordinates $x_1,x_2,x_3$ as in \eqref{equation-terminal-action}. 
Moreover, we can take them so that $x_1$ is a local coordinate along $C$. 
Then we can write $\omega=\varpi d x_1$, where $\varpi$ is an invertible 
holomorphic function in a neighborhood of $P$. Hence,  $\varpi$ is an
invariant and $\upalpha^*x_1=\zeta_I^{I/I_P} x_1$.
Thus, by 
\eqref{equation-terminal-action}, the action near $\tilde P$ has the form $\frac 
1{I_P}(1,-1,a_P)$.  
Since $G$ faithfully acts on $C$ with a fixed point, $I_P=2$, $3$, $4$ or $6$. 
Since $\gcd(a_P,I_P)=1$, we 
have $a_P\in \{\pm 1\}$. 
Then by \eqref{equation-index-character} replacing $\lambda$ with $\lambda^{-1}$ 
we may assume that $a_P=1$. In our coordinates the local equation of $\tilde 
S$ is $x_3=0$ and the local equation of $\tilde X^\sharp$ is $x_2=0$. Now it is 
easy to see that the local equation $x_2x_3=0$ of $\tilde S+\tilde X^\sharp$ is 
$G_P$-invariant. Therefore, $\tilde X+E$ is Cartier. Since it is 
$\varphi$-trivial, the divisor $X=\varphi_*(\tilde X+E)$ on $\XX$ is Cartier as 
well.
\end{proof}

\begin{proof}[Proof of Theorem \textup{\ref{theorem-Q-smmoothings-6}}]
It is sufficient to embed $X$ to a canonical threefold singularity $(\XX\ni o)$ 
as a Cartier divisor. Let $(X^\sharp \ni o^\sharp)\to (X\ni o)$ be the index one 
cover. Then $(X^\sharp \ni o^\sharp)$ is a simple elliptic singularity (see 
\ref{index-one-cover}). In the notation of Examples~\ref{examples-index} 
consider the following $\mumu_I$-invariant elliptic curve $C\subset S$:
\par\medskip\noindent
\scalebox{1}{
\setlength{\extrarowheight}{6pt}
\begin{tabularx}{\textwidth}{@{}ll@{}}
\small{\ref{smoothing-index=6}\ref{smoothing-index=3-deg=6}}& 
$\zeta_3(u_0w_1-u_1w_0)(v_0+v_1)+ (u_0v_1-u_1v_0)(w_0+w_1)$
\\
\small{\ref{smoothing-index=4}}& 
$(u_1^2-u_0^2)v_0v_1+\zeta_4u_0u_1(v_1^2-v_0^2)$
\\
\small{\ref{smoothing-index=3-deg=9}}& 
$u_0^2u_1+ u_1^2u_2+ u_2^2 u_0$
\\
\small{\ref{smoothing-index=2-deg=6}}&
$c_1(u_0w_1-u_1w_0)( v_0 +v_1)+ c_2(u_0 v_1-u_1 v_0) (w_0+ w_1)$
\\
\small{\ref{smoothing-index=2-deg=8}}&
$c_1( u_0^2 v_0^2{-}u_1^2 v_1^2)+c_2v_0 v_1( u_0^2 {-}u_1^2 )+ c_3 (u_0^2 v_1^2{-}u_1^2 v_0^2)+c_5u_0 u_1 (v_0^2{-} v_1^2)$ 
\end{tabularx}
}

\par\medskip\noindent
where $c_i$'s are constants and $\zeta_n$ is a primitive $n$-th root of unity.
Then we apply Lemma~\ref{lemma-index}.
\end{proof}

\section{Noether's formula}
\label{sect-Noether}
\begin{proposition}[{\cite{Hacking-Prokhorov-2010}}]
Let $X$ be a projective rational surface with only rational singularities. 
Assume that every singularity of $X$ admits a $\QQ$-Gorenstein smoothing. Then
\begin{equation}
\label{equation-Noether-formula}
K_X^2+\uprho(X)+\sum_{P\in X}\upmu_P=10.
\end{equation}
\end{proposition}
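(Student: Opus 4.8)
The plan is to run the Noether formula on a suitable resolution and account carefully for each correction term, using the earlier local computation of the Milnor number in Proposition~\ref{proposition-computation-muP}.

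First I would take a $\QQ$-Gorenstein smoothing $\mathfrak Z/(\mathfrak T\ni 0)$ of $X$; such a global smoothing exists because the singularities are unobstructed (this is the content of Proposition~\ref{no-obstructions}, referenced in the introduction), so the local smoothings glue. Let $Z'$ denote the general fiber, which is a smooth projective surface. Next I would pass to the minimal resolution $\eta\colon Y\to X$ and write $K_Y=\eta^*K_X-\Delta$, so that $K_Y^2=K_X^2+\Delta^2=K_X^2+\sum_P\K_{(X,P)}^2$. Since $X$ has only rational singularities, $\chi(\OOO_Y)=\chi(\OOO_X)$, and since $X$ is a rational surface, $\chi(\OOO_X)=1=\chi(\OOO_{Z'})$. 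For the topological Euler numbers, resolving each singular point $P$ replaces it by $\varsigma_P$ exceptional curves (the number of exceptional divisors on the minimal resolution), so $\Eu(Y)=\Eu(X)+\sum_P\varsigma_P$.

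Then I would apply Noether's formula $K_Y^2+\Eu(Y)=12\chi(\OOO_Y)$ on $Y$ and the analogous identity $K_{Z'}^2+\Eu(Z')=12\chi(\OOO_{Z'})$ on the general fiber. Subtracting and using $K_Y^2=K_X^2+\sum_P\K_{(X,P)}^2$, $\chi(\OOO_Y)=\chi(\OOO_{Z'})$, one gets
\begin{equation*}
K_X^2+\sum_P\K_{(X,P)}^2+\Eu(X)+\sum_P\varsigma_P = K_{Z'}^2+\Eu(Z').
\end{equation*}
The key input is Proposition~\ref{proposition-computation-muP}, which gives $\K_{(X,P)}^2+\varsigma_P=\upmu_P$ for each $P$, so the left side becomes $K_X^2+\sum_P\upmu_P+\Eu(X)$. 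It remains to identify the right side. Since $K_X$ is $\QQ$-Cartier and the family is $\QQ$-Gorenstein, $K^2$ is deformation-invariant, so $K_{Z'}^2=K_X^2$; and $\Eu(Z')=K_{Z'}^2+12\chi(\OOO_{Z'})-2K_{Z'}^2$... more simply, $\Eu(Z')=12\chi(\OOO_{Z'})-K_{Z'}^2=12-K_X^2$. Substituting, $K_X^2+\sum_P\upmu_P+\Eu(X)=12-K_X^2$...

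Actually the cleaner bookkeeping is: $\Eu(X)=\Eu(Z')$ would be false, so I would instead argue $\Eu(X)-\Eu(Z')=-\sum_P\upmu_P$ by comparing Milnor fibers (each local smoothing contributes $\upmu_P$ extra to the second Betti number, via \eqref{equation-Milnor-fiber-muP}), then plug $\Eu(X)=K_X^2+12\chi(\OOO_X)-K_X^2$... The point I would ultimately land on: Noether on $Z'$ gives $\uprho(Z')=10-K_{Z'}^2$ since $Z'$ is a smooth rational surface with $\uprho=10-K^2$, and then tracking $\uprho$ through the resolution and the degeneration (using that $\uprho(Y)=\uprho(X)+\sum\varsigma_P$ and $\uprho(Y)=\uprho(Z')+\sum(\varsigma_P-\upmu_P)$... ) yields \eqref{equation-Noether-formula}.

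\textbf{The main obstacle} is organizing the Euler-characteristic/Betti-number bookkeeping without circularity: one must be careful that $Z'$ is rational (so that $\chi(\OOO_{Z'})=1$ and the smooth Noether formula reads $K_{Z'}^2+\Eu(Z')=12$), that $K^2$ is preserved in the $\QQ$-Gorenstein family, and that the contributions $\varsigma_P$ (from resolving) and $\upmu_P$ (from smoothing) enter with the correct signs. All of this is essentially assembled from \eqref{equation-Milnor-fiber-muP}, Proposition~\ref{proposition-computation-muP}, and the deformation-invariance of $K^2$; no new geometric idea is needed beyond Noether's formula applied twice.
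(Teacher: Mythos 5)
Your core approach---Noether's formula on the minimal resolution $\eta:Y\to X$ combined with the local identity $\upmu_P=\K_{(X,P)}^2+\varsigma_P$ of Proposition~\ref{proposition-computation-muP}---is exactly the paper's, and the paper's proof stops essentially where your first two paragraphs do: from
$12\chi(\OOO_X)=K_Y^2+\Eu(Y)=K_X^2+\Eu(X)+\sum_P(\K_{(X,P)}^2+\varsigma_P)$
one concludes at once using $\chi(\OOO_X)=1$ and $\Eu(X)=2+\uprho(X)$ (valid because $X$ is a rational surface with rational singularities, so $b_1(X)=b_3(X)=0$ and $b_2(X)=\uprho(X)$). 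Your detour through the general fiber $Z'$ of a global smoothing is where the write-up goes astray: the substitution ``$K_X^2+\sum_P\upmu_P+\Eu(X)=12-K_X^2$'' drops the term $K_{Z'}^2$ from the right-hand side of the identity you had just derived (the right side is $K_{Z'}^2+\Eu(Z')=12\chi(\OOO_{Z'})=12$, full stop), and invoking $Z'$ obliges you to justify the rationality of the general fiber and the deformation-invariance of $K^2$, neither of which is needed. The chain you finally land on ($\uprho(Y)=\uprho(X)+\sum\varsigma_P$ together with $\uprho(Y)=10-K_Y^2$ for the smooth rational surface $Y$) does close the argument correctly and without $Z'$; it is the paper's computation rewritten in terms of $\uprho(Y)$ instead of $\Eu(Y)$.
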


\begin{proof}
Let $\eta:Y\to X$ be the minimal resolution.
Since $X$ has only rational singularities, we have
\begin{equation*}
\Eu(Y)=\Eu(X)+\sum _P\varsigma_P,\qquad 
\chi(\OOO_{Y})=\chi(\OOO_X).
\end{equation*}
Further, we can write 
\begin{equation*}
K_{Y}=\eta^* K_X-\sum_P\Delta_P,\qquad 
K_{Y}^2=K_X^2+\sum_P\Delta_P^2. 
\end{equation*}
By the usual Noether formula for smooth surfaces
\begin{equation*}
\label{equation-Noether-formula-general-proof}
12\chi(\OOO_X)=K_{Y}^2+\Eu(Y)=
K_X^2+\Eu(X)+\sum_P (\Delta_P^2+\varsigma_P).
\end{equation*}
Now the assertion follows from \eqref{equation-computation-muP}.
\end{proof}

\begin{case}
Let $X$ be an arbitrary normal projective surface, 
let $\eta:Y\to X$ be the minimal resolution, and let $D$ be a Weil divisor on $X$.
Write $\eta^*D=D_Y+D^\bullet$, where $D_Y$ is the proper transform of $D$ 
and $D^{\bullet}$ is the exceptional part of $\eta^*D$.
Define the following number
\begin{equation}
\label{equation-def-cP}
c_X(D)=-\textstyle{\frac12}\langle D^{\bullet}\rangle\cdot (\lfloor\eta^*D\rfloor-K_{Y }).
\end{equation}
\end{case}

\begin{sproposition}[{\cite[\S 1]{Blache1995}}]
\label{proposition-RR}
In the above notation we have
\begin{equation}
\label{equation-proposition-RR}
\chi(\OOO_X(D))=\textstyle{\frac12} D\cdot (D-K_X)+\chi(\OOO_X)+c_X(D)+c'_X(D),
\end{equation}
where 
\begin{equation*}
c'_X(D):=
h^0(R^1\eta_*\OOO_{Y}(\lfloor\eta^*D\rfloor))
-h^0(R^1\eta_*\OOO_{Y}).
\end{equation*}
\end{sproposition}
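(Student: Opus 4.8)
The plan is to deduce the formula \eqref{equation-proposition-RR} from the usual Riemann--Roch theorem on the smooth surface $Y$ applied to the sheaf $\OOO_{Y}(\lfloor\eta^*D\rfloor)$, and then to push everything down to $X$. First I would set $D' := \lfloor\eta^*D\rfloor = D_Y + \lfloor D^\bullet\rfloor$, which is an honest integral divisor on $Y$, and write Riemann--Roch there:
\[
\chi(\OOO_{Y}(D')) = \textstyle{\frac12} D'\cdot(D' - K_{Y}) + \chi(\OOO_{Y}).
\]
Since $\eta$ has connected fibres and $X$ is normal we have $\eta_*\OOO_{Y} = \OOO_X$ and $\chi(\OOO_{Y}) = \chi(\OOO_X)$. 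The Leray spectral sequence for $\eta$ and the sheaf $\OOO_{Y}(D')$ gives
\[
\chi(\OOO_{Y}(D')) = \chi(\eta_*\OOO_{Y}(D')) - h^0(R^1\eta_*\OOO_{Y}(D')),
\]
using that $\eta$ has fibre dimension $\le 1$ so $R^{\ge 2}\eta_* = 0$, and that $R^1\eta_*\OOO_{Y}(D')$ is supported on the finitely many singular points, hence has vanishing higher cohomology. Moreover $\eta_*\OOO_{Y}(\lfloor\eta^*D\rfloor) = \OOO_X(D)$ because $\lfloor D^\bullet\rfloor$ is exceptional and effective-minus-something supported on the fibres — more precisely, by the projection formula and the fact that rounding down an exceptional $\QQ$-divisor does not change the pushforward of the corresponding sheaf on a normal surface. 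Combining, $\chi(\OOO_X(D)) = \textstyle{\frac12} D'\cdot(D'-K_{Y}) + \chi(\OOO_X) + h^0(R^1\eta_*\OOO_{Y}(D'))$.

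The remaining task is purely the intersection-theoretic identity
\[
\textstyle{\frac12} D'\cdot(D' - K_{Y}) = \textstyle{\frac12} D\cdot(D-K_X) + c_X(D) + h^0(R^1\eta_*\OOO_{Y}),
\]
wait — rather, after subtracting the $h^0(R^1\eta_*\OOO_{Y})$ term to form $c'_X(D)$, the identity to be checked is
\[
\textstyle{\frac12} D'\cdot(D' - K_{Y}) = \textstyle{\frac12} D\cdot(D-K_X) + c_X(D).
\]
To verify it I would write $\eta^*D = D' + \langle D^\bullet\rangle$ where $\langle\,\rangle$ denotes fractional part, so $D' = \eta^*D - \langle D^\bullet\rangle$. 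Using the projection formula $\eta^*D\cdot(\text{exceptional}) = 0$, $\eta^*D\cdot\eta^*D = D^2$, and $K_{Y} = \eta^*K_X + (\text{effective exceptional})$ together with $\eta^*D\cdot\eta^*K_X = D\cdot K_X$, one expands $D'\cdot(D'-K_{Y})$ and finds that all the terms involving $\eta^*D$ collapse to $D\cdot(D-K_X)$, while the purely exceptional remainder is
\[
\langle D^\bullet\rangle\cdot K_{Y} - \langle D^\bullet\rangle\cdot D' = -\langle D^\bullet\rangle\cdot(D' - K_{Y}) = -\langle D^\bullet\rangle\cdot(\lfloor\eta^*D\rfloor - K_{Y}),
\]
which upon multiplication by $\frac12$ is exactly $c_X(D)$ as defined in \eqref{equation-def-cP}. (Here one uses $\langle D^\bullet\rangle\cdot\eta^*(\text{anything}) = 0$ since $\langle D^\bullet\rangle$ is exceptional.)

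I expect the main obstacle to be the bookkeeping in the last step — keeping careful track of which divisor the rounding is applied to and making sure the cross-terms between $D_Y$ and the exceptional part are handled correctly, since $D_Y$ need not be disjoint from the exceptional locus, and $\langle D^\bullet\rangle \cdot D_Y$ does not vanish in general. The cleanest route is to never split $D'$ into proper transform plus exceptional part, but instead always work with $\eta^*D$ and $\langle D^\bullet\rangle$, exploiting that $\langle D^\bullet\rangle$ is orthogonal to every pullback class; then the computation is a two-line expansion. The identification $\eta_*\OOO_{Y}(\lfloor\eta^*D\rfloor) = \OOO_X(D)$ is standard (it is the definition of the divisorial sheaf via reflexive pushforward), but it is worth stating explicitly since it is the one place where normality of $X$ and the reflexivity of $\OOO_X(D)$ are used.
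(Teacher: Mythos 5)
The paper offers no proof of this proposition; it is quoted from \cite[\S 1]{Blache1995}. Your strategy --- Riemann--Roch on $Y$ for $\lfloor\eta^*D\rfloor$, the Leray spectral sequence to descend to $X$, the identification $\eta_*\OOO_{Y}(\lfloor\eta^*D\rfloor)=\OOO_X(D)$, and the final intersection-theoretic identity exploiting that $\langle D^\bullet\rangle$ and $K_{Y}-\eta^*K_X$ are orthogonal to pullback classes --- is the standard derivation, and the intersection computation at the end is carried out correctly.

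There is, however, one step that fails as written: the equality $\chi(\OOO_{Y})=\chi(\OOO_X)$ is false in general; it holds only when the singularities of $X$ are rational. The same Leray argument you apply to $\OOO_{Y}(\lfloor\eta^*D\rfloor)$, applied instead to $\OOO_{Y}$ itself, gives
\begin{equation*}
\chi(\OOO_{Y})=\chi(\OOO_X)-h^0(R^1\eta_*\OOO_{Y}),
\end{equation*}
and this is precisely the origin of the second term in $c'_X(D)$. Consequently your ``Combining'' line is missing $-h^0(R^1\eta_*\OOO_{Y})$, and the subsequent move ``after subtracting the $h^0(R^1\eta_*\OOO_{Y})$ term to form $c'_X(D)$'' introduces that term without justification --- it reads as reverse-engineered from the target formula rather than derived. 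The point is not cosmetic here: the proposition is stated for an arbitrary normal projective surface and is used in a context where strictly log canonical, hence possibly non-rational (simple elliptic or cusp), singularities occur. Replacing the false equality by the displayed Leray computation closes the gap, after which your argument is complete. Two minor remarks: the identification $\eta_*\OOO_{Y}(\lfloor\eta^*D\rfloor)=\OOO_X(D)$ is indeed standard (pull back $\operatorname{div}(f)+D\ge 0$ and round, using normality of $X$), though your justification is somewhat vague; and for the minimal resolution one has $K_{Y}=\eta^*K_X-\Delta$ with $\Delta$ effective, not $K_{Y}=\eta^*K_X+(\text{effective exceptional})$ --- harmless, since only the exceptionality of the difference enters the computation.
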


\begin{sremark}
Note that $c_X(D)$ can be computed locally:
\begin{equation*}
c_{X}(D)=\sum_{P\in X} c_{P,X}(D),
\end{equation*}
where $c_{P,X}(D)$ is defined by the formula \eqref{equation-def-cP}
for each germ $(X\ni P)$.
\end{sremark}

\begin{slemma}
Let $(X\ni P)$ be a rational log canonical surface singularity.
Then
\begin{equation*}
c_{P,X}(-K_X)=\Delta^2-\lceil\Delta\rceil^2-3.
\end{equation*}
where, as usual, $\Delta$ is defined by $K_Y=\eta^*K_X-\Delta$.
\end{slemma}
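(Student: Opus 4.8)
The claimed formula $c_{P,X}(-K_X)=\Delta^2-\lceil\Delta\rceil^2-3$ should follow by unwinding the definition \eqref{equation-def-cP} with $D=-K_X$, so the first step is to identify each ingredient of that formula in terms of $\Delta$. Writing $\eta^*(-K_X)=-K_Y+D^\bullet$ and comparing with \eqref{equation-codiscrepancy}, $K_Y=\eta^*K_X-\Delta$, we get immediately $D^\bullet=-\Delta$. Hence the fractional part is $\langle D^\bullet\rangle=\langle -\Delta\rangle=\lceil\Delta\rceil-\Delta$, and the round-down is $\lfloor\eta^*(-K_X)\rfloor=-K_Y+\lfloor -\Delta\rfloor=-K_Y-\lceil\Delta\rceil$. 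Substituting into \eqref{equation-def-cP} gives
\begin{equation*}
c_{P,X}(-K_X)=-\textstyle\frac12(\lceil\Delta\rceil-\Delta)\cdot(-K_Y-\lceil\Delta\rceil-K_Y)
=\textstyle\frac12(\lceil\Delta\rceil-\Delta)\cdot(2K_Y+\lceil\Delta\rceil).
\end{equation*}
So the whole problem reduces to evaluating this intersection number, which is purely a computation on the minimal resolution $Y$.

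**Carrying out the computation.** Expand the product into the four terms $\lceil\Delta\rceil\cdot K_Y$, $\lceil\Delta\rceil^2$, $-\Delta\cdot K_Y$, and $-\Delta\cdot\lceil\Delta\rceil$. The terms involving $K_Y$ are handled by adjunction on each exceptional curve: for a reduced exceptional component $E_i$ one has $K_Y\cdot E_i=-2-E_i^2$, and more relevantly $(K_Y+\lceil\Delta\rceil)\cdot E_i$ can be controlled because $\lceil\Delta\rceil$ is the reduced exceptional divisor $E=\sum E_i$ (the support of $\Delta$), so $(K_Y+E)\cdot E_i=2p_a(E_i)-2+(\text{number of other components meeting }E_i)$; since $(X\ni P)$ is rational, each $E_i$ is a smooth rational curve, which pins this down. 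The cleanest route is to use the defining property of $\Delta$, namely $(K_Y+\Delta)\cdot E_i=0$ for every $i$ (equivalently $K_Y\cdot E_i=-\Delta\cdot E_i$), together with $\lceil\Delta\rceil=E$. Then $\Delta\cdot K_Y=-\Delta\cdot\Delta=-\Delta^2$ and $\lceil\Delta\rceil\cdot K_Y=E\cdot K_Y$, while the adjunction/rationality bookkeeping should convert the remaining combination $E\cdot K_Y+E^2-2\,\Delta\cdot\lceil\Delta\rceil$ (or similar) into the constant $-6$, producing $c_{P,X}(-K_X)=\frac12(-2\Delta^2+\text{stuff})=\Delta^2-\lceil\Delta\rceil^2-3$ after collecting. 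It may be cleaner to invoke the well-known fact that for a rational singularity $\chi(\OOO_X)=\chi(\OOO_Y)$ and $h^0(R^1\eta_*\OOO_Y)=0$, then apply Proposition~\ref{proposition-RR} to $D=-K_X$ and to $D=0$ and subtract; but since $c'_X(-K_X)$ need not vanish for log canonical (non-log-terminal) singularities, the direct intersection-theoretic computation above is safer.

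**The main obstacle.** The routine part is the algebra; the genuine subtlety is that $\lceil\Delta\rceil$ equals the full reduced exceptional divisor $E$ only because $\Supp(\Delta)=\Supp(E)$ (stated in the Notation block), and one must be careful that for log canonical — as opposed to log terminal — singularities some coefficients of $\Delta$ equal exactly $1$, so $\lceil\Delta\rceil$ still picks up those components with coefficient $1$ and $\langle\Delta\rangle$ correctly has those components with coefficient $0$; no component is dropped or double-counted. The identity $(K_Y+\Delta)\cdot E_i=0$ holds for all $i$ regardless (it is the definition of the codiscrepancy $\Delta$ on the minimal resolution), so the computation goes through uniformly. I expect the only real work is checking that the constant that falls out is exactly $-3$, which amounts to the genus/adjunction identity $\sum_i(K_Y\cdot E_i+E_i^2)+2(\#\text{edges of }\Gamma)=-2\cdot\#\{E_i\}$ combined with $\#\{E_i\}-\#\text{edges}=1$ for a tree (true for rational singularities, whose dual graphs are trees) — giving the universal constant independent of the particular configuration.
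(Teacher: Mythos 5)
Your proposal is correct and follows essentially the same route as the paper: substitute $D^\bullet=-\Delta$ and $\lfloor\eta^*(-K_X)\rfloor=-K_Y-\lceil\Delta\rceil$ into the definition, reduce to $\frac12(\lceil\Delta\rceil-\Delta)\cdot(\lceil\Delta\rceil-2\Delta)$ using $(K_Y+\Delta)\cdot E_i=0$, and close with $\Delta\cdot\lceil\Delta\rceil=\lceil\Delta\rceil^2+2$, which is exactly the adjunction statement $p_a(\lceil\Delta\rceil)=0$ coming from rationality. The only blemish is that your final displayed identity is miswritten (its left side equals $(K_Y+\lceil\Delta\rceil)\cdot\lceil\Delta\rceil=2p_a(\lceil\Delta\rceil)-2$, not $-2\cdot\#\{E_i\}$), but the intended content --- the dual graph is a tree of smooth rational curves, so $p_a(\lceil\Delta\rceil)=0$ --- is precisely what the paper invokes.
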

\begin{proof} 
Put $D:=-K_X$ and write
\begin{equation*}
\eta^*D=-K_{Y}-\Delta,\qquad\langle D^{\bullet}\rangle=\langle-\Delta\rangle=\lceil\Delta\rceil-\Delta,
\end{equation*}
\begin{equation*}
\lfloor\eta^*D\rfloor-K_{Y }=
-2K_{Y}-\lceil\Delta\rceil=-2\eta^*K_X+2\Delta-\lceil\Delta\rceil.
\end{equation*}
Therefore,
\begin{equation*}
c_{P,X}(D)=\textstyle{\frac12}(\Delta-\lceil\Delta\rceil)\cdot (-2\eta^*K_X+2\Delta-\lceil\Delta\rceil)
=\textstyle{\frac12}(\lceil\Delta\rceil-\Delta)\cdot (\lceil\Delta\rceil-2\Delta).
\end{equation*}
Since $(X\ni P)$ be a rational singularity, we have
\begin{equation*}
-2=2p_a(\lceil\Delta\rceil)-2=(\lceil\Delta\rceil-\Delta)\cdot\lceil\Delta\rceil,\quad
\lceil\Delta\rceil^2+2=\Delta\cdot\lceil\Delta\rceil
\end{equation*}
and the equality follows.
\end{proof}

\begin{scorollary}
Let $(X\ni P)$ be a rational log canonical surface singularity
such that $\K^2$ is integral.
Then
\begin{equation}
\label{equation-cP}
c_{P,X}(-K_X)=
\begin{cases}
-1&\text{in the case $(\mathrm{DV})$,}
\\
\phantom{-}0&\text{if $(X\ni P)$ is log terminal} \\& \text{or in the case $(\mathrm{nDV})$.}
\end{cases}
\end{equation}
\end{scorollary}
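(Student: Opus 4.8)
The plan is to plug the two cases of Proposition~\ref{Proposition-computation-K2} into the formula of the preceding lemma, namely
\[
c_{P,X}(-K_X)=\Delta^2-\lceil\Delta\rceil^2-3=\K^2-\lceil\Delta\rceil^2-3 .
\]
So the whole computation reduces to determining $\lceil\Delta\rceil^2$ in each case, where $\Delta=C+\sum\Delta_i$ is the effective divisor with $K_Y=\eta^*K_X-\Delta$ and $C=\lfloor\Delta\rfloor$. The key observation is that $\lceil\Delta\rceil=C+\lceil\sum\Delta_i\rceil$, and the rounded-up divisor $\lceil\Delta_i\rceil$ is the reduced exceptional chain (or the reduced three-armed configuration) that is contracted by $\sigma$ in the dlt modification, so its self-intersection is easy to read off from the combinatorics already recorded in Theorem~\ref{theorem-classification-lc-singularities} and Corollary~\ref{proposition-classification-lc-singularities}.

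First I would dispose of the log terminal case directly: there $\Delta$ is supported on the full exceptional locus of the minimal resolution, $\lfloor\Delta\rfloor=0$, and one checks $c_{P,X}(-K_X)=0$ either by the same lemma's proof (using that the singularity is a $\rT$-singularity by Theorem~\ref{classification-T-singularities} when $\K^2\in\ZZ$) or by citing \cite{Blache1995} directly. Next, in the $(\mathrm{DV})$ case the divisor $\Delta$ has $C=\lfloor\Delta\rfloor\ne 0$ equal to the reduced chain $\tilde C$ pulled back, and the fractional parts $\Delta_i$ sit on the four (or suitably fewer) $\rA_1$-tails or on the Du~Val arms; since all the relevant $\Delta_i$ are of type $\tfrac1{r_i}(1,-1)$, we have $\lceil\Delta_i\rceil=E_i$ a single $(-2)$-curve meeting $C$ once, and a short intersection count gives $\lceil\Delta\rceil^2 = C^2 + (\text{contributions from the tails})$. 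Carrying this through with $\K^2=C^2+2=-(n-2)$ yields $\lceil\Delta\rceil^2=\K^2-2$, hence $c_{P,X}(-K_X)=-1$. Finally, in the $(\mathrm{nDV})$ case one uses the identity from the proof of Corollary~\ref{canonical-cover-Z2}, $\Delta=\tilde\eta^*\tilde C+\sum\tfrac{r_i-2}{r_i}E_i$, so $\lceil\Delta\rceil=\tilde\eta^*\tilde C+\sum E_i$ (since $0<\tfrac{r_i-2}{r_i}<1$), and a direct computation of $(\tilde\eta^*\tilde C+\sum E_i)^2$ using $\tilde C\cdot E_i=1$, $E_i^2=-r_i$ and $-\tilde C^2=n-1$ gives $\lceil\Delta\rceil^2=\K^2-3$, hence $c_{P,X}(-K_X)=0$.

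The main obstacle I anticipate is purely bookkeeping rather than conceptual: getting $\lceil\Delta\rceil^2$ exactly right in the $(\mathrm{DV})$ subcases of type $[n_1,\dots,n_s;[2]^4]$, where one must be careful about how many of the four $\rA_1$-tails survive as genuine fractional-part components (the edge cases $s=1$, or $n_1=2$ resp.\ $n_s=2$, where the chain $C$ already absorbs a tail), and about the cross terms $C\cdot\lceil\Delta_i\rceil$ versus $C\cdot\Delta_i$. I would organize this by writing $\lceil\Delta\rceil=\Delta+\sum(\lceil\Delta_i\rceil-\Delta_i)$ and expanding $\lceil\Delta\rceil^2$ using that each $\lceil\Delta_i\rceil-\Delta_i$ is supported on a connected chain meeting $C$ only at its far end, so the cross terms telescope exactly as in the computation of $\delta_i$ in the proof of Proposition~\ref{Proposition-computation-K2}; this makes the answer uniform in $s$ and $n$ and immune to the edge cases.
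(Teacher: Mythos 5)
Your proposal is correct and follows essentially the same route as the paper: apply the lemma $c_{P,X}(-K_X)=\Delta^2-\lceil\Delta\rceil^2-3$ and evaluate $\lceil\Delta\rceil^2$ of the reduced exceptional divisor case by case; your intermediate values $\lceil\Delta\rceil^2=\K^2-2$ in the $(\mathrm{DV})$ case and $\lceil\Delta\rceil^2=\K^2-3$ in the $(\mathrm{nDV})$ case are exactly what the paper uses. One small slip to fix when writing it up: in the $(\mathrm{nDV})$ case $\lceil\Delta\rceil$ is the reduced divisor $C_Y+\sum E_i$ (with $C_Y$ the proper transform of $\tilde C$, $C_Y^2=-n$), not $\tilde\eta^*\tilde C+\sum E_i$, whose square would be $-(n-1)-\sum r_i$ and would give the wrong answer; with the reduced divisor one indeed gets $\lceil\Delta\rceil^2=-n-\sum r_i+6=\K^2-3$ as you assert.
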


\begin{proof}
Let us consider the $(\mathrm{nDV})$ case (other cases are similar). By 
Proposition~\ref{Proposition-computation-K2} we have $-\Delta^2=n-9+\sum r_i$.
On the other hand, $\lceil\Delta\rceil^2=-n + 6 -\sum r_i$. Hence, 
$c_{P,X}(-K_X)=0$ as claimed.
\end{proof}

\begin{scorollary}
\label{lemma-RR}
Let $X$ be a del Pezzo surface with log canonical rational singularities and
$\uprho(X)=1$. Assume that 
for any singularity of $X$ the invariant $\K^2$
is integral. Then 
$H^i(X,\OOO_X)=0$ for $i>0$ and $\dim |-K_X|\ge K_X^2-1$.
\end{scorollary}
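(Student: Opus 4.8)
The plan is to apply the Riemann--Roch formula of Proposition~\ref{proposition-RR} to the divisor $D=-K_X$ on $X$, combining it with the vanishing of $H^i(X,\OOO_X)$ which itself must be established first. For the vanishing, I would argue that $X$ is a rational surface: since $X$ has log canonical rational singularities and $-K_X$ is ample and $\QQ$-Cartier, the pair $(X,0)$ is klt away from the strictly log canonical points, and in any case $-K_X$ big and nef together with Kawamata--Viehweg-type vanishing on the minimal resolution $\eta\colon Y\to X$ gives $H^i(Y,\OOO_Y)=0$ for $i>0$; since the singularities are rational, $H^i(X,\OOO_X)=H^i(Y,\OOO_Y)=0$, so in particular $\chi(\OOO_X)=1$. (Alternatively one can invoke the classification/Noether-type results of the previous section, since a del Pezzo surface with rational singularities admitting the relevant smoothings is rational.)

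Next I would feed this into \eqref{equation-proposition-RR} with $D=-K_X$: the leading term is $\frac12(-K_X)\cdot(-K_X-K_X)=K_X^2$, the term $\chi(\OOO_X)$ is $1$ by the vanishing just established, and the correction term $c_X(-K_X)=\sum_{P\in X}c_{P,X}(-K_X)$ is computed by \eqref{equation-cP}: each singular point contributes either $-1$ (case (DV)) or $0$ (log terminal or case (nDV)), so in all cases $c_X(-K_X)\le 0$. Thus
\begin{equation*}
\chi(\OOO_X(-K_X))=K_X^2+1+c_X(-K_X)+c'_X(-K_X).
\end{equation*}
Here $c'_X(-K_X)=h^0(R^1\eta_*\OOO_Y(\lfloor\eta^*(-K_X)\rfloor))-h^0(R^1\eta_*\OOO_Y)$, and since the singularities of $X$ are rational, $R^1\eta_*\OOO_Y=0$, so the subtracted term vanishes and $c'_X(-K_X)=h^0(R^1\eta_*\OOO_Y(\lfloor\eta^*(-K_X)\rfloor))\ge 0$.

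It remains to pass from $\chi$ to $h^0$. By Serre duality $h^2(X,\OOO_X(-K_X))=h^0(X,\OOO_X(2K_X))$, and since $-K_X$ is ample this latter group vanishes; similarly $h^1(X,\OOO_X(-K_X))\ge 0$ trivially, so $h^0(X,-K_X)\ge \chi(\OOO_X(-K_X))$. Combining, $\dim|-K_X|=h^0(X,-K_X)-1\ge \chi(\OOO_X(-K_X))-1=K_X^2+c_X(-K_X)+c'_X(-K_X)$. The only issue is that $c_X(-K_X)$ can be as negative as $-(\text{number of DV points})$, so to get the clean bound $\dim|-K_X|\ge K_X^2-1$ one needs $c_X(-K_X)+c'_X(-K_X)\ge -1$. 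The main obstacle is therefore controlling $c'_X(-K_X)$: I expect that in the (DV) case the contribution $c_X(-K_X)=-1$ (there is at most one non-log-terminal point, giving at most one $-1$) is compensated, or that a direct bound $h^1(X,-K_X)\le$ something forces the inequality; alternatively one shows $c'_X(-K_X)\ge -c_X(-K_X)-1$ by a local analysis of $R^1\eta_*\OOO_Y(\lfloor\eta^*(-K_X)\rfloor)$ at the relevant point. Once that bookkeeping of the correction terms is settled, the stated inequality $\dim|-K_X|\ge K_X^2-1$ follows immediately.
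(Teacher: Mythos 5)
Your overall route is the same as the paper's: apply Proposition~\ref{proposition-RR} to $D=-K_X$, use $\chi(\OOO_X)=1$, $c'_X(-K_X)\ge 0$ (via $R^1\eta_*\OOO_Y=0$) and the local values \eqref{equation-cP}, then pass from $\chi$ to $h^0$ by killing $h^2$ with Serre duality. Two points need attention.

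First, the step you flag as the ``main obstacle'' is a genuine gap as written: you need $c_X(-K_X)\ge -1$, i.e.\ that at most \emph{one} singular point is of type $(\mathrm{DV})$, and you leave this as an expectation with several candidate fixes. The correct fix is the parenthetical one you mention: a point contributes $-1$ in \eqref{equation-cP} only if it is strictly log canonical, and since $-K_X$ is ample, Shokurov's connectedness theorem applied to the pair $(X,0)$ shows that the non-klt locus is connected; as it consists of isolated points, there is at most one non-log-terminal point, hence at most one $(\mathrm{DV})$ contribution. (This is the same mechanism as Lemma~\ref{lemma-RR-and-one-point}\ref{lemma-RR-and-one-point2}, and it does not require the smoothing hypothesis.) There is no need to compare $c'_X$ against $c_X$ or to bound $h^1(-K_X)$; once the uniqueness of the non-lt point is in hand, $c_X(-K_X)+c'_X(-K_X)\ge -1$ and the inequality follows.

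Second, your vanishing argument for $H^1(X,\OOO_X)$ is shakier than it looks: $-K_Y=\eta^*(-K_X)+\Delta$ need not be nef (only $K_Y$ is $\eta$-nef), and the natural boundary $\Delta$ in $K_Y+\eta^*(-K_X)+\Delta\equiv 0$ has components with coefficient $1$ at the strictly lc points, so the klt form of Kawamata--Viehweg does not apply directly; one has to work with $-\lfloor\Delta\rfloor$ and the structure of $\lfloor\Delta\rfloor$ to conclude. The paper avoids this entirely: rational singularities make the Albanese map a morphism $X\to\operatorname{Alb}(X)$, and $\uprho(X)=1$ forces $\dim\operatorname{Alb}(X)=0$, whence $H^1(X,\OOO_X)=0$; $H^2(X,\OOO_X)=0$ is Serre duality as you say.
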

\begin{proof}
By the Serre duality $H^2(X,\OOO_X)=H^0(X,K_X)=0$. 
If the singularities of $X$ are rational, then the Albanese map 
is a well defined morphism $\operatorname{alb}:X\to\operatorname{Alb}(X)$.
Since $\uprho(X)=1$, we have $\dim\operatorname{Alb}(X)=0$ and so $H^1(X,\OOO_X)=0$.
The last inequality follows from \eqref{equation-proposition-RR} because 
$c'_X(-K_X)\ge 0$ and $c_X(-K_X)\ge -1$ (see \eqref{equation-cP}). 
\end{proof}

\section{Del Pezzo surfaces}
\label{section-Del-Pezzo-surfaces}
\begin{assumption}
\label{Assumptions}
From now on let $X$ be a del Pezzo surface satisfying the 
following conditions:
\begin{enumerate}
\item
the singularities of $X$ are log canonical and
$X$ has at least one non-log terminal point $o\in X$, 
\item
$X$ admits a $\QQ$-Gorenstein smoothing,
\item
$\uprho(X)=1$.
\end{enumerate}
\end{assumption}

\begin{lemma}
\label{lemma-RR-and-one-point}
In the above assumptions the following hold:
\begin{enumerate}
\item \label{lemma-RR-and-one-point1}
$\dim |-K_X|>0$,
\item \label{lemma-RR-and-one-point2}
$X$ has exactly one non-log terminal point.
\end{enumerate}
\end{lemma}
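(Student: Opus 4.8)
The statement to be proved is Lemma~\ref{lemma-RR-and-one-point}: under Assumption~\ref{Assumptions}, we have $\dim|-K_X|>0$ and $X$ has exactly one non-log terminal point. The first part is nearly immediate from what precedes. By Assumption~\ref{Assumptions}(2) every singularity of $X$ admits a $\QQ$-Gorenstein smoothing, so by Lemma~\ref{lemma-K2-integer} (applied at the log terminal points, which are automatically rational) and Proposition~\ref{Proposition-computation-K2} (at the strictly log canonical ones), the invariant $\K^2$ is integral at every singularity of $X$. Hence Corollary~\ref{lemma-RR} applies and gives $\dim|-K_X|\ge K_X^2-1$. Since $-K_X$ is ample, $K_X^2>0$, and because $\QQ$-Gorenstein smoothability forces $K_X^2\in\ZZ$ (this follows from the Noether-type identity \eqref{equation-Noether-formula}, or directly from $\K^2\in\ZZ$ at each point together with $K_Y^2\in\ZZ$), we get $K_X^2\ge 1$. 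But we need strict positivity of $\dim|-K_X|$. If $K_X^2\ge 2$ this is clear; the borderline $K_X^2=1$ requires an extra argument, which is supplied by the existence of a non-log terminal point $o$: by the table in Proposition~\ref{Proposition-computation-K2}, at such a point $-\K^2_{(X,o)}\ge 1$, so $c_{o,X}(-K_X)=-1$ (case (DV)) or the contribution of $o$ to the Noether identity is large; in either case one checks $K_X^2\ge 2$, and $\dim|-K_X|\ge 1$.

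For part~(ii) I would argue by contradiction, assuming $X$ has at least two non-log terminal points $o_1,o_2$. Each strictly log canonical point has $-\K^2\ge 1$ (index $1$: simple elliptic or cusp, where $-\K^2\ge 1$; index $>1$: the table of Proposition~\ref{Proposition-computation-K2} gives $-\K^2=n-2\ge 1$ or $-\K^2=n-9+\sum r_i\ge 1$). Combined with the Milnor-number formula \eqref{equation-computation-muP}, $\upmu_P=\K^2_{(X,P)}+\varsigma_P$, and Corollary~\ref{Corollary-mu-2}/Corollary~\ref{Corollary-computation-muP-T}, each non-log terminal point contributes at least some fixed positive amount to $\sum_P\upmu_P$ in the global Noether identity \eqref{equation-Noether-formula}: $K_X^2+\uprho(X)+\sum_P\upmu_P=10$. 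With $\uprho(X)=1$, this reads $K_X^2+\sum_P\upmu_P=9$. Since $K_X^2\ge 1$, the singularities must satisfy $\sum_P\upmu_P\le 8$. The key numerical point is that a non-log terminal point forces $\upmu_P$ to be large: for a simple elliptic point $\mathrm{Ell}_n$ one has $\varsigma_P=1$ and $\K^2=-n$, so $\upmu_P=1-n$, which is $\le 0$ — wait, that is non-positive, so the correct reading is via the refined bookkeeping of $K_X^2$ versus $\upmu_P$, i.e. I should instead use that $K_X^2 = 9 - \sum\upmu_P$ together with a \emph{lower} bound on $K_X^2$ coming from the local contributions, or equivalently use Corollary~\ref{lemma-RR} together with the structure. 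Concretely: two non-log terminal points would force, via \eqref{equation-Noether-formula} and the tables, $K_X^2$ to be simultaneously bounded above (by subtracting two large terms from $9$) and below (by ampleness and the local geometry of each $o_i$, each of which already ``eats'' a large chunk of the degree), and these bounds are incompatible. I would make this precise by tabulating, for each of the five types in Theorem~\ref{theorem-main} / Proposition~\ref{Proposition-computation-K2}, the pair $(\upmu_{o},\text{minimal possible }K_X^2\text{ when }o\in X)$, and observing that two such points never fit inside $K_X^2+\sum\upmu_P=9$ with $K_X^2\ge 1$.

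A cleaner route, which I would actually prefer, avoids the case analysis: use the Albanese/rationality input from Corollary~\ref{lemma-RR} to get $H^i(X,\OOO_X)=0$, hence by Proposition~\ref{proposition-RR} the linear system $|-K_X|$ has the expected dimension up to the correction terms $c_X(-K_X)$ and $c'_X(-K_X)$. Each non-log terminal point contributes $c_{o,X}(-K_X)=-1$ in case (DV) and $0$ in case (nDV). If $o$ is of type (nDV) the contribution is $0$, but then one uses instead that $X^\sharp$ has a simple elliptic point of large multiplicity, bounding $K_X^2$ from below via Corollary~\ref{canonical-cover-Z2} and the degree estimates, to conclude $K_X^2$ is already near $9$ and a second such point overflows \eqref{equation-Noether-formula}. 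The main obstacle — and the step I expect to require the most care — is precisely the bookkeeping in the (nDV) / mixed cases, where the naive correction term vanishes and one must extract the lower bound on $K_X^2$ from the local structure of the dlt modification (Corollary~\ref{proposition-classification-lc-singularities}) rather than from $c_{o,X}$. Once that lower bound is in hand, the global identity $K_X^2+\uprho(X)+\sum_P\upmu_P=10$ immediately rules out a second non-log terminal point, completing~(ii), and part~(i) follows as above since $K_X^2\ge 2$ in every surviving configuration except the simple-elliptic case $K_X^2=2$, where $\dim|-K_X|\ge 1$ directly.
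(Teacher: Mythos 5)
Your proposal diverges substantially from the paper's proof, and both parts have genuine gaps.

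For \ref{lemma-RR-and-one-point1}, the paper simply invokes semicontinuity: the general fiber $\XX_t$ of the $\QQ$-Gorenstein smoothing is a smooth del Pezzo surface with $h^0(-K_{\XX_t})=K_{\XX_t}^2+1=K_X^2+1\ge 2$, so $h^0(-K_X)\ge 2$. Your route through Corollary~\ref{lemma-RR} fails for two reasons. First, that corollary requires the singularities of $X$ to be rational, but the non-log terminal point $o$ may be simple elliptic or a cusp (index $1$), and by the corollary to Theorem~\ref{theorem-classification-lc-singularities} such points are \emph{not} rational; this is exactly case \ref{types-simple-elliptic} of the main theorem, so it cannot be discarded. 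Second, even where the corollary applies it only yields $\dim|-K_X|\ge K_X^2-1$, which is vacuous when $K_X^2=1$; and $K_X^2=1$ genuinely occurs (e.g.\ type \ref{types-I=6}, and types \ref{types-I=2-4-I2}--\ref{types-I=2-2IV} with $n=3$). Your claim that ``one checks $K_X^2\ge 2$'' is therefore false, and the semicontinuity argument is needed precisely to gain the extra $+1$ over Riemann--Roch.

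For \ref{lemma-RR-and-one-point2}, the paper's proof is Shokurov's connectedness theorem: since $-K_X$ is ample, the non-klt locus of $X$ is connected, and as it is a finite set of points it must be a single point. Your numerical approach via \eqref{equation-Noether-formula} cannot replace this. Beyond the rationality issue already noted (both \eqref{equation-Noether-formula} and \eqref{equation-computation-muP} are proved only for rational singularities, as your own aborted computation $\upmu_P=1-n\le 0$ for $\mathrm{Ell}_n$ reveals), the identity $K_X^2+\sum_P\upmu_P=9$ with $\upmu_P\ge 0$ simply does not forbid two non-log terminal points: for instance two points of type $[2;[2],[3],[6]]$ each contribute $\upmu_P=0$ and produce no overflow. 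You acknowledge that the crucial lower bound on $K_X^2$ in the (nDV)/mixed cases is not established; in fact no such purely numerical contradiction is available, and the connectedness theorem is the missing idea.
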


\begin{proof}
\ref{lemma-RR-and-one-point1} is implied by semicontinuity (cf. \cite[Theorem 4]{Manetti-1991}).
\ref{lemma-RR-and-one-point2}
follows from Shokurov's connectedness theorem
\cite[Lemma 5.7]{Shokurov-1992-e-ba}, \cite[Th. 17.4]{Utah}.
\end{proof}

\begin{construction}
\label{construction-main}
Let $\sigma:\tilde X\to X$ be a dlt modification and let 
\begin{equation*}
\tilde C=\sum_{i=1}^{s}\tilde C_i=\sigma^{-1}(o)
\end{equation*}
be the exceptional divisor. 
Thus $\uprho(\tilde X)=s+1$.

For some large $k$ the divisor $-kK_X$ is very ample.
Let $H\in |-kK_X|$ be a general member and let $\Theta:=\frac 1k H$.
Then $K_X+\Theta\equiv 0$ and the pair $(X,\Theta)$ is lc at $o$ and klt outside $o$. 
We can write 
\begin{equation}
\label{equation-crepant-formula}
K_{\tilde X}+\tilde C=\sigma^*K_X,
\qquad 
K_{\tilde X}+\tilde\Theta+\tilde C=\sigma^*(K_X+\Theta),
\end{equation}
where $\tilde \Theta$ is the proper transform of $\Theta$ on $\tilde X$. 
Clearly $\tilde C\cap\Supp(\tilde\Theta)=\emptyset$ and $\tilde \Theta$ is nef and big.
Note also that $K_{\tilde X}$ is $\sigma$-nef.
\end{construction}

\begin{scase}
\label{case-I=6-tildeX}
Let $D\in |-K_X|$ be a member such that $o\in\Supp(D)$.
This holds automatically for any member $D\in |-K_X|$
if $I>1$ because $-K_X$ is not Cartier at 
$o$ in this case.
In general, such a member exists by Lemma~\ref{lemma-RR-and-one-point}\ref{lemma-RR-and-one-point1}.
We have 
\begin{equation}
\label{equation-I=6-tildeX}
K_{\tilde X}+\sum m_i\tilde C_i+\tilde D\sim 0,\quad m_i\ge 2\quad \forall i.
\end{equation}
\end{scase}

\begin{case}
We distinguish two cases that will be treated in Sect.~\ref{section-fibrations}
and~\ref{section-del-pezzo} respectively:
\begin{enumerate}
\renewcommand\labelenumi{{\rm (\Alph{enumi})}}
\renewcommand\theenumi{{\rm (\Alph{enumi})}}
\item 
\label{division-fibrations}
there exists a fibration $\tilde X\to T$ over a smooth curve,
\item 
\label{division-del-pezzo}
$\tilde X$ has no dominant morphism to a curve.
\end{enumerate}
\end{case}

Note that the divisor  $-(K_{\tilde X}+\tilde C)$ is nef and big.
Therefore, in the case  \ref{division-fibrations} the generic fiber
of the fibration $\tilde X\to T$ is a smooth rational curve.

To show the existence of $\QQ$-Gorenstein smoothings we use unobstructedness 
of deformations:

\begin{proposition}[{\cite[Proposition 3.1]{Hacking-Prokhorov-2010}}]
\label{no-obstructions}
Let $Y$ be a projective surface with log canonical singularities 
such that $-K_Y$ is big. Then there are no local-to-global 
obstructions to deformations of $Y$.
In particular, if the singularities of $Y$ admit $\QQ$-Gorenstein smoothings, then 
the surface $Y$ admits a $\QQ$-Gorenstein smoothing. 
\end{proposition}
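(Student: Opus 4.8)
The plan is to reduce the statement to the $T^2$-vanishing criterion for unobstructedness of deformations, applied in the $\QQ$-Gorenstein setting. Recall that the deformations of a variety $Y$ (and, in the $\QQ$-Gorenstein case, the deformations of the canonical covering stack $\mathcal Y$) are governed by the cotangent complex: first-order deformations live in $\mathbb T^1_Y$ and obstructions live in $\mathbb T^2_Y$. There is a local-to-global spectral sequence $E_2^{p,q}=H^p(Y,\mathcal T^q_Y)\Rightarrow \mathbb T^{p+q}_Y$, so to kill the \emph{global} obstruction group it suffices to show that the three contributing terms $H^2(Y,\mathcal T^0_Y)$, $H^1(Y,\mathcal T^1_Y)$, and $H^0(Y,\mathcal T^2_Y)$ vanish. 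The first step is therefore to set up these groups carefully for a surface with log canonical singularities, working with the sheaves $\mathcal T^q_Y$ (or their $\QQ$-Gorenstein analogues on the covering stack), so that the conclusion about absence of local-to-global obstructions becomes exactly the statement $E_\infty^{2,0}=E_\infty^{1,1}=E_\infty^{0,2}=0$ on the global obstruction space $\mathbb T^2$.

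The second step handles the two ``global'' terms $H^2(Y,\mathcal T^0_Y)$ and $H^1(Y,\mathcal T^1_Y)$. For the first, I would use Serre duality: $H^2(Y,\mathcal T_Y)$ is dual to $\operatorname{Hom}(\mathcal T_Y,\omega_Y)=H^0(Y,\Omega^1_Y\otimes\omega_Y)$ (suitably interpreted for singular $Y$, passing to a resolution or the covering stack), and since $-K_Y$ is big this group vanishes by a Bogomolov–Sommese type vanishing, or more elementarily because a big anticanonical class forces $H^0$ of $\Omega^1\otimes\omega_Y$ to be zero (any such form would contradict bigness of $-K_Y$ via pull-back to a resolution and the positivity of $-K$). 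For $H^1(Y,\mathcal T^1_Y)$: the sheaf $\mathcal T^1_Y$ is supported on the singular locus, which is zero-dimensional, hence $H^1$ of a skyscraper sheaf vanishes trivially. The third step deals with the local term $H^0(Y,\mathcal T^2_Y)$: this is a sum of the local $T^2$'s of the germs $(Y\ni P)$, so the claim reduces to showing $T^2_{(Y,P)}=0$ for each singularity; this is where one invokes that log canonical surface singularities (again, more precisely, their canonical covers / covering stacks) are $T^2$-trivial — a known fact for rational double points and, in the $\QQ$-Gorenstein deformation theory, for the relevant class of singularities by the results already cited in the paper's bibliography (cf. the smoothability discussion of Section~\ref{sect-smoothings}).

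The last step is bookkeeping: once $\mathbb T^2_Y=0$ the deformation functor is unobstructed, so every first-order $\QQ$-Gorenstein deformation of $Y$ extends to a deformation over a smooth base; and if the local deformation spaces of the singularities of $Y$ each contain a smoothing direction, the local-to-global surjectivity on $\mathbb T^1$ (again from the spectral sequence, since $H^1(Y,\mathcal T^0_Y)$ maps onto the obstruction-free part and $H^0(Y,\mathcal T^1_Y)$ surjects onto the local first-order deformations) lets us choose a global deformation inducing the smoothing at each singular point, which is then a $\QQ$-Gorenstein smoothing of $Y$. The main obstacle is really the correct formulation and vanishing of the local $\mathcal T^2$ term for log canonical surface singularities in the $\QQ$-Gorenstein category: one must work on the index-one cover or the canonical covering stack, check equivariance, and be sure the cited $T^2$-vanishing applies to exactly the singularities allowed here; the two ``global'' vanishings are comparatively routine given bigness of $-K_Y$ and the zero-dimensionality of $\Sing(Y)$.
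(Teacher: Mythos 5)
The paper does not prove this proposition; it is quoted verbatim from \cite[Proposition 3.1]{Hacking-Prokhorov-2010}, and the core of that proof is exactly your second step: the local-to-global obstruction lives in $H^2(Y,\mathcal T^0_Y)=H^2(Y,\mathcal T_Y)$, which by Serre duality is dual to $\operatorname{Hom}(\mathcal T_Y,\omega_Y)$; a nonzero element would give an invertible subsheaf of $\Omega^{[1]}_Y$ of Iitaka dimension $2$ (namely $\omega_Y^{-1}$, big by hypothesis), contradicting Bogomolov--Sommese. Your observation that $H^1(Y,\mathcal T^1_Y)=0$ because $\mathcal T^1_Y$ is supported on the finite set $\Sing(Y)$ is also correct and is part of the standard bookkeeping.

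However, your third step rests on a misreading of the statement and on a false claim. ``No local-to-global obstructions'' does \emph{not} mean $\mathbb T^2_Y=0$; it means that the restriction morphism of deformation functors $\operatorname{Def}^{\mathrm{QG}}(Y)\to\prod_{P}\operatorname{Def}^{\mathrm{QG}}(Y\ni P)$ is smooth, for which the vanishing of the terms $H^2(Y,\mathcal T^0)$ and $H^1(Y,\mathcal T^1)$ suffices; the purely local term $H^0(Y,\mathcal T^2_Y)$ is \emph{not} required to vanish and in general does not. Your assertion that log canonical surface singularities (or their index-one covers) are ``$T^2$-trivial'' is false: simple elliptic and cusp singularities have obstructed, and frequently non-smoothable, deformation spaces --- indeed Theorem~\ref{Theorem-Q-smoothings} of this very paper leaves the existence of local $\QQ$-Gorenstein smoothings open in several cases, which would be absurd if the local functors were unobstructed. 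Fortunately this step is also unnecessary: once the restriction morphism is smooth, it is surjective on closed points of the (versal) bases, so a choice of one-parameter $\QQ$-Gorenstein smoothing at each singular point --- whose existence is a \emph{hypothesis} of the ``in particular'' clause, not something to be proved --- lifts to a global deformation of $Y$, which is the desired $\QQ$-Gorenstein smoothing. So the correct proof keeps your step two, discards the $T^2$-vanishing claim, and replaces ``$\mathbb T^2=0$ hence unobstructed'' by ``the local-to-global morphism is smooth, hence local smoothings globalize.''
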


However, in some cases the corresponding smoothings can be constructed explicitly:
\begin{sexample}
Consider the hypersurface $X\subset\PP(1,1,2,3)$ given by $z^2=y\phi_4(x_1,x_2)$.
Then $X$ is a del Pezzo surface with $K_X^2=1$.
The singular locus of $X$ consists of 
the point $(0:0:1:0)$ of type $[3; [2]^4]$
and four points
$\{z=y=\phi_4(x_1,x_2)=0\}$ of types $\rA_1$. 
Therefore, $X$ is of type~\ref{types-I=2-4-I2} with $n=3$.
\end{sexample}

\begin{sexample}
Consider the hypersurface $X\subset\PP(1,1,2,3)$ given by $(x_1^3-x_2^3)z+y^3=0$.
Then $X$ is a del Pezzo surface with $K_X^2=1$. The singular locus of $X$ consists of 
the point $(0:0:0:1)$ of type $[2; [3]^3]$
and three points
$(1:\zeta_3^k:0:0)$, $k=0,1,2$ of type $\rA_2$. Therefore, 
$X$ is of type~\ref{types-I=3}
with $n=2$.
\end{sexample}

\section{Proof of Theorem~\ref{theorem-main}: Fibrations}
\label{section-fibrations}
In this section we consider the case~\ref{division-fibrations} of~\ref{construction-main}.
First we describe quickly the singular fibers that occur in our classification.

\begin{case}
\label{List-A}
Let $Y$ be a smooth surface and let $Y\to T$ be a rational curve fibration.
Let $\Sigma\subset Y$ be a section and let $F$ be a singular fiber.
We say that $F$ is of type 
\type{(I_k)} or \type{(II)}
if its dual graph has the following form,
where $\square$ corresponds to $\Sigma$ and $\bullet$ corresponds to a $(-1)$-curve:
\begin{equation*}
\vcenter{
\xy
\xymatrix"M"{
\square\ar@{-}[r]&
\overset{k}\circ\ar@{-}[r]&\bullet\ar@{-}[r]&\circ\ar@{-}[r]&\cdots\ar@{-}[r]&\circ&
} 
\POS"M1,4"."M1,6"!C*\frm{_\}},-U*---!D\txt{$\scriptstyle{k-1}$}
\endxy
\vspace{16pt}
}
\leqno{\mathrm{(I_k)}}
\end{equation*}

\begin{equation*}
\vcenter{
\xy
\xymatrix@R=5pt{
&&&\circ\ar@{-}[r]&\bullet
\\
\square\ar@{-}[r]&\circ\ar@{-}[r]&\circ\ar@{-}[rd]\ar@{-}[ru]
\\
&&&\circ
}
\endxy 
\vspace{16pt}
}
\leqno{\mathrm{(II)}}
\end{equation*}
Assume that $Y$ has only fibers of these types 
\type{(I_k)} or \type{(II)}.
Let $Y\to \bar X$ be the contraction of all curves in fibers 
having self-intersections less than $-1$, i.e. corresponding to white vertices.
Then $\uprho(\bar X)=2$ and $\bar X$ has a contraction $\theta: \bar X\to T$.

\begin{sremark}
Let $\bar C\subset \bar X$ be the image of $\Sigma$.
Assume that $\bar X$ is projective,  $\bar C^2<0$, i.e. $\bar C$ is contractible,
and $(K_{\bar X}+\bar C)\cdot\bar C=0$.
For a general fiber $F$ of $\theta$ we have $(K_{\bar X}+\bar C)\cdot F=-1$.
Therefore, $-(K_{\bar X}+\bar C)$ is nef.
Now let $\bar X\to X$ be the contraction of $\bar C$. Then $X$ is a del Pezzo surface with 
$\uprho(X)=1$.
\end{sremark}
\end{case}

\begin{case}
Recall that we use the notation of~\ref{Assumptions} and~\ref{construction-main}. In this section 
we assume that $\tilde X$ has a rational curve fibration $\tilde X\to T$, where $T$ is a smooth curve
(the case~\ref{division-fibrations}).
Since $\uprho(X)=1$, the curve $\tilde C$ is not contained in the fibers.
A general fiber $\tilde F\subset\tilde X$ is a smooth rational curve.
By the adjunction formula $K_{\tilde X}\cdot\tilde F=-2$. By \eqref{equation-I=6-tildeX}
we have $\tilde F\cdot\sum m_i\tilde C_i=2$ and so $\tilde F\cdot\tilde D=0$.
Hence there exists exactly one component of $\tilde C$, say $\tilde C_1$, such that 
$\tilde F\cdot\tilde C_1=1$, $m_1=2$, and for $i\neq 1$ we have 
$\tilde F\cdot\tilde C_i=0$. This means that 
the divisor $\tilde D$ and the components $\tilde C_i$ with $i\neq 1$ are contained in the fibers
and $\tilde C_1$ is a section of the fibration $\tilde X\to T$.

Let us contract all the vertical components of $\tilde C$, i.e. the components $\tilde C_i$
with $i\neq 1$.
We get the following diagram 
\begin{equation*}
\xymatrix@C=80pt{
\tilde X\ar[d]_{\sigma}\ar[r]^{\nu}
&\bar X\ar[d]^{\theta}\ar[dl]
\\
X&T
}
\end{equation*}
Let $\bar C:=\nu_*\tilde C=\nu_*\tilde C_1$, $\bar\Theta=\nu_*\tilde\Theta$,
and $\bar D=\nu_*\tilde D$.
By \eqref{equation-crepant-formula} and \eqref{equation-I=6-tildeX}
we have 
\begin{equation}
\label{equation-I=6-barX-fibration}
K_{\bar X}+\bar C+\bar\Theta\equiv 0,\qquad
K_{\bar X}+2\bar C+\bar D\sim 0.
\end{equation}
Moreover, the pair $(\bar X,\bar C+\bar\Theta)$ is lc and if $I>1$, then $\dim |\bar D|>0$.
\end{case}

\begin{lemma}[cf. {\cite{Fujisawa1995}}]
\label{lemma-elliptic}
If the singularity $(X\ni o)$ is not rational, then 
$T$ is an elliptic curve, $\tilde X\simeq \bar X$ is smooth,
and $X$ is a generalized cone over $T$.
\end{lemma}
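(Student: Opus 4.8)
The plan is to reduce to the index-one situation, then to exclude the cusp case by an intersection-theoretic argument inside the rational curve fibration, and finally to recognise $\tilde X\to T$ as a $\PP^1$-bundle. First I would observe that a strictly log canonical non-rational surface singularity has index $1$: in Theorem~\ref{theorem-classification-lc-singularities} the cases of index $2,3,4,6$ are quotient singularities, hence rational, so $(X\ni o)$ must be simple elliptic or a cusp. By Corollary~\ref{proposition-classification-lc-singularities} the dlt modification $\sigma\colon\tilde X\to X$ then coincides with the minimal resolution, so $\tilde X$ is smooth and $\tilde C=\sum_{i=1}^{s}\tilde C_i=\sigma^{-1}(o)$ is either a smooth elliptic curve (when $s=1$), or a cycle of smooth rational curves with transverse double points (when $s\ge1$), or a nodal rational curve. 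Recall from Construction~\ref{construction-main} that $\uprho(\tilde X)=s+1$, and from the discussion preceding the lemma that $\tilde C_1$ is a section of $\tilde X\to T$, so $\tilde C_1\cdot\tilde F=1$ for a general fibre $\tilde F$, while $\tilde C_2,\dots,\tilde C_s$ and $\tilde D$ are vertical.

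The heart of the matter is to show $(X\ni o)$ cannot be a cusp. Suppose first $s\ge2$. Deleting $\tilde C_1$ from the cycle leaves the connected chain $\tilde C_2+\dots+\tilde C_s$, which, being connected and vertical, lies in a single fibre $\tilde F_0$. But $\tilde C_1$ meets the two ends $\tilde C_2$ and $\tilde C_s$ of this chain at two distinct points (no three components of the nodal curve $\tilde C$ pass through one point), whence $\tilde C_1\cdot\tilde F_0\ge2$, contradicting $\tilde C_1\cdot\tilde F_0=1$; when $s=2$ one instead uses $\tilde C_1\cdot\tilde F_0\ge\tilde C_1\cdot\tilde C_2=2$. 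If $s=1$ and $\tilde C=\tilde C_1$ is a nodal rational curve, then the fibre $\tilde F_0$ through its node $p$ satisfies $\tilde C_1\cdot\tilde F_0\ge\mult_p\tilde C_1=2$, again a contradiction. Hence $(X\ni o)$ is simple elliptic, $\tilde C=\tilde C_1$ is a smooth elliptic curve and $s=1$; in particular $\tilde C$ has no vertical component, the morphism $\nu\colon\tilde X\to\bar X$ contracts nothing, and $\tilde X\simeq\bar X$ is smooth.

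It remains to identify $\bar X=\tilde X$. Since $\tilde C_1$ is a section of $\tilde X\to T$ and is a smooth curve, the projection restricts to an isomorphism $\tilde C_1\to T$, so $T$ is elliptic. As $s=1$, Construction~\ref{construction-main} gives $\uprho(\tilde X)=2$; a smooth projective surface of Picard number $2$ fibred over a curve has no reducible fibre, and the section $\tilde C_1$ forbids multiple fibres, so $\tilde X\to T$ is a $\PP^1$-bundle with section $\tilde C_1$, and $\tilde C_1^2=-n<0$ by negativity of the $\sigma$-exceptional curve (i.e. $(X\ni o)$ is of type $\mathrm{Ell}_n$). Therefore $X$ is obtained from a $\PP^1$-bundle over the elliptic curve $T$ by contracting its negative section, that is, $X$ is a generalized cone over $T$.

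The step I expect to be the main obstacle is the exclusion of cusps: it requires balancing the combinatorics of the exceptional cycle against the single numerical constraint $\tilde C_1\cdot\tilde F=1$, and the nodal-rational-curve subcase has to be dispatched separately via multiplicities at the node. Once simple ellipticity is established, the reduction and the final structural identification are routine ruled-surface geometry.
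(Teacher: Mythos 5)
Your proof is correct and follows essentially the same route as the paper: reduce to index one via the classification, note that the section $\tilde C_1$ must be isomorphic to the smooth base $T$ while the remaining (vertical, connected) part of the exceptional cycle would meet it in a fiber with multiplicity $\ge 2$, thereby excluding cusps, and then conclude via $\uprho(\tilde X)=2$ that the fibration is a $\PP^1$-bundle whose negative section contracts to give a generalized cone. You merely spell out in detail the intersection-theoretic exclusion of the cusp case that the paper compresses into the single sentence ``$\tilde C$ cannot be a combinatorial cycle of smooth rational curves.''
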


\begin{proof}
By Theorem~\ref{theorem-classification-lc-singularities}\ref{Theorem-simple-elliptic-cusp=I=1} the surface $\tilde X$ is smooth along $\tilde C$.
Since $\tilde C_1$ is a section, we have $\tilde C_1\simeq T$ and $\tilde C$ cannot be a combinatorial 
cycle of smooth rational curves.
Hence both $\tilde C_1$ and $T$ are smooth elliptic curves.
Then $\tilde C=\tilde C_1$ and
$\uprho(\tilde X)=\uprho(X)+1=2$.
Hence any fiber $\tilde F$ of the fibration $\tilde X\to T$ is irreducible.
Since $\tilde F\cdot\tilde C_1=1$, any fiber is not multiple.
This means that $\tilde X\to T$ is a smooth morphism.
Therefore, $\tilde X$ is a geometrically ruled surface over an elliptic curve.
\end{proof}

From now on we assume that the singularities of $X$ are rational.
In this case, $T\simeq\PP^1$ and $\dim |\bar D|\ge \dim |-K_X|>0$ (see~\ref{case-I=6-tildeX}
and Lemma~\ref{lemma-RR-and-one-point}).

\begin{lemma}
Let $\bar F$ be a degenerate fiber \textup(with reduced structure\textup).
Then the dual graph of $\bar F$ has one of the forms described in \xref{List-A}:
\par\smallskip
\type{(I_k)} with $k=2$, $3$, $4$ or $6$, or \type{(II)}.
\end{lemma}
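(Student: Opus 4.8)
The plan is to analyze a degenerate fiber $\bar F$ of the contraction $\theta : \bar X \to T \simeq \PP^1$ by pulling back to the resolution $\tilde X$ and using the structure forced by the two relations in \eqref{equation-I=6-barX-fibration}, together with the log canonicity of $(\bar X, \bar C + \bar\Theta)$. First I would pass to the minimal resolution $Z \to \tilde X$ of the (cyclic quotient) singularities lying on the vertical components $\tilde C_i$, $i \neq 1$, so that $Z \to T$ is a genuine rational curve fibration of a smooth surface and the section $\tilde C_1$ lifts to a section $\Sigma$. The relation $K_{\bar X} + 2\bar C + \bar D \sim 0$, pulled back, says that on $Z$ the supporting cycle of $K_Z + 2\Sigma + (\text{vertical part})$ is linearly trivial; since $\bar D$ is a general member of a linear system of positive dimension whose general member avoids $o$, the divisor $\bar D$ restricted to a degenerate fiber is empty, so each degenerate fiber is built entirely from $\Sigma$, the vertical $\tilde C_i$'s, and extra $(-1)$-curves produced by resolution and by the contraction $\nu$.

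Next I would run the standard combinatorial analysis of a degenerate fiber of a rational curve fibration on a smooth surface that meets a fixed section $\Sigma$ once. Such a fiber is a tree of $\PP^1$'s obtained by blowing up; repeatedly contracting $(-1)$-curves not equal to $\Sigma$ and not meeting $\Sigma$ recovers a minimal model, and tracking which curves become the white vertices (self-intersection $< -1$) of \xref{List-A} versus the black $(-1)$-vertices, one sees the only possible shapes are the linear chain $(\mathrm{I}_k)$ and the ``$D$-type'' configuration $(\mathrm{II})$ — precisely because $\Sigma$ meets the fiber at exactly one point and the chain of vertical $\tilde C_i$'s attached to $\Sigma$ in the dlt modification is itself a chain (by Corollary \xref{proposition-classification-lc-singularities}). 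This yields the list of graph shapes; what remains is to pin down $k$.

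To bound $k$ I would use the index. By Corollary \xref{proposition-classification-lc-singularities}, in the dlt modification $\tilde X$ the singular points on $\tilde C$ are either four $\rA_1$ points (index $2$), or three cyclic quotient points of types $\frac1{r_i}(1,q_i)$ with $\sum 1/r_i = 1$ and $I = \lcm(r_1,r_2,r_3) \in \{3,4,6\}$, so $(r_1,r_2,r_3)$ is $(3,3,3)$, $(2,4,4)$, or $(2,3,6)$. When a degenerate fiber $\bar F$ contains such a singular point $P_i$ of $\bar X$, the corresponding white vertex of the graph in \xref{List-A} has weight $k = r_i$ (its self-intersection is $-r_i$ after computing the resolution chain $\langle r_i, q_i\rangle$ and contracting), so $k \in \{2,3,4,6\}$; when $\bar F$ contains no singular point of $\bar X$ on it then $k = 2$. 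Thus $k \in \{2,3,4,6\}$ in all cases, and together with the shape dichotomy this is exactly the claimed statement. I expect the main obstacle to be the bookkeeping in the second paragraph: verifying carefully that no other fiber shapes can arise — in particular that $\Sigma$ cannot pass through the singular point, that the auxiliary $(-1)$-curves attach in the asserted positions, and that the contraction $\nu$ does not destroy or create additional branches — which requires keeping track of intersection numbers and discrepancies along the whole chain $Z \to \tilde X \to \bar X$.
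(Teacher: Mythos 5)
There is a genuine gap: the heart of the lemma is precisely the claim you defer to ``standard combinatorial analysis'' in your second paragraph, namely that the only dual graphs that can occur are $(\mathrm{I}_k)$ and $(\mathrm{II})$. This does not follow merely from the fact that the section meets the fiber once: a degenerate fiber of a rational curve fibration meeting a section transversally at one point can a priori be an arbitrary tree of rational curves (longer chains, a $(-1)$-curve in the middle, several branches, several singular points of $\bar X$ on $\bar F$, etc.). The paper excludes all of these by a different route: since $\uprho(\bar X)=2$, every fiber of $\theta$ is irreducible, so $\bar C\cdot\bar F=1/m$ forces $\bar P=\bar C\cap\bar F$ to be a \emph{singular} point of $\bar X$; Shokurov's connectedness theorem makes $(\bar X,\bar C+\bar F)$ plt outside $\bar C$; and one then splits according to whether $(\bar X,\bar F)$ is plt at $\bar P$. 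In the plt case the two singular points on $\bar F$ are of types $\frac1n(1,\pm q)$ and Theorem~\xref{Theorem-Q-smoothings} forces $n\in\{2,3,4,6\}$ and $q=1$, giving $(\mathrm{I}_k)$. In the non-plt case Theorem~\xref{Theorem-Q-smoothings} forces $(o\in X)$ to be of type $[n_1,\dots,n_s;[2]^4]$, the index-$2$ relation $(K_{\bar X}+\bar C)\cdot m\bar F=-1$ gives $m=2$, a computation with $\Diff_{\bar F}(\bar C)$ shows $\bar P$ is the \emph{only} singular point on $\bar F$, and an explicit sequence of contractions of $(-1)$-curves pins the black vertex to the end of the chain and forces $n_k=n_{k-1}=2$, $k=2$, i.e.\ type $(\mathrm{II})$. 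None of these steps appears in your sketch.

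In addition, two of the structural claims you use to set up the analysis are incorrect. First, $D$ is chosen in~\xref{case-I=6-tildeX} so that $o\in\Supp(D)$, and $\tilde D$ is vertical ($\tilde F\cdot\tilde D=0$); so $\bar D$ neither avoids $o$ nor avoids the degenerate fibers, and your conclusion that ``each degenerate fiber is built entirely from $\Sigma$, the vertical $\tilde C_i$'s, and extra $(-1)$-curves'' is unjustified (and also unnecessary). Second, your final case ``when $\bar F$ contains no singular point of $\bar X$ then $k=2$'' cannot occur: an irreducible fiber passing only through smooth points of $\bar X$ is not degenerate. What you do identify correctly is the source of the bound $k\in\{2,3,4,6\}$: it is indeed Theorem~\xref{Theorem-Q-smoothings}, which constrains the quotient singularities sitting on $\bar C$ to be of type $\frac1{r_i}(1,1)$ with $r_i\in\{2,3,4,6\}$. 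But without the plt/non-plt dichotomy at $\bar P$ and the accompanying adjunction computations, the shape of the graph is not established.
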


\begin{proof}
Let $\bar P:=\bar C\cap\bar F$.
Since $-(K_{\bar X}+\bar C+\bar F)$ is $\theta$-ample,
the pair $({\bar X},\bar C+\bar F)$ is plt outside $\bar C$ by Shokurov's connectedness theorem.
Let $m$ be the multiplicity of $\bar F$.
Since $\bar C$ is a section of $\theta$, we have $\bar C\cdot\bar F=1/m<1$ and so
the point $\bar P\in\bar X$ is singular. 

If the pair $(\bar X,\bar F)$ is plt at $\bar P$, then $\bar X$ has 
on $\bar F$ two singular points and these points are of types 
$\frac 1n (1,q)$ and $\frac 1n (1,-q)$ (see e.g. \cite[Th. 7.1.12]{Prokhorov-2001}). 
We may assume that $\bar P\in\bar X$ is of type $\frac 1n (1,q)$.
In this case, $m=n$ and the pair $({\bar X},\bar C+\bar F)$ is lc at $\bar P$
because $\bar C\cdot\bar F=1/n$. 
By Theorem~\ref{Theorem-Q-smoothings}
we have $n=2$, $3$, $4$, or $6$ and $q=1$.
We get the case \type{(I_k)}.
From now on we assume that $(\bar X,\bar F)$ is not plt at $\bar P$.
In particular, $(\bar X\ni\bar P)$ is not of type $\frac 1n(1,1)$.
Then again by Theorem~\ref{Theorem-Q-smoothings} the singularity $(o\in X)$
is of type $[n_1,\dots, n_s; [2]^4]$. Hence the part of the dual graph of $F$ 
attached to $C_1$ has the form
\begin{equation*}
\vcenter{
\xy
\xymatrix@R=5pt{ 
&&&&\circ
\\
\underset{\bar C}\square\ar@{-}[r]&\overset{n_1}\circ\ar@{-}[r]&\cdots\ar@{-}[r]&\overset{n_k}\circ\ar@{-}[ru]\ar@{-}[rd]&
\\
&&&&\circ
}
\endxy 
}
\end{equation*}
where $k\ge 1$.
Then $K_{\bar X}+\bar C$ is of index $2$ at $\bar P$ (see \cite[Prop. 16.6]{Utah}).
Since $(K_{\bar X}+\bar C)\cdot m\bar F=-1$, the number
$2(K_{\bar X}+\bar C)\cdot \bar F=-2/m$
must be an integer.
Therefore, $m=2$. Assume that $\bar X$ has a singular point $\bar Q$
on $\bar F\setminus\{\bar P\}$. We can write $\Diff_{\bar F}(0)=\alpha_1\bar P+\alpha_2\bar Q$,
where $\alpha_1\ge 1$ (by the inversion of adjunction) and $\alpha_2\ge 1/2$. Then $\Diff_{\bar F}(\bar C)=\alpha_1'\bar P+\alpha_2\bar Q$,
where $\alpha_1'=\alpha_1+\bar F\cdot\bar C\ge 3/2$. On the other hand, the divisor
\[
-(K_{\bar F}+\Diff_{\bar F}(\bar C))=-(K_{\bar X}+\bar F+\bar C)|_{\bar F}
\]
is ample. Hence, $\deg\Diff_{\bar F}(\bar C)<2$, a contradiction.
Thus $\bar P$ is the only singular point of $\bar X$ on $\bar F$.
We claim that $\bullet$ is attached to 
one of the $(-2)$-curves at the end of the graph.
Indeed, assume that 
the dual graph of $F$ 
has the form
\begin{equation*}
\vcenter{
\xy
\xymatrix@R=5pt{ 
&&&&&&\circ
\\
\underset{\bar C}\square\ar@{-}[r]&\overset{n_1}\circ\ar@{-}[r]&\cdots\ar@{-}[r]&\overset{n_i}\circ\ar@{-}[r]&\cdots\ar@{-}[r]&\overset{n_k}\circ\ar@{-}[ru]\ar@{-}[rd]&
\\
&&&\bullet\ar@{-}[u]&&&\circ
}
\endxy 
}
\end{equation*}
where $1\le i\le k$. Clearly, $n_i=2$.
Contracting the $(-1)$-curve $\bullet$ we obtain the following graph
\begin{equation*}
\vcenter{
\xy
\xymatrix@R=5pt{ 
&&&&&&\circ
\\
\underset{\bar C}\square\ar@{-}[r]&\overset{n_1}\circ\ar@{-}[r]&\cdots\ar@{-}[r]&\bullet\ar@{-}[r]&\cdots\ar@{-}[r]&\overset{n_k}\circ\ar@{-}[ru]\ar@{-}[rd]&
\\
&&&&&&\circ
}
\endxy 
}
\end{equation*}
Continuing the process, on each step we have a configuration of the same type 
and finally we get the dual graph 
\begin{equation*}
\vcenter{
\xy
\xymatrix@R=5pt{ 
&&&&&\circ
\\
\underset{\bar C}\square\ar@{-}[r]&
\overset{n_1'}\circ\ar@{-}[r]&\cdots\ar@{-}[r]&\overset{n_{j}'}\circ\ar@{-}[r]&
\bullet\ar@{-}[ru]\ar@{-}[rd]&
\\
&&&&&\circ
}
\endxy 
}
\end{equation*}
where $j\ge 0$. Then the next contraction gives us a configuration which is not a 
simple normal crossing divisor. The contradiction proves our claim.
Similar arguments show that  
$n_k=n_{k-1}=2$ and $k=2$,
i.e. we get the case \type{(II)}.
\end{proof}

\begin{proof}[Proof of Theorem \xref{theorem-main} in the case \xref{construction-main}\xref{division-fibrations}]
If all the fibers are smooth, then by Lemma~\ref{lemma-elliptic} we have the case~\ref{types-simple-elliptic}.
If there exist a fiber of type \type{(I_k)} with $k>2$, then
$I>2$ and by Theorem~\ref{Theorem-Q-smoothings} we have cases~\ref{types-I=3},~\ref{types-I=4-I22I4},~\ref{types-I=6}.
If all the fibers are of types \type{(I_2)} or \type{(II)}, then $I=2$ 
and we have cases~\ref{types-I=2-4-I2},~\ref{types-I=2-2I2-IV},~\ref{types-I=2-2IV}.
The computation of $K_X^2$ follows from \eqref{equation-Noether-formula}
and \eqref{equation-computation-muP-nG}.
\end{proof}

\section{Proof of Theorem~\ref{theorem-main}: Birational contractions}
\label{section-del-pezzo}
\begin{case}
In this section we assume that $\tilde X$ has no dominant morphism to a curve
(case~\ref{construction-main}\ref{division-del-pezzo}). It will we shown that this case does not occur.

Run the $K_{\tilde X}$-MMP on $\tilde X$.
Since $-K_{\tilde X}$ is big, on the last step we get a Mori fiber space $\bar X\to T$
and by our assumption $T$
cannot be a curve. Hence $T$ is a point and $\bar X$ is a del Pezzo surface with 
$\uprho(\bar X)=1$.
Moreover, the singularities of $\bar X$ are log terminal and so $\bar X\not\simeq X$.
Thus we get the following diagram 
\begin{equation*}
\xymatrix@R=0.7pc{
&\tilde X\ar[dl]_{\sigma}\ar[rd]^{\nu} 
\\
X\ar@{-->}[rr]&&\bar X
}
\end{equation*}
Put $\bar C:=\nu_* \tilde C$ and $\bar C_i:=\nu_* \tilde C_i$.
By \eqref{equation-I=6-tildeX} we have
\begin{equation}
\label{equation-I=6-barX}
K_{\bar X}+\sum m_i\bar C_i+\bar D\sim 0,\qquad m_i\ge 2.
\end{equation}
Since $\uprho(X)=\uprho(\bar X)$ and $\tilde C$ is the 
$\sigma$-exceptional divisor, the whole $\tilde C$ cannot be 
contracted by $\nu$.
\end{case}

\begin{lemma}
\label{lemma-fiber-meets-C}
Any fiber $\nu^{-1}(\bar P)$ of positive dimension meets $\tilde C$.
\end{lemma}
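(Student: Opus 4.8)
The plan is to argue by contradiction. Suppose $W:=\nu^{-1}(\bar P)$ has positive dimension and $W\cap\tilde C=\emptyset$, and fix an irreducible component $F\subseteq W$. Since $F$ is contracted to a point by the birational morphism $\nu$ between normal projective surfaces, the intersection form on the fibre $\nu^{-1}(\bar P)$ is negative definite, so $F^2<0$. I will derive a contradiction by showing that in fact $F^2>0$.

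For the positivity I work in $N^1(\tilde X)_{\mathbb R}$, which by \ref{construction-main} has dimension $\uprho(\tilde X)=s+1$. The classes $\tilde C_1,\dots,\tilde C_s$ span a negative-definite subspace, because $\sigma$ contracts $\tilde C$ to a point of the projective surface $X$; in particular the $s\times s$ matrix $(\tilde C_i\cdot\tilde C_j)$ is nondegenerate. Put $B:=-\sigma^*K_X=-(K_{\tilde X}+\tilde C)$, using \eqref{equation-crepant-formula}; this is the pullback of an ample $\QQ$-Cartier divisor, hence nef and big, and it satisfies $B^2=K_X^2>0$ together with $B\cdot\tilde C_j=-K_X\cdot\sigma_*\tilde C_j=0$ for all $j$ (as $\tilde C_j$ is $\sigma$-exceptional). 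Therefore the Gram matrix of $\{B,\tilde C_1,\dots,\tilde C_s\}$ is block diagonal with blocks $(K_X^2)$ and $(\tilde C_i\cdot\tilde C_j)$, hence nondegenerate, so these $s+1$ classes form an $\mathbb R$-basis of $N^1(\tilde X)_{\mathbb R}$.

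Now write $[F]=\beta B+\sum_i\gamma_i[\tilde C_i]$. Intersecting with $\tilde C_j$ and using $F\cap\tilde C=\emptyset$ gives $\sum_i\gamma_i(\tilde C_i\cdot\tilde C_j)=0$ for every $j$, whence $\gamma_i=0$ for all $i$ by nondegeneracy of $(\tilde C_i\cdot\tilde C_j)$; thus $[F]=\beta B$. Since $F$ avoids $\tilde C$, the dlt modification $\sigma$ is an isomorphism near $F$, so $Z:=\sigma(F)$ is an irreducible curve on $X$ with $o\notin Z$, and the projection formula gives $B\cdot F=-K_X\cdot Z>0$ because $-K_X$ is ample. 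Comparing with $B\cdot F=\beta B^2=\beta K_X^2$ forces $\beta>0$, and then $F^2=\beta^2 B^2=\beta^2 K_X^2>0$, contradicting $F^2<0$. This proves the lemma.

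The only place needing care is the appeal to negative-definiteness, both for $(\tilde C_i\cdot\tilde C_j)$ and for the inequality $F^2<0$ for a contracted curve; but these are the standard Mumford--Grauert facts about exceptional loci of birational contractions of normal projective surfaces, so I do not expect a genuine obstacle — once they are in place the rest is linear algebra in $N^1(\tilde X)_{\mathbb R}$.
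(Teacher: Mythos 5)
Your proof is correct and rests on the same idea as the paper's (three-line) argument, namely that $\uprho(X)=1$ together with negative definiteness of exceptional configurations forbids a contractible curve disjoint from $\tilde C$; your explicit basis $\{-\sigma^*K_X,\tilde C_1,\dots,\tilde C_s\}$ of $N^1(\tilde X)_{\mathbb R}$ is just a careful expansion of the paper's terse appeal to ``$\uprho(X)=1$''. If anything your version is more self-contained, since it does not need the paper's intermediate observation that $\nu^{-1}(\bar P)\not\subset\tilde C$.
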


\begin{proof}
Since $\bar X$ is normal, $\nu^{-1}(\bar P)$ is a connected contractible effective divisor.
Since all the components of $\tilde C$ are $K_{\tilde X}$-non-negative,
$\nu^{-1}(\bar P)\not\subset\tilde C$.
Since $\uprho(X)=1$, we have $\nu^{-1}(\bar P)\cap\tilde C\neq \emptyset$.
\end{proof}

\begin{lemma}
\label{lemma-fiber-meets-C-1}
If $\nu$ is not an isomorphism over $\bar P$, then $(\bar X,\bar C)$ is plt at $\bar P$.
In particular, $\bar C$ is smooth at $\bar P$.
\end{lemma}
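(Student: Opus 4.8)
The plan is to transfer the crepant/dlt structure from $\tilde X$ to $\bar X$ via the negativity lemma and then bound from below all discrepancies over $\bar P$. By \eqref{equation-crepant-formula} we have $K_{\tilde X}+\tilde C=\sigma^*K_X$, so $-(K_{\tilde X}+\tilde C)=\sigma^*(-K_X)$ is $\sigma$-trivial, nef and big; in particular, for any irreducible curve $\ell\subset\tilde X$ one has $(K_{\tilde X}+\tilde C)\cdot\ell=K_X\cdot\sigma_*\ell$, which is $<0$ when $\sigma_*\ell\neq0$ (because $-K_X$ is ample) and $=0$ exactly when $\ell\subset\tilde C$. The point I would need — and which I expect to be the main obstacle — is that $\nu$ contracts no component of $\tilde C$; granting this, every $\nu$-exceptional curve $\ell$ satisfies $\ell\not\subset\tilde C$ and hence $(K_{\tilde X}+\tilde C)\cdot\ell<0$.

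Assuming this, write $K_{\tilde X}+\tilde C=\nu^*(K_{\bar X}+\bar C)+\Xi$ with $\Xi$ $\nu$-exceptional (this is legitimate since $\bar X$ is $\QQ$-factorial, being a log terminal surface). For each $\nu$-exceptional curve $\ell$ one gets $\Xi\cdot\ell=(K_{\tilde X}+\tilde C)\cdot\ell<0$, so $-\Xi$ is $\nu$-nef; the negativity lemma gives $\Xi\geq0$, and the same inequality forces $\ell\subset\Supp\Xi$ for every such $\ell$, so $\Supp\Xi$ is the whole $\nu$-exceptional locus. Since no component of $\tilde C$ is $\nu$-exceptional, $\tilde C$ is the proper transform of $\bar C$, and therefore $a(E,\bar X,\bar C)=\mult_E\Xi>0$ for every $\nu$-exceptional prime divisor $E$.

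Next I would show that every divisor $E$ with $\operatorname{center}_{\bar X}(E)=\bar P$ has $a(E,\bar X,\bar C)>-1$, which is precisely plt at $\bar P$. If $E$ is $\nu$-exceptional this is the inequality just obtained. Otherwise $\operatorname{center}_{\tilde X}(E)$ is a point $q$, and $q\in\nu^{-1}(\bar P)$; since $\nu$ is not an isomorphism over $\bar P$, the fibre $\nu^{-1}(\bar P)$ is a curve contained in the $\nu$-exceptional locus $=\Supp\Xi$, so $q\in\Supp\Xi$ and $\operatorname{ord}_E\Xi\geq\mult_q\Xi>0$. From $\nu^*(K_{\bar X}+\bar C)=K_{\tilde X}+\tilde C-\Xi$ we then get
\[
a(E,\bar X,\bar C)=a(E,\tilde X,\tilde C)+\operatorname{ord}_E\Xi\geq-1+\operatorname{ord}_E\Xi>-1,
\]
using that $(\tilde X,\tilde C)$ is dlt. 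Hence $(\bar X,\bar C)$ is plt at $\bar P$, and by inversion of adjunction (Theorem~\ref{Inversion-of-Adjunction}) $\bar C$ is normal, hence smooth, at $\bar P$.

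The hard part is the input that $\nu$ contracts no component of $\tilde C$: although the $\tilde C_i$ are $K_{\tilde X}$-nonnegative, the $K$-degrees of their birational transforms can drop along the MMP, so one must argue with the chain (resp. single-curve) structure of $\tilde C$ furnished by Corollary~\ref{proposition-classification-lc-singularities} together with the fact that all of $\tilde C$ cannot be contracted. If one only wants the \emph{lc} part of the statement, a clean substitute is to push forward the numerically trivial lc pair $(\tilde X,\tilde C+\tilde\Theta)$: it is $\nu$-trivial, so by the negativity lemma $\nu$ is crepant for it and $(\bar X,\bar C+\bar\Theta)$ is lc, whence $(\bar X,\bar C)$ is lc near $\bar P$; the upgrade to plt then amounts to excluding that a $\nu$-exceptional curve $\ell$ passes through an intersection point $\tilde C_i\cap\tilde C_{i+1}$, which follows because such an $\ell$ has $\tilde C\cdot\ell\geq2$, while $(K_{\tilde X}+\ell)\cdot\ell\geq-2$ and $\ell^2<0$ force $K_{\tilde X}\cdot\ell>-2$, contradicting $(K_{\tilde X}+\tilde C)\cdot\ell<0$.
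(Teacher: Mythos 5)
Your main line has a genuine gap, and it is exactly the one you flag: the input that $\nu$ contracts no component of $\tilde C$. This cannot be supplied at this stage. That statement is equivalent to the irreducibility of $\tilde C$, which is Proposition~\ref{proposition-tilde-C-irreducible} and is proved only \emph{after} (and using) the present lemma, via Corollary~\ref{corollary-dlt} and Lemma~\ref{lemma-barC-irrducible}; indeed, in the proof of that proposition the reducible case is entertained, and there $\nu$ contracts $s-1$ of the $s$ components of $\tilde C$. The chain structure together with ``not all of $\tilde C$ is contracted'' does not exclude this: only the \emph{first} extremal contraction is guaranteed to be $K_{\tilde X}$-negative and hence to have exceptional curve not contained in $\tilde C$ (this is Lemma~\ref{lemma-E}); at later steps the images of the $\tilde C_i$ may become $K$-negative and get contracted. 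Your fallback sketch is also incomplete: after establishing that $(\bar X,\bar C+\bar\Theta)$ is lc, the obstruction to plt at $\bar P$ is not only a divisor lying over a node $\tilde C_i\cap\tilde C_{i+1}$, but also any contracted component of $\tilde C$ mapping to $\bar P$ --- such a component has discrepancy $-1$ with respect to $(\tilde X,\tilde C)$ and a priori keeps discrepancy $-1$ with respect to $(\bar X,\bar C)$ --- and your numerical argument about curves $\ell\not\subset\tilde C$ says nothing about these.

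The paper's mechanism, which your proposal never invokes, is the boundary $\tilde\Theta$. Since $K_{\tilde X}+\tilde C+\tilde\Theta\equiv 0$, the pair $(\bar X,\bar C+\bar\Theta)$ is lc (crepant pushforward, as in your fallback). By Kodaira's lemma $\tilde\Theta-\sum\alpha_i\tilde C_i$ is ample for suitable $\alpha_i>0$; since some component of $\nu^{-1}(\bar P)$ meets $\tilde C$ by Lemma~\ref{lemma-fiber-meets-C}, that component has positive intersection with $\tilde\Theta$, so $\bar P\in\Supp(\bar\Theta)$. Consequently any divisor over $\bar P$ with discrepancy $-1$ for $(\bar X,\bar C)$ would have discrepancy strictly less than $-1$ for $(\bar X,\bar C+\bar\Theta)$, contradicting lc; this handles contracted components of $\tilde C$, nodes, and everything else at once. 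To repair your argument you should reduce to showing $\bar P\in\Supp(\bar\Theta)$ in this way, rather than to a claim about which curves $\nu$ contracts.
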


\begin{proof}
Since $K_{\tilde X}+\tilde C+\tilde\Theta\equiv 0$,
the pair $(\bar X,\bar C+\bar\Theta)$ is lc.
By the above lemma there exists a component $\tilde E$ of $\nu^{-1}(\bar P)$
meeting $\tilde C$.
By Kodaira's lemma the divisor $\tilde\Theta-\sum\alpha_i\tilde C_i$ is ample for
some $\alpha_i>0$.
Hence $\tilde E$ meets $\tilde\Theta$ and so $\Supp (\bar\Theta)$ contains $\bar P$.
Therefore, $(\bar X,\bar C)$ is plt at $\bar P$.
\end{proof}

\begin{scorollary}
\label{corollary-dlt}
$(\bar X,\bar C)$ is dlt. 
\end{scorollary}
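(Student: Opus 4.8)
The plan is to verify that $(\bar X,\bar C)$ is dlt by checking it in a neighbourhood of each point of $\bar X$, splitting according to whether or not $\nu$ is an isomorphism there.

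Since $\nu\colon\tilde X\to\bar X$ is a proper birational morphism of normal surfaces, Zariski's main theorem shows that $\nu$ is an isomorphism over the complement of a finite set $Z\subset\bar X$, namely the set of images of the positive-dimensional fibres of $\nu$. Over $\bar X\setminus Z$ the pair $(\bar X,\bar C)$ is locally isomorphic to $(\tilde X,\tilde C)$: any component of $\tilde C$ contracted by $\nu$ is contained in $\nu^{-1}(Z)$ and so does not meet $\nu^{-1}(\bar X\setminus Z)$, while the remaining components of $\tilde C$ map isomorphically onto $\bar C\cap(\bar X\setminus Z)$. Now $(\tilde X,\tilde C)$ is dlt, because $\sigma\colon\tilde X\to X$ is a dlt modification with $K_{\tilde X}+\tilde C=\sigma^*K_X$ (see \eqref{equation-min-resolution} and \eqref{equation-crepant-formula}); hence $(\bar X,\bar C)$ is dlt in a neighbourhood of every point of $\bar X\setminus Z$. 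At a point $\bar P\in Z$ the morphism $\nu$ is not an isomorphism, so by Lemma~\ref{lemma-fiber-meets-C-1} the pair $(\bar X,\bar C)$ is plt, hence dlt, near $\bar P$. That $(\bar X,\bar C)$ is lc to begin with — which is needed even to speak of its being dlt — follows from the log canonicity of $(\bar X,\bar C+\bar\Theta)$ established in the proof of Lemma~\ref{lemma-fiber-meets-C-1}, since $\bar\Theta\ge 0$.

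Finally, to pass from ``dlt near each point'' to ``dlt'' one uses that being dlt is a local property of the pair: by the standard characterization (Szab\'o's resolution lemma; see e.g.\ \cite{Kollar-Mori-1988}, \cite{Utah}) an lc pair is dlt precisely when, after deleting the open locus on which it is snc, every divisor extracted by a single log resolution has discrepancy $>-1$, and this may be tested on an open cover. For surfaces one may argue even more directly, since there $(\bar X,\bar C)$ is dlt iff it is lc and at every point it is either plt or a smooth point of $\bar X$ at which $\bar C$ is smooth or an ordinary node, a description manifestly satisfied by the two local models above. There is no real obstacle in this corollary; the only points requiring a little care are the identification of the local models over $\bar X\setminus Z$ and the appeal to the locality of the dlt condition.
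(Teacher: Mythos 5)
Your proof is correct and is exactly the argument the paper intends: the corollary is stated without proof as an immediate consequence of Lemma~\ref{lemma-fiber-meets-C-1}, namely that $(\bar X,\bar C)$ is plt where $\nu$ is not an isomorphism and locally isomorphic to the dlt pair $(\tilde X,\tilde C)$ elsewhere, with dlt being a local condition. Your spelled-out version, including the remark that log canonicity comes from $(\bar X,\bar C+\bar\Theta)$ being lc, fills in the details faithfully.
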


\begin{lemma}
\label{lemma-barC-irrducible}
\begin{enumerate}
\item
\label{lemma-barC-irrducible-1}
$\bar C$ is an irreducible smooth rational curve; 
\item
\label{lemma-barC-irrducible-2}
$\bar X$ has at most two singular points on $\bar C$;

\item 
\label{lemma-barC-irrducible-3}
the singularities of $X$ are rational \textup{(see also \cite[Corollary 1.9]{Fujisawa1995})}.
\end{enumerate}
\end{lemma}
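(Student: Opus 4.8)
The plan is to push everything through the adjunction formula \eqref{equation-adjunction} along $\bar C$, using only that $(\bar X,\bar C)$ is dlt (Corollary~\ref{corollary-dlt}), that $\uprho(\bar X)=1$, and the two relations for $-K_{\bar X}$ obtained by pushing forward \eqref{equation-crepant-formula} and \eqref{equation-I=6-tildeX} by $\nu$. The first step is to record the local structure. Since $\nu$ is birational, $K_{\bar X}+\bar C+\bar\Theta\equiv 0$ and $K_{\bar X}+\sum m_i\bar C_i+\bar D\sim 0$ with $m_i\ge 2$; here $\bar\Theta$ and $\bar D$ are the images of the nonzero effective divisors $\tilde\Theta$, $\tilde D$, hence are nonzero effective and therefore ample because $\uprho(\bar X)=1$, and $\bar C$ is not a component of $\bar D$ (a proper transform contains no $\sigma$-exceptional curve). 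In particular $-(K_{\bar X}+\bar C)\equiv\bar\Theta$ is ample, so $(K_{\bar X}+B)\cdot B\le (K_{\bar X}+\bar C)\cdot B<0$ for every component $B$ of $\bar C$; by \eqref{equation-adjunction} and $\Diff_{B}(0)\ge 0$ this forces the geometric genus of $B$ to be $0$, and — normality of the components being part of the classification of surface dlt pairs (see \cite[Chap.~16--17]{Utah}, \cite[\S4.1]{Kollar-Mori-1988}) — every component of $\bar C$ is a smooth rational curve, the components meet transversally at smooth points of $\bar X$, the dual graph of $\bar C$ carries no cycle, and every singular point of $\bar X$ on $\bar C$ is a cyclic quotient point $\tfrac1r(1,q)$ having $\bar C$ as its axis. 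Thus $\bar C$ is a tree of smooth rational curves and $p_a(\bar C)=0$.

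For part~(iii): assume $(X\ni o)$ is not rational. Then by Theorem~\ref{theorem-classification-lc-singularities}\,\ref{Theorem-simple-elliptic-cusp=I=1} and the corollary after it $(X\ni o)$ has index $1$ and is simple elliptic or a cusp, so the dlt modification is the minimal resolution and $\tilde C=\sigma^{-1}(o)$ is a smooth elliptic curve, a cycle of smooth rational curves, or a rational nodal curve; in every case $p_a(\tilde C)=1$. The restriction of $\nu$ to $\tilde C$ is a composition of blow-downs, and at each step the arithmetic genus of the image of $\tilde C$ does not decrease: a step is either an isomorphism near the relevant locus, or contracts a $(-1)$-curve meeting the current image in at least two points (which raises $p_a$), or contracts a $\PP^1$-component of the current image (which leaves $p_a$ unchanged, since it merely identifies the points at which that component met the rest). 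Hence $p_a(\bar C)\ge p_a(\tilde C)=1$, contradicting $p_a(\bar C)=0$. So $(X\ni o)$ is rational, and since it is the only non--log-terminal point of $X$ (Lemma~\ref{lemma-RR-and-one-point}\,\ref{lemma-RR-and-one-point2}) and log terminal singularities are rational, all singularities of $X$ are rational.

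For parts~(i) and~(ii): intersect $K_{\bar X}+\sum m_i\bar C_i+\bar D\sim 0$ with a component and apply adjunction for that component. If $\bar C$ had at least two components, its dual graph being a tree it would have a leaf $B$ meeting the rest of $\bar C$ in a single node; with $B'$ the component meeting $B$ and $B\cdot B'=1$ we get
\[
(m_B-1)B^2=2-\deg\Diff_{B}(0)-m_{B'}-\bar D\cdot B\le 2-0-2-0=0,
\]
so $B^2\le 0$, contradicting the fact that on a projective surface with $\uprho=1$ every curve has positive self-intersection. Hence $\bar C=\bar C_1$ is an irreducible smooth rational curve. Now intersecting $K_{\bar X}+m_1\bar C+\bar D\sim 0$ with $\bar C$ gives
\[
(m_1-1)\bar C^2=2-\deg\Diff_{\bar C}(0)-\bar D\cdot\bar C,
\]
with $m_1\ge 2$, $\bar C^2>0$, $\bar D\cdot\bar C>0$, so $\deg\Diff_{\bar C}(0)<2$; as $\Diff_{\bar C}(0)=\sum(1-1/r)P$ over the singular points $P$ of $\bar X$ on $\bar C$, each coefficient being $\ge 1/2$, there are at most three such points, and if exactly three they are all of type $\rA_1$. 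In that case $\bar D\cdot\bar C$ lies in $\tfrac12\ZZ$ (its only non-integral local contributions come from those $\rA_1$-points), so $\bar D\cdot\bar C\ge 1/2$, and then the displayed identity reads $(m_1-1)\bar C^2=1/2-\bar D\cdot\bar C\le 0$, contradicting $(m_1-1)\bar C^2\ge\bar C^2>0$. Hence $\bar X$ has at most two singular points on $\bar C$.

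The step that carries the most weight is the first one: deriving from dltness of $(\bar X,\bar C)$, together with $\uprho(\bar X)=1$ and the two linear-equivalence relations, the full local picture of $\bar C$ (normal components, no cyclic subgraph, axes of cyclic quotient points); once this is in place, (i)--(iii) are short numerical arguments. The genuinely delicate points are the genus-monotonicity used in~(iii) — the crude adjunction bound does not by itself exclude a short reducible $\bar C$ or three $\rA_1$-points on $\bar C$, so one must bring in \eqref{equation-I=6-tildeX} and the constraint $\uprho(\bar X)=1$ (no negative curves, bounded denominators of intersection numbers) — and it is precisely these extra inputs that finish parts~(i) and~(ii).
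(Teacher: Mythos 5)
Your overall strategy (adjunction along components of $\bar C$ combined with the relation $K_{\bar X}+\sum m_i\bar C_i+\bar D\sim 0$, $m_i\ge 2$, and $\uprho(\bar X)=1$) is workable and close in spirit to the paper's, but two specific steps fail as written.

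The first is the assertion that ``the dual graph of $\bar C$ carries no cycle'' follows from the classification of surface dlt pairs. It does not: a cycle of smooth rational curves meeting transversally at smooth points of $\bar X$ is simple normal crossing, hence dlt. This is not cosmetic, because a cycle is exactly what $\nu_*\tilde C$ can look like when $(X\ni o)$ is a cusp --- the very case part \ref{lemma-barC-irrducible-3} must exclude --- and your proof of \ref{lemma-barC-irrducible-3} rests entirely on $p_a(\bar C)=0$, which you obtain only from the unproved tree structure; the argument is therefore close to circular. The repair is to prove irreducibility first without the tree hypothesis. Your own computation does not actually need a leaf: for any component $B$ with at least one neighbour $B'$ one gets $(m_B-1)B^2\le 2-m_{B'}(B\cdot B')-\bar D\cdot B<0$, contradicting $B^2>0$. (The paper instead bounds $\deg\Diff_{\bar C_1}(\bar D+\bar C')\ge 2$ against the ampleness of $-(K_{\bar X}+\bar C+\bar D)$.) Once $\bar C$ is irreducible, $p_a(\bar C)=0$ is immediate and your genus-monotonicity argument for \ref{lemma-barC-irrducible-3} goes through, in agreement with the paper.

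The second is in part \ref{lemma-barC-irrducible-2}: the claim that three singular points on $\bar C$ with $\deg\Diff_{\bar C}(0)<2$ must all be of type $\rA_1$ is false --- indices $(2,2,3)$ give $\deg\Diff_{\bar C}(0)=5/3<2$. Consequently $\bar D\cdot\bar C$ need not lie in $\tfrac12\ZZ$ and your closing contradiction evaporates. The paper sidesteps this by working with $\Diff_{\bar C}(\bar D)$ rather than $\Diff_{\bar C}(0)$: at any point of $\Supp(\bar D)\cap\bar C$ the coefficient of the different is at least $1$, so $\deg\Diff_{\bar C}(\bar D)\ge 1+(N-1)/2$, while ampleness of $-(K_{\bar X}+\bar C+\bar D)$ forces this degree to be $<2$, whence $N\le 2$. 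Your route could be salvaged, but only with a lower bound $\ge 1/b$ for the local contribution to $\bar D\cdot\bar C$ at a quotient point of index $b$ and a short case analysis, neither of which you supply.
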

\begin{proof}
\ref{lemma-barC-irrducible-1}
Let $\bar C_1\subset\bar C$ be any component meeting $\bar D$
and let $\bar C':=\bar C-\bar C_1$.
Assume that $\bar C'\neq 0$. By~\ref{corollary-dlt} any point 
$\bar P\in\bar C_1\cap\bar C'$ is a smooth point of 
$\bar X$. 
Hence $\Diff_{\bar C_1}(\bar C')$ contains $\bar P$ with positive integral 
coefficient and $\deg\Diff_{\bar C_1}(\bar D+\bar C')\ge 2$ because 
$\Supp (\bar D)\cap\bar C\neq\emptyset$.
On the other hand, $-(K_{\bar X}+\bar C+\bar D)$ is ample
by \eqref{equation-I=6-barX}. Thus contradicts the adjunction formula. Thus $\bar C$ is irreducible.
Again by the adjunction
\begin{equation*}
\deg K_{\bar C}+\deg\Diff_{\bar C}(0)<0.
\end{equation*}
Hence, $p_a(\bar C)=0$.

\ref{lemma-barC-irrducible-2}
Assume that $\bar X$ is singular at $\bar P_1,\dots,\bar P_N\in\bar C$. 
Write 
\begin{equation*}
\Diff_{\bar C}(0)=\sum_{i=1}^N\left(1-\textstyle{\frac 1{b_i}}\right)\bar P_i
\end{equation*}
for some $b_i\ge 2$.
The coefficient of $\Diff_{\bar C}(\bar D)$ at points of the intersection
$\Supp(\bar D)\cap\bar C$ is at least $1$.
Since $\Supp (\bar D)\cap\bar C\neq\emptyset$, we have $N\le 2$.

\ref{lemma-barC-irrducible-3}
If $(X\ni o)$ is a non-rational singularity, then $p_a(\tilde C)=1$ and $\tilde X$ is smooth along $\tilde C$.
Hence $p_a(\bar C)\ge1$. This contradicts~\ref{lemma-barC-irrducible-1}.
\end{proof}

\begin{lemma}
\label{lemma-contractions-K1}
Let $\varphi:S\to S'$ be a birational Mori contraction of surfaces with log terminal singularities
and let $E\subset S$ be the exceptional divisor.
Then $-K_S\cdot E\le 1$ and the equality holds if and only if the singularities of $S$ 
along $E$ are at worst Du Val.
\end{lemma}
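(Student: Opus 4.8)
The plan is to transfer the computation to the minimal resolution of $S$, where the contracted curve has arithmetic genus zero and the statement reduces to adjunction on a smooth surface. Since $\varphi$ is a birational Mori contraction, the exceptional locus $E$ is an irreducible curve contracted to a point $\bar P:=\varphi(E)$, with $-K_S\cdot E>0$. First I would record that $(S'\ni\bar P)$ is a rational singularity: $-K_S$ is $\varphi$-ample and $S$ is klt, so relative Kawamata--Viehweg vanishing gives $R^1\varphi_*\OOO_S=0$, and together with the rationality of the log terminal singularities of $S$ this shows $S'$ has only rational singularities near $\bar P$.

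Next, let $g:\hat S\to S$ be the minimal resolution and $\hat E:=g^{-1}_*E$ the proper transform of $E$. Since $\hat S\to S'$ is a resolution of the rational singularity $\bar P$, every component of its exceptional divisor over $\bar P$ is a smooth rational curve and their intersection form is negative definite; in particular $\hat E\simeq\PP^1$ and $\hat E^2\le -1$. Write $K_{\hat S}=g^*K_S-\Delta$ with $\Delta\ge 0$ effective, supported exactly on the $g$-exceptional curves lying over the non-Du Val points of $S$ (recall that the canonical surface singularities are precisely the Du Val ones). By the projection formula and adjunction on the smooth surface $\hat S$,
\[
-K_S\cdot E=-g^*K_S\cdot\hat E=-(K_{\hat S}+\Delta)\cdot\hat E=(\hat E^2+2)-\Delta\cdot\hat E .
\]
As $\hat E^2+2$ is an integer $\le 1$ and $\Delta\cdot\hat E\ge 0$, this already gives $-K_S\cdot E\le 1$.

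For the equality clause I would read off the two non-negative defects in the displayed formula: $-K_S\cdot E=1$ forces $\hat E^2=-1$ and $\Delta\cdot\hat E=0$. Now $\Delta\cdot\hat E=0$ holds if and only if $\hat E$ is disjoint from $\Supp\Delta$, i.e. if and only if $E$ passes through no non-Du Val point of $S$, i.e. if and only if the singularities of $S$ along $E$ are at worst Du Val. Conversely, if the singularities of $S$ along $E$ are at worst Du Val then $\Delta\cdot\hat E=0$, so $-K_S\cdot E=\hat E^2+2$ is a positive integer $\le 1$ and hence equals $1$. This yields the asserted equivalence.

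The one non-formal ingredient is the assertion $\hat E\simeq\PP^1$, which is exactly what makes adjunction read $K_{\hat S}\cdot\hat E=-\hat E^2-2$; this rests on the rationality of $(S'\ni\bar P)$, equivalently on $R^1\varphi_*\OOO_S=0$ coming from the $\varphi$-ampleness of $-K_S$ (alternatively, one may simply quote that a $K$-negative birational extremal contraction of a log terminal surface contracts a smooth rational curve). Everything else is elementary bookkeeping with intersection numbers on $\hat S$, so I expect this rationality input to be the only real point.
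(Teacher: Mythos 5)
Your argument is correct and is essentially the paper's proof: both pass to the minimal resolution $\psi\colon S^{\min}\to S$, write $K_{S^{\min}}=\psi^*K_S-\Delta$, and read off $-K_S\cdot E\le 1$ from the intersection of $K_{S^{\min}}$ and $\Delta$ with the proper transform of $E$. The only (harmless) difference is that the paper gets $K_{S^{\min}}\cdot\tilde E=-1$ directly from the fact that $K_{S^{\min}}$ is $\psi$-nef but not nef over $S'$, whereas you first establish $\hat E\simeq\PP^1$ via rationality of $(S'\ni\bar P)$ and recover $\hat E^2=-1$ only in the equality case; both routes close the same way.
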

\begin{proof} 
Let $\psi:S^{\min}\to S$ be the minimal resolution and let $\tilde E\subset S^{\min}$ 
be the proper transform of $E$.
Write $K_{S^{\min}}=\psi^*K_S-\Delta$. 
Since $K_{S^{\min}}\cdot\psi^* E<0$, the divisor $K_{S^{\min}}$ is not nef over $Z$.
Hence, $K_{S^{\min}}\cdot\tilde E=-1$ and so
$-K_S\cdot E+\tilde E\cdot\Delta=1$. 
\end{proof}

\begin{lemma}
\label{lemma-E}
Let $\nu':\tilde X\to X'$ be the first extremal contraction in $\nu$
and let $\tilde E$ be its exceptional divisor.
Then $\tilde E\not\subset\tilde C$. Moreover,
$\tilde E\cap\tilde C$ is a singular point of $\tilde X$
and smooth point of $\tilde C$.
\end{lemma}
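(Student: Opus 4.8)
The plan is to analyze the first extremal contraction $\nu':\tilde X\to X'$ using the structure we have already established, namely that $(\bar X,\bar C)$ is dlt (Corollary~\ref{corollary-dlt}), that $\bar C$ is an irreducible smooth rational curve with at most two singular points of $\bar X$ on it (Lemma~\ref{lemma-barC-irrducible}), and that the singularities of $X$ are rational, so that $\tilde C$ is a chain (or a single curve) of smooth rational curves meeting transversally and $\tilde X$ has the dlt singularities described in Corollary~\ref{proposition-classification-lc-singularities}. Since $\nu$ is the composite of the $K_{\tilde X}$-MMP, the contraction $\nu'$ is a $K_{\tilde X}$-negative extremal contraction of a surface with log terminal singularities; as it is birational it contracts a single irreducible curve $\tilde E$.

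First I would show $\tilde E\not\subset\tilde C$. Every component $\tilde C_i$ of $\tilde C$ satisfies $K_{\tilde X}\cdot\tilde C_i\ge 0$: indeed $K_{\tilde X}$ is $\sigma$-nef by Construction~\ref{construction-main} and $\tilde C_i$ is $\sigma$-exceptional, so $K_{\tilde X}\cdot\tilde C_i\ge 0$. A $K_{\tilde X}$-negative extremal ray cannot be generated by such a curve, hence $\tilde E\ne\tilde C_i$ for any $i$; since $\tilde E$ is irreducible, $\tilde E\not\subset\tilde C$.

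Next I would prove that $\tilde E$ meets $\tilde C$ and that the intersection point lies in the singular locus of $\tilde X$. That $\tilde E\cap\tilde C\ne\emptyset$ follows from Lemma~\ref{lemma-fiber-meets-C} applied to the point $\bar P'=\nu'(\tilde E)$ — more directly, since $\uprho(X)=1$ and $\tilde C$ is the whole $\sigma$-exceptional divisor, $\tilde E$ cannot be disjoint from $\tilde C$ (otherwise $X'\to\cdots\to\bar X$ would still dominate $X$ with the same Picard number, forcing $\tilde E$ to be $\sigma$-exceptional, contradiction). For the singularity claim, apply Lemma~\ref{lemma-fiber-meets-C-1}: since $\nu$ is not an isomorphism over the image point $\bar P$ of $\tilde E$, the pair $(\bar X,\bar C)$ is plt at $\bar P$, hence $\bar C$ is smooth there, and by Lemma~\ref{lemma-contractions-K1} the contraction $\nu'$ has $-K_{\tilde X}\cdot\tilde E\le 1$. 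If $\tilde E\cap\tilde C$ were a smooth point of $\tilde X$, then $\tilde E$ would meet $\tilde C$ transversally at a smooth point, and contracting $\tilde E$ would not create a worse singularity on the image of $\tilde C$; I would then use the adjunction/different computation on $\tilde C_1$ (the component of $\tilde C$ met by $\tilde D$) together with the fact that $-(K_{\bar X}+\bar C+\bar D)$ is ample, exactly as in the proof of Lemma~\ref{lemma-barC-irrducible}, to derive that the coefficient of the different on $\tilde C_1$ after contraction forces the degree of $\Diff$ to be $\ge 2$, contradicting ampleness. Finally, that $\tilde E\cap\tilde C$ is a \emph{smooth} point of $\tilde C$: $\tilde C$ is a chain of smooth rational curves and its only non-smooth points as a divisor are the nodes $\tilde C_i\cap\tilde C_{i+1}$, which are smooth points of $\tilde X$; since we just showed $\tilde E\cap\tilde C$ is a singular point of $\tilde X$, it cannot be such a node, hence $\tilde E$ meets only one component of $\tilde C$ and does so at a smooth point of that component.

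The main obstacle I anticipate is the middle step: ruling out the possibility that $\tilde E$ meets $\tilde C$ at a smooth point of the surface. Here one must combine the numerical constraint $-K_{\tilde X}\cdot\tilde E\le 1$ from Lemma~\ref{lemma-contractions-K1} with the behavior of the different under the contraction $\nu'$, keeping careful track of which component of $\tilde C$ carries $\bar D$ and whether $\tilde E$ meets $\tilde C_1$ or some vertical-type $\tilde C_i$; the bookkeeping is delicate because the two singular points of $\bar X$ on $\bar C$ (from Lemma~\ref{lemma-barC-irrducible}) already account for the "budget" in the adjunction formula $\deg K_{\bar C}+\deg\Diff_{\bar C}(\bar D)<0$, so any additional contribution from a new intersection point produces the contradiction. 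Once that is in place, the remaining assertions are immediate from the chain structure of $\tilde C$ and the classification in Corollary~\ref{proposition-classification-lc-singularities}.
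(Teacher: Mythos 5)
The first two assertions are handled correctly and just as in the paper: $\tilde E\not\subset\tilde C$ because every component of $\tilde C$ is $K_{\tilde X}$-non-negative ($K_{\tilde X}$ is $\sigma$-nef) while $\tilde E$ spans a $K_{\tilde X}$-negative ray, and $\tilde E\cap\tilde C\neq\emptyset$ because $\uprho(X)=1$. The final assertion (that the intersection point is a smooth point of $\tilde C$, given that it is a singular point of $\tilde X$) also goes through, since $(\tilde X,\tilde C)$ is dlt and the nodes of the chain $\tilde C$ lie in the smooth locus.

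The gap is in the middle step, which you yourself flag as the main obstacle: showing that $\tilde E\cap\tilde C$ is a \emph{singular} point of $\tilde X$. Your proposed route --- ``contracting $\tilde E$ would not create a worse singularity on the image of $\tilde C$,'' followed by a different/adjunction computation on $\bar C$ as in Lemma~\xref{lemma-barC-irrducible} --- does not close. If $\tilde E$ were a $(-1)$-curve contained in the smooth locus of $\tilde X$ and meeting $\tilde C$ transversally at a smooth point, then contracting it produces no new singular point of $X'$ on the image of $\tilde C$ and hence contributes nothing to $\Diff_{\bar C}$; the inequality $\deg\Diff_{\bar C}(\bar D)<2$ is never violated and no contradiction appears. (Also note that at the stage of Lemma~\xref{lemma-E} you only know there are \emph{at most} two singular points of $\bar X$ on $\bar C$, so the ``budget'' is not yet exhausted.) The missing ingredient is the boundary $\tilde\Theta$: since $\sigma(\tilde E)$ is a curve on $X$ and $\Theta$ is a positive multiple of a general very ample divisor with $\Supp(\tilde\Theta)\cap\tilde C=\emptyset$, one has $\tilde\Theta\cdot\tilde E>0$. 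Combining this with $K_{\tilde X}+\tilde C+\tilde\Theta\equiv 0$ and the bound $-K_{\tilde X}\cdot\tilde E\le 1$ of Lemma~\xref{lemma-contractions-K1} gives
\begin{equation*}
\tilde C\cdot\tilde E=-K_{\tilde X}\cdot\tilde E-\tilde\Theta\cdot\tilde E<1,
\end{equation*}
and since the intersection is nonempty (and consists of a single point because $\bar C$ is a smooth rational curve), the local intersection number being $<1$ forces that point to be singular on $\tilde X$. You quote Lemma~\xref{lemma-contractions-K1} but never bring $\tilde\Theta$ into the computation, and that strict positivity is exactly what makes the argument work.
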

\begin{proof}
Since $\uprho(X)=1$, \ $\tilde E\cap \tilde C\neq\emptyset$.
Since $K_{\tilde X}$ is $\sigma $-nef, $\tilde E\not\subset\tilde C$.
Since $\bar C$ is a smooth rational curve, $\tilde E$ meets $\tilde C$ at a single point,
say $\tilde P$. Further, $\sigma (\tilde E)$ meets $\Supp(\Theta)$ outside $o$. Hence, $\tilde\Theta\cdot\tilde E>0$.
By Lemma~\ref{lemma-contractions-K1}\ $K_{\tilde X}\cdot\tilde E\ge -1$. Since
$K_{\tilde X}+\tilde C+\tilde\Theta\equiv 0$,
we have $\tilde C\cdot\tilde E<1$.
Hence $\tilde C\cap\tilde E$ is a singular point of $\tilde X$.
Since $(\tilde X,\tilde C)$ is dlt, $\tilde C\cap\tilde E$ is a smooth point of $\tilde C$
(see e.g. \cite[16.6]{Utah}).
\end{proof}

\begin{proposition}
\label{proposition-tilde-C-irreducible}
$\uprho(\tilde X)=2$ and $\tilde C$ is irreducible.
Moreover, $\bar X$ has exactly two singular points on $\bar C$ and $I>2$.
\end{proposition}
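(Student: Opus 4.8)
By Lemma~\ref{lemma-barC-irrducible}\ref{lemma-barC-irrducible-3} the singularities of $X$ are rational, so $(X\ni o)$ is a rational strictly log canonical singularity and hence of index $I\ge 2$. By Corollary~\ref{proposition-classification-lc-singularities}, if $I\ge 3$ then $\tilde C$ is a single smooth rational curve on which $\tilde X$ has exactly three cyclic quotient singularities, whereas if $I=2$ then $\tilde C$ is a chain $\tilde C=\tilde C_1+\dots+\tilde C_s$ and the singular points of $\tilde X$ lying on $\tilde C$ are exactly four, all of type $\rA_1$, none of them a node of the chain, two lying on $\tilde C_1$ and two on $\tilde C_s$ (all four on $\tilde C_1$ when $s=1$). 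The plan is to prove $I>2$ first; the other three assertions then follow with little effort.

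\textbf{Deducing the statement from $I>2$.}
Assume $I\ge 3$. Then $\tilde C$ is irreducible, so $\uprho(\tilde X)=\uprho(X)+1=2$ and $\nu:\tilde X\to\bar X$ is a single divisorial Mori contraction with irreducible exceptional curve $\tilde E$. By Lemma~\ref{lemma-E}, $\tilde E\not\subset\tilde C$ and $\tilde E\cap\tilde C$ is a single point $\tilde P$ which is a singular point of $\tilde X$, hence one of the three cyclic quotient singularities of $\tilde X$ lying on $\tilde C$. Since $\nu$ is an isomorphism away from $\tilde E$, the other two of these singularities map isomorphically onto two distinct singular points of $\bar X$ lying on $\bar C$; together with Lemma~\ref{lemma-barC-irrducible}\ref{lemma-barC-irrducible-2} this gives that $\bar X$ has exactly two singular points on $\bar C$.

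\textbf{Excluding $I=2$.}
Suppose $I=2$. By Lemma~\ref{lemma-E} the first step $\nu_1:\tilde X\to X'$ of the $K_{\tilde X}$-MMP computing $\nu$ contracts a curve $\tilde E\not\subset\tilde C$ meeting $\tilde C$ in a single point, which is one of the four $\rA_1$-points of $\tilde X$ and lies on an end component of the chain, say on $\tilde C_1$ after relabelling; by Lemma~\ref{lemma-contractions-K1} one has $-K_{\tilde X}\cdot\tilde E=1$, and a short computation on the minimal resolution determines the coefficient of $\tilde E$ in $K_{\tilde X}-\nu_1^*K_{X'}$. If $s=1$ then $\nu=\nu_1$, and the three $\rA_1$-points other than $\tilde E\cap\tilde C$, being disjoint from $\tilde E$, give three distinct singular points of $\bar X$ on $\bar C$, contradicting Lemma~\ref{lemma-barC-irrducible}\ref{lemma-barC-irrducible-2}. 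If $s\ge 2$, one uses that every component satisfies $K_{\tilde X}\cdot\tilde C_i=-\tilde C\cdot\tilde C_i\ge 0$, so that the remaining contractions $\nu_2,\dots,\nu_s$, which all contract components of $\tilde C$ since exactly one survives as $\bar C$, are forced to peel the chain off starting from the $\tilde E$-end; tracing the self-intersections and singularity types of the images of $\tilde E$ and of the $\tilde C_i$ through this sequence, and using the sharp form of Lemma~\ref{lemma-contractions-K1} at each step, one finds in every admissible case either that the MMP cannot be completed, or that $\bar X$ ends up with more than two singular points on $\bar C$, or that the resulting relation contradicts $m_i\ge 2$ in \eqref{equation-I=6-barX}. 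In all cases we obtain a contradiction, so $I>2$.

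\textbf{The main obstacle.}
With $I\in\{3,4,6\}$ the statement follows from the second paragraph. The real work is the $s\ge 2$ analysis of the third paragraph: one must keep exact track, through the successive Mori contractions $\nu_1,\dots,\nu_s$, of how the four $\rA_1$-singularities of $\tilde X$ and the intermediate collapse points evolve (which requires the coefficient computed from Lemma~\ref{lemma-contractions-K1}, the inequalities $K_{\tilde X}\cdot\tilde C_i\ge 0$, and the explicit weights $n_i$ of the chain), so as to rule out each combinatorial possibility. The remaining parts are routine.
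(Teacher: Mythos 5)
Your reduction to excluding $I=2$, and your treatment of the cases $I\ge 3$ and $I=2$, $s=1$ (counting the surviving singular points against Lemma~\ref{lemma-barC-irrducible}\ref{lemma-barC-irrducible-2}), agree with the paper. The gap is in the case $I=2$, $s\ge 2$, which is exactly where the real content lies and where your sketch points at the wrong contradiction. The peeling you describe does happen ($\tilde E$ is contracted first, then the chain is contracted from the $\tilde E$-end, since $K_{\tilde X}\cdot\tilde C_i\ge 0$ initially and only a component meeting the previously contracted curve can become $K$-negative), but none of the three outcomes you list occurs: the MMP completes without obstruction, the surviving component $\tilde C_s$ carries exactly its two $\rA_1$-points, so $\bar X$ has exactly two (not more than two) singular points on $\bar C$, and the adjunction relation yields $m=2$, which is perfectly consistent with $m_i\ge 2$. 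So "tracing singularity types through the contractions" terminates in a configuration that looks admissible, and your proof stalls.

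The paper's actual contradiction is numerical. Since the only singularities of $\bar X$ on $\bar C$ are two $\rA_1$-points, $2\bar C$ is Cartier near $\bar C$ and $\deg\Diff_{\bar C}(0)=1$, so adjunction gives $(K_{\bar X}+\bar C)\cdot\bar C=-1$, i.e. $(m-1)\bar C^2+\bar D\cdot\bar C=1$ with both terms positive half-integers (using $\uprho(\bar X)=1$); hence $m=2$ and $\bar C^2=\bar D\cdot\bar C=1/2$, so $\bar D\equiv\bar C$ and $K_{\bar X}^2=9\bar C^2=9/2$. On the other hand all singularities of $\bar X$ are of type $\rT$ (the two $\rA_1$'s on $\bar C$, and off $\bar C$ the surface is untouched by $\nu$ and inherits the $\rT$-singularities of $X\setminus\{o\}$), so $K_{\bar X}^2\in\ZZ$ — a contradiction. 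Without this (or an equivalent) integrality argument, the case $I=2$, $s\ge2$ is not excluded and the proposition is not proved.
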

\begin{proof}
Assume the converse, i.e. $\tilde C$ is reducible.
By Lemma~\ref{lemma-barC-irrducible} the curve $\bar C$
is irreducible. Let $s$ be the number of components of $\tilde C$. 
So, $\uprho(\tilde X)=s+1$. Hence $\nu$ contracts $s-1$ components of $\tilde C$ 
and exactly one divisor, say $\tilde E$ such that $\tilde E\not\subset\tilde C$.
By Lemma~\ref{lemma-E} the curve $\tilde E$ is contracted on the first step. Note that $\tilde C$ is a chain 
$\tilde C_1+\cdots+\tilde C_s$, where both $\tilde C_1$ and $\tilde C_s$ contain
two points of type $\rA_1$ and the middle curves $\tilde C_2$,\dots, $\tilde C_{s-1}$ 
are contained in the smooth locus. By Lemma~\ref{lemma-E}
we may assume that $\tilde E$ meets $\tilde C_1$.
Then $\nu$ contracts $\tilde C_1$,\dots, $\tilde C_{s-1}$.
However $\tilde C_s$ contains two points of type $\rA_1$ 
and it is not contracted. Thus $\bar X$ has two singular points of type $\rA_1$ 
on $\bar C$. Again by Lemma~\ref{lemma-barC-irrducible} the surface
$\bar X$ has no other singular points on $\bar C$.
In particular, $2\bar C$ is Cartier, $\bar X$ has only singularities of type $\rT$,
and $K_{\bar X}^2$ is an integer.
On the other hand, we have $-K_{\bar X}=m\bar C+\bar D$, $m\ge 2$. 
By the adjunction formula
\begin{equation*}
-1=\deg(K_{\bar C}+\Diff_{\bar C}(0))=(K_{\bar X}+\bar C)\cdot\bar C=
-\bar D\cdot\bar C-(m-1)\bar C^2.
\end{equation*}
This gives us $\bar D\cdot\bar C=\bar C^2=1/2$, $m=2$, 
and $K_{\bar X}^2=9/2$, a contradiction.

Finally, by Lemmas~\ref{lemma-barC-irrducible} and~\ref{lemma-E} the surface 
$\tilde X$ (resp. $\bar X$) has exactly three (resp. two) singular points on $\tilde C$.
\end{proof}

By Theorem~\ref{Theorem-Q-smoothings} the surface $\bar X$ has at least one non-Du Val singularity
lying on $\bar C$. Thus 
Theorem~\ref{theorem-main} is implied by the following.
\begin{proposition}
\label{lemma-DuVal-singularities-C}
$\bar X$ has only Du Val singularities on $\bar C$. 
\end{proposition}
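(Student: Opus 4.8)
### Proof proposal for Proposition~\ref{lemma-DuVal-singularities-C}

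The plan is to argue by contradiction: assume $\bar X$ has a non-Du Val singularity on $\bar C$, derive strong numerical constraints on $\bar C$ and $\bar D$ via adjunction, and then show these contradict the Mori-fiber-space structure $\bar X\to \{pt\}$ together with the classification of $\rT$-singularities. By Proposition~\ref{proposition-tilde-C-irreducible} we know $\bar C$ is an irreducible smooth rational curve, $\bar X$ has exactly two singular points $\bar P_1, \bar P_2$ on $\bar C$, and $I>2$; moreover, by Theorem~\ref{Theorem-Q-smoothings} applied to the non-Du Val point, $(o\in X)$ is of type $[n;[r_1],[r_2],[r_3]]$, so the three singular points of $\tilde X$ on $\tilde C$ are cyclic quotients of types $\frac1{r_i}(1,1)$ with $\{r_i\}$ one of $(3,3,3)$, $(2,4,4)$, $(2,3,6)$, and $\nu$ contracts exactly one of the three components of $\tilde C$ together with one extra divisor $\tilde E$.

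First I would pin down the two singularities $\bar P_1,\bar P_2\in\bar X$ on $\bar C$. Since $\nu$ contracts one component $\tilde C_j$ of the star-shaped $\tilde C$, and the $\tilde X$-singularities along $\tilde C$ sit at the three endpoints of the branches meeting $\tilde C_0$ (the central curve), I would check which two of the three $\frac1{r_i}(1,1)$-points survive on $\bar X$ and compute the local data $\Diff_{\bar C}(0) = (1-\tfrac1{b_1})\bar P_1 + (1-\tfrac1{b_2})\bar P_2$, where $b_i$ is determined by the surviving cyclic quotient structure (using that $(\tilde X,\tilde C)$ is dlt so $\tilde C$ passes through the ends of the chains, and Lemma~\ref{lemma-E} tells us $\tilde E$ meets $\tilde C$ at a singular point of $\tilde X$ that is a smooth point of $\tilde C$). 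Then adjunction on $\bar C$ gives
\[
-2 + \left(1-\tfrac1{b_1}\right) + \left(1-\tfrac1{b_2}\right) + \deg\Diff_{\bar C}(\bar D) = (K_{\bar X}+\bar C+\bar D)\cdot\bar C = -2\,\bar C^2 - \bar D\cdot\bar C,
\]
using $-K_{\bar X}\sim m\bar C + \bar D$ with $m\ge 2$ from \eqref{equation-I=6-barX}. Since $\Supp(\bar D)\cap\bar C\neq\emptyset$ (because $\dim|\bar D|>0$ and $\uprho(\bar X)=1$), the contribution $\deg\Diff_{\bar C}(\bar D)\ge 1$, and this forces $m=2$, together with exact values of $\bar C^2$ and $\bar D\cdot\bar C$ of the form $1/b$ for small $b$.

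Next I would extract a contradiction from the global geometry. Knowing $m=2$ and $-K_{\bar X}\sim 2\bar C+\bar D$, I would contract $\bar C$ by a further Mori contraction (possible since $\bar C^2<0$ and $(K_{\bar X}+\bar C)\cdot\bar C$ was computed above, making $\bar C$ a $K_{\bar X}$-negative or $K$-trivial extremal curve) and land on a del Pezzo surface $X'$ of Picard number one — but the resulting singularity structure, read off from the $\frac1{r_i}(1,1)$-points and $\bar C^2$, would have to be compatible with the classification of $\rT$-singularities (Theorem~\ref{classification-T-singularities}) at the non-Du Val points, which it is not: a point of type $\frac1{r}(1,1)$ with $r\ge 3$ is never a $\rT$-singularity. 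Alternatively — and this is probably the cleaner route — I would compute $K_{\bar X}^2$ directly from $-K_{\bar X}\sim 2\bar C+\bar D$ by expanding $K_{\bar X}^2 = 4\bar C^2 + 4\bar C\cdot\bar D + \bar D^2$, use $\bar C\cdot(K_{\bar X}+\bar C+\bar D)=-1$ to eliminate one term, and also use $\bar D\cdot(K_{\bar X}+\bar D+\text{...})$ or the fact that $|\bar D|$ is a pencil (so $\bar D^2\ge 0$, and in fact $\bar D$ moves) to get that $K_{\bar X}^2$ is a specific non-integer or a value incompatible with $\bar X$ being a del Pezzo surface of Picard number one whose singularities satisfy $\K^2\in\ZZ$ (Corollary~\ref{lemma-RR}).

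The main obstacle I anticipate is the bookkeeping in the middle step: correctly determining which singular points survive on $\bar X$ after the run of the MMP $\nu$, and hence the exact coefficients $b_i$ in $\Diff_{\bar C}(0)$ and the value of $\bar C^2$. This depends on the combinatorics of how $\tilde E$ sits relative to the star-shaped $\tilde C$ and which branch gets contracted — there may be several sub-cases according to whether $\{r_i\}$ is $(3,3,3)$, $(2,4,4)$, or $(2,3,6)$ and which $r_i$ corresponds to the contracted direction. Once the local data is fixed, the adjunction computation forcing $m=2$ and the subsequent numerical contradiction (via $K_{\bar X}^2\notin\ZZ$ or violation of the $\rT$-singularity condition) should be short. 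I would organize the argument so that the adjunction identity on $\bar C$ is written once in general form, and then the finitely many numerical possibilities are eliminated uniformly.
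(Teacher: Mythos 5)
Your opening moves are in the right direction --- $\bar C$ irreducible with exactly two singular points of types $\frac{1}{n_i}(1,1)$, followed by an adjunction computation on $\bar C$ --- but both of your proposed endgames break down. Route (a) rests on the premise that $\bar C^2<0$ so that $\bar C$ can be contracted by a further Mori contraction. This is false in the present situation: here $\bar X$ is the \emph{output} of the MMP, a del Pezzo surface with $\uprho(\bar X)=1$, so every curve on it, in particular $\bar C$, has positive self-intersection. (You seem to be importing the picture of Sect.~\ref{section-fibrations}, where $\uprho(\bar X)=2$ and $\bar C$ is a contractible section.) Route (b) needs either $K_{\bar X}^2\in\ZZ$ or the $\rT$-property for the singularities of $\bar X$, but neither is available: $\bar X$ is not assumed to admit a $\QQ$-Gorenstein smoothing, and under your contradiction hypothesis its singularities $\frac 1n(1,1)$ with $n\ge 3$ are precisely \emph{not} of type $\rT$, so Theorem~\ref{classification-T-singularities} and Corollary~\ref{lemma-RR} cannot be applied to $\bar X$. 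The case bookkeeping you flag as the main obstacle is in fact not needed at all.

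The missing idea is purely local and makes the argument two lines long. Since both points of $\bar C\cap\Sing(\bar X)$ are of type $\frac 1n(1,1)$, the divisor $K_{\bar X}+2\bar C$ is Cartier near $\bar C$ (locally $K\sim -2L$ and $\bar C\sim L$ in the class group $\ZZ/n$). Hence $H:=-(K_{\bar X}+2\bar C)=(m-2)\bar C+\bar D$ is an effective divisor, Cartier near $\bar C$ and meeting $\bar C$ because $\uprho(\bar X)=1$; therefore $H\cdot\bar C$ is a positive integer, so $H\cdot\bar C\ge 1$. Adjunction then gives
\begin{equation*}
\deg\Diff_{\bar C}(0)=2-H\cdot\bar C-\bar C^2<2-H\cdot\bar C\le 1,
\end{equation*}
using $\bar C^2>0$, whereas two singular points, at least one of them non-Du Val, contribute at least $\frac12+\frac23>1$ to $\deg\Diff_{\bar C}(0)$. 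This yields the contradiction without determining $m$, without identifying which of the three points of $\tilde X$ survive, and without any computation of $K_{\bar X}^2$.
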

\begin{proof}
Assume that the singularities of $\bar X$ 
at points lying on $\bar C$ are of types $\frac 1{n_1} (1,1)$
and $\frac 1{n_2} (1,1)$ with $n_1\ge n_2$ and $n_1>2$.
In this case near $\bar C$ the divisor $H:=-(K_{\bar X}+2\bar C)$ is Cartier.
By the adjunction formula
\begin{equation*}
K_{\bar C}+\Diff_{\bar C}(0)=(K_{\bar X}+\bar C)|_{\bar C}
=-(H+\bar C)|_{\bar C}
\end{equation*}
Hence, 
\begin{equation*}
\deg\Diff_{\bar C}(0)< 2-H\cdot\bar C\le 1.
\end{equation*}
In particular, $\bar X$ has at most one singular point on $\bar C$,
a contradiction. 
\end{proof}


\begin{thebibliography}{Wah81}

\bibitem[Art66]{Artin-1966}
Michael Artin.
\newblock On isolated rational singularities of surfaces.
\newblock {\em Amer. J. Math.}, 88:129--136, 1966.

\bibitem[Bla95]{Blache1995}
R.~Blache.
\newblock Riemann-{R}och theorem for normal surfaces and applications.
\newblock {\em Abh. Math. Semin. Univ. Hamb.}, 65:307--340, 1995.

\bibitem[Bri68]{Brieskorn-1967-1968}
Egbert Brieskorn.
\newblock Rationale {S}ingularit\"aten komplexer {F}l\"achen.
\newblock {\em Invent. Math.}, 4:336--358, 1967/1968.

\bibitem[{Fuj}95]{Fujisawa1995}
Taro {Fujisawa}.
\newblock {On non-rational numerical Del Pezzo surfaces.}
\newblock {\em {Osaka J. Math.}}, 32(3):613--636, 1995.

\bibitem[dv92]{deJong-vanStraten-1992}
Theo {de Jong} and Duco {van Straten}.
\newblock A construction of {$Q$-Gorenstein} smoothings of index two.
\newblock {\em Int. J. Math.}, 3(3):341--347, 1992.

\bibitem[GS83]{Greuel-Steenbrink-1983}
Gert-Martin Greuel and Joseph~and Steenbrink.
\newblock On the topology of smoothable singularities.
\newblock In {\em Singularities, {P}art 1 ({A}rcata, {C}alif., 1981)},
  volume~40 of {\em Proc. Sympos. Pure Math.}, pages 535--545. Amer. Math.
  Soc., Providence, R.I., 1983.

\bibitem[Hac04]{Hacking2004}
Paul Hacking.
\newblock Compact moduli of plane curves.
\newblock {\em Duke Math. J.}, 124(2):213--257, 2004.

\bibitem[HP10]{Hacking-Prokhorov-2010}
Paul Hacking and Yuri Prokhorov.
\newblock Smoothable del {P}ezzo surfaces with quotient singularities.
\newblock {\em Compositio Math.}, 146(1):169--192, 2010.

\bibitem[HT87]{Hayakawa-Takeuchi-1987}
Takayuki Hayakawa and Kiyohiko Takeuchi.
\newblock On canonical singularities of dimension three.
\newblock {\em Japan. J. Math. (N.S.)}, 13(1):1--46, 1987.

\bibitem[Kaw88]{Kawamata-1988-crep}
Yujiro Kawamata.
\newblock Crepant blowing-up of {$3$}-dimensional canonical singularities and
  its application to degenerations of surfaces.
\newblock {\em Ann. of Math. (2)}, 127(1):93--163, 1988.

\bibitem[Kaw07]{Kawakita2007}
Masayuki Kawakita.
\newblock Inversion of adjunction on log canonicity.
\newblock {\em Invent. Math.}, 167:129--133, 2007.

\bibitem[Kaw15]{Kawakita-index}
Masayuki Kawakita.
\newblock The index of a threefold canonical singularity.
\newblock {\em Amer. J. Math.}, 137(1):271--280, 2015.

\bibitem[KM92]{Kollar-Mori-1992}
J{\'a}nos Koll{\'a}r and Shigefumi Mori.
\newblock Classification of three-dimensional flips.
\newblock {\em J. Amer. Math. Soc.}, 5(3):533--703, 1992.

\bibitem[KM98]{Kollar-Mori-1988}
J{\'a}nos Koll{\'a}r and Shigefumi Mori.
\newblock {\em Birational geometry of algebraic varieties}, volume 134 of {\em
  Cambridge Tracts in Mathematics}.
\newblock Cambridge University Press, Cambridge, 1998.
\newblock With the collaboration of C. H. Clemens and A. Corti, Translated from
  the 1998 Japanese original.

\bibitem[Kol91]{Kollar1991a}
J{\'a}nos Koll{\'a}r.
\newblock Flips, flops, minimal models, etc.
\newblock In {\em Surveys in differential geometry ({C}ambridge, {MA}, 1990)},
  pages 113--199. Lehigh Univ., Bethlehem, PA, 1991.

\bibitem[Kol92]{Utah}
J{\'a}nos Koll{\'a}r, editor.
\newblock {\em Flips and abundance for algebraic threefolds}.
\newblock Soci\'et\'e Math\'ematique de France, Paris, 1992.
\newblock Papers from the Second Summer Seminar on Algebraic Geometry held at
  the University of Utah, Salt Lake City, Utah, August 1991, Ast\'erisque No.
  211 (1992).

\bibitem[KSB88]{Kollar-ShB-1988}
J.~Koll{\'a}r and N.~I. Shepherd-Barron.
\newblock Threefolds and deformations of surface singularities.
\newblock {\em Invent. Math.}, 91(2):299--338, 1988.

\bibitem[Loo86]{Looijenga1986}
Eduard Looijenga.
\newblock Riemann-{R}och and smoothings of singularities.
\newblock {\em Topology}, 25:293--302, 1986.

\bibitem[LW86]{Looijenga-Wahl-1986}
Eduard Looijenga and Jonathan Wahl.
\newblock Quadratic functions and smoothing surface singularities.
\newblock {\em Topology}, 25(3):261--291, 1986.

\bibitem[Man91]{Manetti-1991}
Marco Manetti.
\newblock Normal degenerations of the complex projective plane.
\newblock {\em J. Reine Angew. Math.}, 419:89--118, 1991.

\bibitem[MP09]{Mori-Prokhorov-2008d}
S.~Mori and Yu. Prokhorov.
\newblock Multiple fibers of del {P}ezzo fibrations.
\newblock {\em Proc. Steklov Inst. Math.}, 264(1):131--145, 2009.

\bibitem[Pro01]{Prokhorov-2001}
Yu. Prokhorov.
\newblock {\em Lectures on complements on log surfaces}, volume~10 of {\em MSJ
  Memoirs}.
\newblock Mathematical Society of Japan, Tokyo, 2001.

\bibitem[Pro15]{Prokhorov-degenerations-del-Pezzo}
Yu. Prokhorov.
\newblock A note on degenerations of del {P}ezzo surfaces.
\newblock {\em Annales de l'institut Fourier}, 65(1):369--388, 2015.

\bibitem[Pro16]{Prokhorov-e-QFano7}
Yu. Prokhorov.
\newblock Q-{F}ano threefolds of index 7.
\newblock {\em Proc. Steklov Inst. Math.}, 294:139--153, 2016.

\bibitem[Rei87]{Reid-YPG1987}
Miles Reid.
\newblock Young person's guide to canonical singularities.
\newblock In {\em Algebraic geometry, Bowdoin, 1985 (Brunswick, Maine, 1985)},
  volume~46 of {\em Proc. Sympos. Pure Math.}, pages 345--414. Amer. Math.
  Soc., Providence, RI, 1987.

\bibitem[Sho93]{Shokurov-1992-e-ba}
V.~V. Shokurov.
\newblock 3-fold log flips.
\newblock {\em Russ. Acad. Sci., Izv., Math.}, 40(1):95--202, 1993.

\bibitem[Ste91]{Stevens1991a}
Jan Stevens.
\newblock Partial resolutions of rational quadruple points.
\newblock {\em Int. J. Math.}, 2(2):205--221, 1991.

\bibitem[Wah80]{Wahl1980}
Jonathan~M. Wahl.
\newblock Elliptic deformations of minimally elliptic singularities.
\newblock {\em Math. Ann.}, 253(3):241--262, 1980.

\bibitem[Wah81]{Wahl-1981}
Jonathan Wahl.
\newblock Smoothings of normal surface singularities.
\newblock {\em Topology}, 20(3):219--246, 1981.

\end{thebibliography}

\def\cprime{$'$}

\end{document}